\theoremstyle{plain}
\newtheorem{theorem}{Theorem}
\newtheorem{proposition}{Proposition}[section]
\newtheorem{lemma}[proposition]{Lemma}
\newtheorem{question}[proposition]{Question}
\newtheorem{conjecture}[proposition]{Conjecture}
\newtheorem{corollary}[proposition]{Corollary}
\newtheorem*{rep@theorem}{\rep@title}
\newcommand{\newreptheorem}[2]{%
\newenvironment{rep#1}[1]{%
 \def\rep@title{#2 \ref{##1}}%
 \begin{rep@theorem}}%
 {\end{rep@theorem}}}
\theoremstyle{remark}
\newtheorem{remark}[proposition]{Remark}
\theoremstyle{definition}
\newtheorem{definition}[proposition]{Definition}
\newtheorem{example}[proposition]{Example}
\def\Euc{\mathbb{E}}
\def\Sph{\mathbb{S}}
\def\Lor{\mathbb{L}}
\def\Hyp{\mathbb{H}}
\def\RR{\mathbb{R}}
\def\theBall{\mathcal{B}}
\def\theLightCone{\mathcal{L}}
\def\theSphere{\mathcal{S}}
\def\cC{\mathcal{C}}
\def\cE{\mathcal{E}}
\def\cK{\mathcal{K}}
\def\cP{\mathcal{P}}
\def\cQ{\mathcal{Q}}
\def\cT{\mathcal{T}}
\def\cV{\mathcal{V}}
\def \bx {\mathbf x}
\def \by {\mathbf y}
\DeclareMathOperator{\conv}{conv}
\DeclareMathOperator{\relint}{relint}
\DeclareMathOperator{\Span}{span}
\DeclareMathOperator{\cone}{cone}
\newcommand{\polar}[1]{{#1}^*} %
\newcommand{\ass}[1]{{#1}^\diamond} %
\DeclareMathOperator{\interior}{int}
\DeclareMathOperator{\homog}{hom} %
\newcommand{\set}[2]{\ensuremath{\left\{#1\,\middle|\,#2\right\}}} 
\newcommand{\floor}[1]{\left\lfloor {#1} \right\rfloor} %
\newcommand{\defn}[1]{\emph{\color{blue} #1}} %
\newcommand{\sprod}[2]{\langle {#1} , {#2} \rangle} %
\newcommand{\Lprod}[2]{\sprod{#1}{#2}} %
\newcommand{\Eprod}[2]{( {#1} , {#2} )} %
\newcommand{\cyc}[2]{\cC_{#1}({#2})} %
\title{Scribability problems for polytopes}
\author{Hao Chen}
\address{Freie Universit\"at Berlin, Institut f\"ur Mathematik, Arnimallee 2, 14195 Berlin, Deutschland}
\email{hao.chen.math@gmail.com}
\thanks{H.\ Chen was supported by the ERC Advanced Grant number 247029 ``SDModels''.}
\author{Arnau Padrol}
\address{Sorbonne Universit\'es, Universit\'e Pierre et Marie Curie (Paris 6), Institut de Math\'ematiques de Jussieu - Paris Rive Gauche (UMR 7586), Case 247, 4 place Jussieu, 75252 Paris Cedex 05, France}
\email{arnau.padrol@imj-prg.fr}
\thanks{A.\ Padrol thanks the support of the  DFG Collaborative Research Center SFB/TR~109 ``Discretization in Geometry and Dynamics'' as well as the program PEPS
Jeunes Chercheur-e-s 2016 of the INSMI (CNRS)}
\keywords{Scribability; Stacked polytopes; Cyclic polytopes; Ball packing; $k$-sets}
\subjclass[2010]{Primary 52B11; Secondary 52C99, 51M99}
\begin{document}

\begin{abstract}
	In this paper we study various scribability problems for polytopes.  We begin
	with the classical $k$-scribability problem proposed by Steiner and
	generalized by Schulte, which asks about the existence of $d$-polytopes that
	cannot be realized with all $k$-faces tangent to a sphere.  We answer this
	problem for stacked and cyclic polytopes for all values of $d$ and $k$.  We
	then continue with the weak scribability problem proposed by Gr\"unbaum and
	Shephard, for which we complete the work of Schulte by presenting
	non weakly circumscribable $3$-polytopes. Finally, we propose new
	$(i,j)$-scribability problems, in a strong and a weak version, which
	generalize the classical ones. They ask about the existence of $d$-polytopes
	that can not be realized with all their $i$-faces ``avoiding'' the sphere and
	all their $j$-faces ``cutting'' the sphere.  We provide such
	examples for all the cases where $j-i \le d-3$.
\end{abstract}

\maketitle

\tableofcontents

\section{Introduction}\label{sec:intro}
The history of scribability problems goes back to at least 1832, when Steiner
asked whether every $3$-polytope is inscribable or
circumscribable~\cite{steiner1832}.  A polytope is \defn{inscribable} if it can
be realized with all its vertices on a sphere, and \defn{circumscribable} if it
can be realized with all its facets tangent to a sphere. Steiner's problem
remained open for nearly 100 years, until Steinitz showed that inscribability
and circumscribability are dual through polarity, and presented a technique to
construct infinitely many non-circumscribable
$3$-polytopes~\cite{steinitz1928}.  A full characterization of inscribable
$3$-polytopes had to wait still more than 60 years, until Rivin gave one in
terms of hyperbolic dihedral angles~\cite{rivin1996} (see
also~\cite{hodgson1992, rivin1993, rivin1994, rivin2003}).  This was recently
expanded by Danciger, Maloni and Schlenker~\cite{danciger2014}, who obtained
Rivin-style characterizations for $3$-polytopes that are inscribable in a
cylinder or a hyperboloid.

A natural generalization in higher dimensions is to consider realizations of
$d$-polytopes with all their $k$-faces tangent to a sphere.  A polytope with
such a realization is said to be \defn{$k$-scribable}.  This concept was
studied by Schulte~\cite{schulte1987}, who constructed examples of
$d$-polytopes that are not $k$-scribable for all the cases except for $k=1$ and
$d=3$ and the trivial cases of $d\leq 2$.  In fact, every $3$-polytope has a
realization with all its edges tangent to a sphere.  This follows from
Koebe--Andreev--Thurston's remarkable Circle Packing Theorem~\cite{koebe1936,
andreev1971, andreev1971b, thurston1997} because edge-scribed $3$-polytopes are
strongly related to circle packings; see~\cite{ziegler2007} for a nice
exposition, and~\cite{chen2016} for a discussion in higher dimensions.  This
was later generalized by Schramm~\cite{schramm1992}, who showed that an
edge-tangent realization exists even if the sphere is replaced by an arbitrary
strictly convex body with smooth boundary.

Schulte~\cite{schulte1987} also proposed a weak version of $k$-scribability,
following an idea of Gr\"unbaum and Shephard~\cite{grunbaum1987}.  A
$d$-polytope is \defn{weakly $k$-scribable} if it can be realized with the
affine hulls of all its $k$-faces tangent to a sphere.  Schulte was able to
construct examples of $d$-polytopes that are not weakly $k$-scribable for all
$k < d-2$, and left open the cases $k=d-2$ and $k=d-1$.

Scribability problems expose the intricate interplay between combinatorial and
geometric properties of convex polytopes. They arise naturally from several
seemingly unrelated contexts.  Inscribed polytopes are in correspondence with
Delaunay subdivisions of point sets. This makes their combinatorial properties
of great interest in computational geometry. They can also be interpreted as
ideal polyhedra in the Klein model of the hyperbolic space.  Moreover, several
polytope constructions are based on the existence of $k$-scribed polytopes.
For example, edge-scribed $4$-polytopes are used by the \defn{$E$-construction}
to produce $2$-simple $2$-simplicial fat polytopes~\cite{eppstein2003}. The
\defn{$E_t$-construction}, a generalization in higher dimension, exploits
$t$-scribed polytopes~\cite{paffenholz2004}.

However, our understanding on scribability properties is still quite limited.
As Gr\"unbaum and Shephard put it~\cite{grunbaum1987}: ``it is surprising that
many simple and tangible questions concerning them remain unanswered''.

In this paper we study classical \defn{$k$-scribability} problems, as well as a
generalization, \defn{$(i,j)$-scribability}, in both their strong and weak
forms.  The latter is the main object of interest of this paper; many of our
results about $k$-scribability arise as consequences of our findings on
$(i,j)$-scribability.  We focus on the existence problems.  Hence, for a family
of polytopes, we either construct scribed realizations for each of them, or
find explicit instances that are not scribable.  As we have seen, scribability
problems lie in the confluence of polyhedral combinatorics, sphere
configurations, and hyperbolic geometry; and our proof techniques and
constructions draw from all these areas.

\subsection{$k$-scribability}

Our investigation on the strong $k$-scribability problem focuses on two
important families of polytopes: \defn{stacked polytopes} and \defn{cyclic (and
neighborly) polytopes}.

By Barnette's Lower Bound Theorem~\cite{barnette1971, barnette1973}, stacked
polytopes have the minimum number of faces among all simplicial polytopes with
the same number of vertices.  The \defn{triakis tetrahedron} is a stacked
polytope among the first and smallest examples of non-inscribable polytopes
found by Steinitz~\cite{steinitz1928}.  Recently, Gonska and
Ziegler~\cite{gonska2013} completely characterized inscribable stacked
polytopes.  On the other hand, Eppstein, Kuperberg and
Ziegler~\cite{eppstein2003} showed that stacked $4$-polytopes are essentially
not edge-scribable.

In Section~\ref{sec:stacked}, we look at the other side of the story and prove
the following result, which completely answers the $k$-scribability problems
for stacked polytopes.
\begin{theorem}\label{thm:stacked}
	For any $d \ge 3$ and $0 \le k \le d-3$, there are stacked $d$-polytopes that
	are not $k$-scribable.  However, every stacked $d$-polytope is
	$(d-1)$-scribable (i.e.\ circumscribable) and $(d-2)$-scribable (i.e.\
	ridge-scribable).
\end{theorem}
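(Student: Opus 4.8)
My first move is to recast the two positive claims through polar duality with respect to the scribing sphere $S$, which sends a $k$-face tangent to $S$ to a $(d{-}1{-}k)$-face tangent to $S$; hence $P$ is $k$-scribable if and only if $\polar{P}$ is $(d{-}1{-}k)$-scribable. Since the polar dual of a stacked polytope is a \emph{truncation polytope} (vertex-stacking dualizes to vertex-truncation), circumscribability ($k=d-1$) of every stacked polytope is equivalent to \emph{inscribability} of every truncation polytope, and ridge-scribability ($k=d-2$) is equivalent to \emph{edge-scribability} of every truncation polytope. I will prove the positive part by induction on the number of stacking operations, keeping a single common sphere $S$ throughout; the base case is the simplex, which is circumscribable (insphere) and, by symmetry, ridge-scribable.

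\textbf{Positive part: the inductive step.} Suppose the current stacked polytope is realized with all its facets (resp.\ ridges) tangent to $S$, and we stack a shallow simplex on a facet $F$, which is tangent to $S$ at an interior point $p$. One cannot keep the rest of the realization fixed: through each ridge $R_i$ of $F$ there are exactly two hyperplanes tangent to $S$ (their poles are the two points in which the polar line of $R_i$ meets $S$), namely $\aff(F)$ and the neighbouring facet $G_i$, so there is no room for a \emph{third} tangent facet $\conv(\{v\}\cup R_i)$ through $R_i$. The fix is to let the neighbours pivot: I place the new apex $v$ slightly beyond $F$ and simultaneously slide the tangent points of the affected facets $G_i$ along $S$, so that after stacking each new facet $\conv(\{v\}\cup R_i')$ and its neighbour $G_i'$ are the two tangent hyperplanes through the \emph{new} ridge $R_i'$. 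For a sufficiently shallow tent this is a small perturbation of the degenerate configuration in which $v$ sits at $p$ and all new facets coincide with $F$; I would solve the resulting tangency-plus-incidence system by the implicit function theorem, parametrizing each moving facet by its tangent point on $S$. Shallowness also guarantees convexity and the correct combinatorial type, so the induction goes through; the ridge-scribable case is handled by the same perturbation scheme applied to the ridge-tangency equations.

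\textbf{Negative part: descent to links.} For the non-scribable examples I will use the principle that scribability descends to vertex figures: if $P$ is $k$-scribable with sphere $S$ and $v$ is a vertex, then intersecting $S$ and the $k$-faces through $v$ with a small sphere centered near $v$ (equivalently, projecting from $v$) exhibits the vertex figure $P/v$ as a $(k-1)$-scribable $(d-1)$-polytope with respect to the derived sphere. Iterating this $k$ times lowers both the dimension and the scribing index by one at each step, so from $(d,k)$ with $d-k\ge 3$ one reaches a $(d-k)$-polytope that would have to be $0$-scribable, i.e.\ \emph{inscribable}, in dimension $d-k\ge 3$. I would therefore construct a stacked $d$-polytope designed so that its $k$-fold iterated vertex figure is a prescribed non-inscribable stacked $(d-k)$-polytope (these exist in every dimension $\ge 3$, for instance Steinitz's triakis tetrahedron when $d-k=3$, and their higher-dimensional analogues via the Gonska--Ziegler characterization). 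A hypothetical $k$-scribed realization would, after $k$ descents, inscribe that lower-dimensional obstruction, a contradiction. This requires checking that the relevant iterated vertex figures of the constructed polytope are again stacked and reproduce the intended obstruction.

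\textbf{Main obstacles.} On the positive side the crux is the transversality of the tangency-incidence system in the inductive step: I must verify that the degenerate flat-tent configuration is a regular point of the map recording the tangent points of all affected facets, so that the implicit function theorem yields a genuine circumscribed (resp.\ ridge-scribed) shallow stacking, and that the small perturbations of neighbours do not cascade uncontrollably while preserving global convexity. On the negative side the delicate point is the descent lemma: pinning down the correct derived sphere for the vertex figure and confirming that the planted $(d-k)$-dimensional non-inscribable link is faithfully transmitted by $k$ successive vertex figures of an explicit stacked polytope.
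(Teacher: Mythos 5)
Your negative part is essentially the paper's own argument (Proposition~\ref{prop:face}): transmit the Steinitz/Gonska--Ziegler non-inscribable stacked polytopes upward through iterated vertex figures, using that every stacked polytope is an iterated vertex figure of a stacked polytope of higher dimension; that is fine, modulo the (true, easy) fact you flag as ``to be checked''. The genuine gap is in your positive part. For circumscribability, your proof \emph{is} the inductive step, and your inductive step \emph{is} the unverified claim: that the degenerate flat-tent configuration (apex at the tangency point of $F$, all new facets collapsed onto $\aff(F)$) is a regular point of the tangency-plus-incidence system, and that the pivoting of the neighbours $G_i$ --- which disturbs the ridges they share with \emph{their} neighbours, and so propagates through the whole boundary complex --- can be absorbed while keeping convexity. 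You name this as the ``main obstacle'', but it is the entire content of the statement, and you invoke the implicit function theorem exactly at a singular configuration where its hypotheses are least plausible. The paper never faces this: after the same polarity reduction it stays in the dual picture (Proposition~\ref{prop:facet}), where inscribing a truncated polytope is done by pulling each vertex to be truncated slightly outside the sphere \emph{along a carefully chosen edge} and taking the convex hull of the points where the sphere crosses the incident edges. Each pull is purely local and disturbs no other incidence; what makes this work is a combinatorial invariant (every newly created vertex lies on a simplicial facet, and the pull is along the unique edge not in that facet), not any transversality argument.

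For ridge-scribability your ``same perturbation scheme'' does not merely lack details; it cannot work, for a counting reason. Stacking a vertex $v$ onto a facet $F$ of a simplicial $d$-polytope creates $\binom{d}{2}$ new ridges $\conv(\{v\}\cup H)$, one for each $(d-3)$-face $H$ of $F$, i.e.\ $\binom{d}{2}$ tangency conditions against only $d$ new coordinates; globally, a stacked $d$-polytope with $n$ vertices has roughly $n\binom{d}{2}$ ridges but only $nd$ vertex coordinates. For $d\ge 4$ the ridge-tangency system is overdetermined, so no regular-value/implicit-function argument can produce solutions: a smooth map into a space of strictly larger dimension has no submersion points, and existence of solutions is a structural fact, not a transversality phenomenon. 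The paper's Proposition~\ref{prop:ridge} exhibits the structure: realize the simplex ridge-scribed and observe that stacking corresponds to applying the hyperbolic reflection in the facet's hyperplane, a M\"obius transformation preserving $\theSphere$, so the stacked simplex is the exact hyperbolic mirror image of its neighbour and tangency is automatic; convexity comes from the universal Coxeter group generated by these reflections (dually, this is the rigid, Apollonian-style ball-packing construction). In particular, in an actual ridge-scribed realization the added simplex is never a shallow tent near your degenerate configuration, so the solution your perturbation searches for in that neighbourhood simply is not there.
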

The proof of Theorem~\ref{thm:stacked} is divided into three parts:
Proposition~\ref{prop:face} for $0 \le k \le d-3$, Proposition~\ref{prop:ridge}
for $k=d-2$ and Proposition~\ref{prop:facet} for $k=d-1$.  The construction for
ridge-scribability extends in much more generality to connected sums of
polytopes. The circumscribability statement is equivalent to saying that
truncated polytopes are always inscribable.  In Proposition~\ref{prop:facet},
we show a stronger statement, that truncated polytopes are inscribable in any
strictly convex surface. Hence, truncated polytopes are new examples of
\defn{universally inscribable} polytopes in the sense of~\cite{gonskapadrol2016}.

On the other end of the spectrum, McMullen's Upper Bound
Theorem~\cite{mcmullen1970} states that cyclic polytopes have the maximum
number of faces among all polytopes with the same number of vertices. All
cyclic polytopes are inscribable; see~\cite{caratheodory1911},
\cite[p.~67]{grunbaum1987} \cite[p.~521]{seidel1991}
and~\cite[Proposition~17]{gonska2013} (and even universally
inscribable~\cite{gonskapadrol2016}).  The following theorem completely answers the
$k$-scribability problem for cyclic polytopes.
\begin{theorem}\label{thm:cyclic}
	For any $d > 3$ and $1\le k\le d-1$, a cyclic $d$-polytope with
	sufficiently many vertices is not $k$-scribable.
\end{theorem}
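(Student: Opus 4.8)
The plan is to work in the Lorentzian model, where the scribing sphere $\theSphere$ is the trace of the light cone and a $k$-face is tangent precisely when an associated Gram matrix is positive semidefinite of corank one. Concretely, realize the cyclic polytope $P = C_d(n)$ so that $\theSphere$ is the unit sphere, lift each vertex $v_i$ to $\hat v_i = (1,v_i) \in \Lor^{d+1}$, and note first that every vertex lies in some $k$-face (as $1 \le k \le d-1$); since each $k$-face is tangent, all its points but one lie outside the closed ball, so every $\hat v_i$ is space-like. Normalizing gives unit space-like vectors $u_i = \hat v_i / \sqrt{\Lprod{\hat v_i}{\hat v_i}}$, and I record the Gram matrix $M = (\Lprod{u_i}{u_j})$, an $n \times n$ symmetric matrix with unit diagonal and rank at most $d+1$ (with exactly one negative eigenvalue, since the center of $\theSphere$ lies in the affine hull of the $v_i$). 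Tangency of a $k$-face $F$ then reads: the principal $(k+1) \times (k+1)$ submatrix $M_F$ is positive semidefinite with $\det M_F = 0$.

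First I would settle the edge case $k=1$, which is available whenever $d \ge 4$ because $C_d(n)$ is then $2$-neighborly, so every pair of vertices spans an edge. Edge-tangency forces each $2 \times 2$ principal submatrix to be positive semidefinite and singular, i.e.\ $\Lprod{u_i}{u_j} = \pm 1$ for all $i \ne j$. Thus $M$ is a symmetric $\pm 1$ matrix with unit diagonal, and writing $M = J - 2B$ exhibits the adjacency matrix $B$ of the graph $\{i \sim j : \Lprod{u_i}{u_j} = -1\}$ as having rank at most $d+2$. The vectors $u_i$ are pairwise distinct, which rules out twins in this graph: if rows $i,j$ of $B$ agreed then $\Lprod{u_i - u_j}{u_\ell} = 0$ for all $\ell \ne i,j$, and for $n$ large the remaining vectors span $\Lor^{d+1}$, forcing $u_i = u_j$. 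A reduced graph whose adjacency matrix has rank $r$ has at most $O(2^{r/2})$ vertices (Kotlov--Lov\'asz), so $n$ is bounded by a function of $d$; hence $C_d(n)$ is not edge-scribable once $n$ is large.

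To reach the remaining values of $k$ I would push this obstruction down along vertex figures. If $P$ is $k$-scribable then, fixing a vertex $v$, the $k$-faces through $v$ are all tangent to $\theSphere$, and the standard horosphere construction at $v$ realizes the vertex figure $P/v$ (a $(d-1)$-polytope) with all its $(k-1)$-faces tangent to an induced sphere; moreover $P/v$ inherits neighborliness of one lower order and retains $n-1$ vertices. Iterating reduces $k$-scribability of $C_d(n)$ to edge-scribability of a neighborly polytope with many vertices, to which the previous paragraph applies, covering the lower range $k \le \lfloor d/2\rfloor - 1$. \emph{The main obstacle is the upper range} $\lfloor d/2\rfloor \le k \le d-1$, culminating in circumscribability ($k=d-1$): there not every $(k+1)$-subset of vertices is a face, so tangency becomes a genuine vanishing-determinant condition on selected principal minors rather than a $\pm 1$ constraint, and the clean rank/twin argument no longer applies directly. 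I expect to handle this either by polarity---reducing facet-scribability of $C_d(n)$ to inscribability of its (simple, dual-neighborly) polar and bounding the latter---or by a higher-order analogue of the Gram-matrix rank bound that exploits Gale's evenness condition to locate enough positive-semidefinite singular minors forced by neighborliness; making this quantitative against the $O(nd)$ degrees of freedom of a realization is the crux of the proof.
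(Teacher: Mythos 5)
Your proposal is incomplete in an essential way: by your own account it establishes non-$k$-scribability only for $1\le k\le \floor{d/2}-1$, and the upper range $\floor{d/2}\le k\le d-1$ --- in particular circumscribability, $k=d-1$ --- is left as ``the crux''. That upper range is precisely the hard part of the theorem, and neither of the strategies you float closes it. Polarity is a pure restatement: circumscribability of $\cyc{d}{n}$ is equivalent to inscribability of its polar, and you provide no tool for bounding inscribed realizations of duals of cyclic polytopes. A ``higher-order Gram analogue'' collides with exactly the obstacle you yourself identify: for large $k$ the tangency conditions constrain only those principal minors indexed by faces, and no counting of these constraints against the realization space is carried out. (Your lower-range argument, by contrast, is essentially sound, and simpler than you make it: with the paper's strong notion of tangency the tangency point lies in $\relint$ of each edge, so the null direction of the restricted form is a \emph{positive} combination of the two lifted vertices, forcing $\Lprod{u_i}{u_j}=-1$ for every pair; then $M=2I-J$, so $n-1\le\operatorname{rank}(M)\le d+1$ and $n\le d+2$, with no need for Kotlov--Lov\'asz. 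The vertex-figure reduction you invoke is Lemma~\ref{lem:property}(iv). This is a genuinely different, and more elementary, route to the $k=1$ case than the paper's Proposition~\ref{prop:neighborly}, but it only buys you the lower half of the range.)

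The missing idea --- the paper's actual mechanism --- is a global, metric obstruction rather than a local, algebraic one. The $k$-set Lemma~\ref{lem:kset} shows that for even $d$ every $(3d/2-1)$-set of $\cyc{d}{n}$ contains a facet; this exploits Gale's evenness condition together with the rigidity of the oriented matroid of even-dimensional cyclic polytopes (every realization has its vertices on a curve of order $d$). Hence in a putative $(1,d-1)$-scribed realization every such set cuts $\theSphere$, so the spherical caps associated to the vertices form a ply system, and the Sphere Separator Theorem~\ref{prop:separator} forces their intersection graph to admit a sublinear separator; but edges avoiding $\theSphere$ make that graph complete --- a contradiction for large $n$ (Proposition~\ref{prop:even}). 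Since a $k$-scribed realization is in particular $(1,d-1)$-scribed for every $1\le k\le d-1$ by Lemma~\ref{lem:property}(i), this settles \emph{all} values of $k$ in even dimensions at once; odd dimensions then follow by passing to the vertex figure $\cyc{d-1}{n-1}$ (Corollary~\ref{cor:2d-1}), with $k=1$ handled by Proposition~\ref{prop:neighborly}. Note finally an even/odd asymmetry that your plan overlooks and that dooms any uniform treatment of the upper range: for odd $d$ the polytope $\cyc{d}{n}$ \emph{is} $(1,d-1)$-scribable (Proposition~\ref{prop:odd}), so in odd dimensions the obstruction to circumscribability cannot take the form ``all facets cut and all edges avoid'' in the polytope itself; it must descend through vertex figures to an even-dimensional statement, exactly as the paper does.
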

Theorem~\ref{thm:cyclic} is derived from more general results on
$(i,j)$-scribability problems, namely Corollary~\ref{cor:2d-1} and
Proposition~\ref{prop:neighborly}.  The latter concerns $k$-neighborly
polytopes, i.e.\ polytopes with a complete $k$-skeleton.  Cyclic polytopes are
$\lfloor d/2 \rfloor$-neighborly, and (simplicial) $\lfloor d/2
\rfloor$-neighborly polytopes (usually called simply neighborly) are precisely
those with the same number of faces as the corresponding cyclic polytope.
Proposition~\ref{prop:neighborly} implies that $j$-neighborly $d$-polytopes
with sufficiently many vertices are not $k$-scribable for $1 \le k \le j$, from
which we derive that:
\begin{theorem}\label{thm:fvector}
	For any $d > 3$ and $1 \le k \le d-2$, there are $f$-vectors such that no
	$d$-polytope with those $f$-vectors are $k$-scribable.
\end{theorem}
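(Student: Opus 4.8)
The plan is to reduce the statement, for each admissible $k$, to the non-$k$-scribability of neighborly polytopes supplied by Proposition~\ref{prop:neighborly}, and to their polar duals. The crucial point is that prescribing the $f$-vector of a cyclic polytope is an extremely rigid condition: it already forces the realizing polytope to be neighborly. Concretely, fix $n$ large and set $\varphi = f(\cyc{d}{n})$, and let $P$ be an arbitrary $d$-polytope with $f(P)=\varphi$. Counting incident pairs (ridge, facet) in $P$ gives $2 f_{d-2}(P) \ge d\, f_{d-1}(P)$, because every facet is a $(d-1)$-polytope and hence has at least $d$ ridges, with equality exactly when every facet is a simplex, i.e.\ when $P$ is simplicial. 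Since $\cyc{d}{n}$ is simplicial it satisfies $2 f_{d-2}(\cyc{d}{n}) = d\, f_{d-1}(\cyc{d}{n})$, and as $P$ shares these two entries of the $f$-vector, the inequality for $P$ is forced to be an equality; thus $P$ is simplicial. By the characterization recalled in the introduction (simplicial $\lfloor d/2\rfloor$-neighborly polytopes are precisely those with the $f$-vector of the cyclic polytope), $P$ is $\lfloor d/2\rfloor$-neighborly. Therefore, for $n$ large enough, Proposition~\ref{prop:neighborly} shows that $P$ is not $k$-scribable for every $1\le k\le \lfloor d/2\rfloor$; so $\varphi$ is a witnessing $f$-vector for this range.

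To reach the complementary range $\lceil d/2\rceil-1\le k\le d-2$, I would pass to polars. Let $\psi$ be the reverse of $\varphi$, namely $\psi = f(\polar{(\cyc{d}{n})})$, and let $Q$ be any $d$-polytope with $f(Q)=\psi$. Then $f(\polar{Q})=\varphi$, so by the previous paragraph $\polar{Q}$ is a $\lfloor d/2\rfloor$-neighborly polytope on $n$ vertices, and for $n$ large it is not $(d-1-k)$-scribable whenever $1\le d-1-k\le \lfloor d/2\rfloor$. Since $k$-scribability of a polytope is equivalent to $(d-1-k)$-scribability of its polar dual (the extension to all $k$ of Steinitz's inscribable--circumscribable duality), $Q$ is not $k$-scribable in precisely the range $\lceil d/2\rceil-1\le k\le d-2$. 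Hence $\psi$ is a witnessing $f$-vector for the upper range.

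It remains to verify that the two ranges cover everything. Since $\lfloor d/2\rfloor \ge \lceil d/2\rceil-1$ for all $d$, the intervals $[1,\lfloor d/2\rfloor]$ and $[\lceil d/2\rceil-1,\,d-2]$ overlap (or meet), and, using $d>3$, their union is exactly $[1,d-2]$. Thus for every $k$ in the claimed range one of $\varphi$ or $\psi$ is an $f$-vector none of whose realizations is $k$-scribable, which is the assertion of the theorem. The step I expect to require the most care is the rigidity claim: deducing simpliciality, and hence neighborliness, purely from the combinatorial datum of the $f$-vector, so that the geometric conclusion of Proposition~\ref{prop:neighborly} becomes applicable. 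Once this is secured, the polarity argument for the upper half of the range is entirely routine.
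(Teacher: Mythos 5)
Your proof is correct and takes essentially the same route as the paper: both reduce the theorem to Proposition~\ref{prop:neighborly} via the observation that any $d$-polytope sharing the $f$-vector of $\cyc{d}{n}$ must be $\lfloor d/2\rfloor$-neighborly, and both obtain the upper range of $k$ by polarity. The only difference is how that rigidity step is justified: you force simpliciality by ridge--facet incidence counting and then invoke the equality case of the Upper Bound Theorem, whereas the paper simply remarks that $k$-neighborliness is a property readable off the $f$-vector alone (it is equivalent to $f_{k-1}=\binom{f_0}{k}$), which makes the simpliciality detour unnecessary.
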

For $k=d-3$, examples of such $f$-vectors are already given by stacked
$d$-polytopes with more than $d+2$ vertices; see~\cite{eppstein2003} for $d=4$
and Proposition~\ref{prop:face} for higher dimensions.  It is still an open
question whether all $f$-vectors of polytopes appear as $f$-vectors of
inscribable polytopes~\cite{gonska2013}.  This could be settled by a solution
to Conjecture~\ref{conj:neighnocircum}, which asserts that neighborly polytopes
with sufficiently many vertices are not circumscribable .

We also look into \defn{weak $k$-scribability}.  The main difficulty that
prevented Schulte from settling the $k=d-2$ and $k=d-1$ cases
in~\cite{schulte1987} was that weak scribability does not behave well under
polarity. In this paper, we identify convex polytopes with pointed polyhedral
cones in Lorentzian space (see Section~\ref{sec:prelim} for details).  In this
set-up, the definition of weak scribability is slightly weaker than in
Euclidean space, but behaves well under polarity.  This allows us to construct
examples that prove the following theorem.  Note that, since our definition
imposes weaker conditions, these examples are also not weakly $k$-scribable in
Schulte's sense, hence closing the cases left open by Schulte.
\begin{theorem}\label{thm:weakk}
	For any $d\geq 3$ and $0\le k \le d-1$ with the exception of $(d,k)=(3,1)$, there
	are $d$-polytopes that are not weakly $k$-scribable.
\end{theorem}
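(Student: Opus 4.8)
The plan is to organize the proof around the polarity duality for weak scribability that the Lorentzian reformulation makes available. The fact I would record first is that projective polarity with respect to the light cone sends each $j$-face of a cone to the complementary $(d-1-j)$-face of the polar cone and preserves tangency of linear spans to the light cone; consequently $P$ is weakly $k$-scribable if and only if $\polar{P}$ is weakly $(d-1-k)$-scribable. This is exactly the symmetry that fails for Schulte's Euclidean definition, and it is what lets us reach his two open cases. Moreover, since tangency in the Lorentzian sense is a projective, hence weaker, condition than tangency to a Euclidean sphere, any polytope we exhibit here as non weakly $k$-scribable is automatically non weakly $k$-scribable in Schulte's sense.

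With the duality in hand, it suffices to produce non weakly $k$-scribable $d$-polytopes for $0\le k\le\ffloor{d-1}{2}$, since polarity then covers the complementary range $\fceil{d-1}{2}\le k\le d-1$; in particular the new facet case $k=d-1$ is dual to the vertex case $k=0$, and the new ridge case $k=d-2$ is dual to the edge case $k=1$. For the endpoint $k=0$, weak $0$-scribability is the condition that every vertex lie on the quadric, i.e.\ inscribability, so I would take any non-inscribable polytope; such polytopes exist for all $d\ge 3$ (Steinitz in dimension three, and suitable stacked polytopes in higher dimensions through the Gonska--Ziegler characterization). Dualizing these settles $k=d-1$, yielding in particular the non weakly circumscribable $3$-polytopes promised in the abstract.

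For the remaining range $1\le k\le\ffloor{d-1}{2}$ I would exhibit explicit polytopes whose $k$-faces cannot all have their spans simultaneously tangent to one quadric. Schulte's Euclidean examples for $1\le k\le d-3$ serve as templates, but the argument must be upgraded to forbid all \emph{projective} realizations rather than only affine ones. The strategy is to read off from the combinatorics an over-determined system of tangency conditions on the realization and to derive a contradiction by an incidence or rigidity argument. Finally, the excluded pair $(d,k)=(3,1)$ is genuinely unavoidable: every $3$-polytope is edge-scribable by the Koebe--Andreev--Thurston theorem, and $k=1$ is the self-dual middle dimension when $d=3$, so no counterexample can exist.

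I expect the main obstacle to be the edge case $k=1$ (equivalently, by duality, the ridge case $k=d-2$). Unlike $k=0$, weak edge-scribability is strictly weaker than its Euclidean analogue, because an edge joining two points of the quadric is a secant rather than a tangent line, so inscribability gives no help and one cannot simply invoke the known non-edge-scribable stacked $4$-polytopes of Eppstein--Kuperberg--Ziegler. The delicate point is controlling the degenerate quadrics that appear when edge-spans become tangent ``at infinity''; the construction must be chosen so that the resulting tangency system remains manifestly inconsistent even after allowing this extra projective freedom.
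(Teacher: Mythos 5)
Your polarity symmetry is exactly the paper's key tool (the weak version of Lemma~\ref{lem:property}(ii)), and your explanation of why $(d,k)=(3,1)$ is excluded is correct. However, there are two genuine gaps. The first is your base case $k=0$: you claim weak $0$-scribability is inscribability and propose to use Steinitz's / Gonska--Ziegler's non-inscribable polytopes. That identification is only valid for Schulte's Euclidean definition. In the spherical/Lorentzian setting of Definition~\ref{def:weak} --- the setting the theorem is stated in, and the one you need for the polarity symmetry you rely on --- a vertex is weakly tangent when it lies on $\theSphere\cup-\theSphere$, and in a transversal-hyperplane picture the ``sphere'' becomes an arbitrary quadric (e.g.\ a two-sheeted hyperboloid). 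Weak inscribability is therefore \emph{strictly} weaker than inscribability: the paper's Example~\ref{ex:weak} exhibits the triakis tetrahedron, the classical non-inscribable polytope, as weakly inscribed. So your $k=0$ examples (and, by polarity, your $k=d-1$ examples) prove nothing here. The paper instead must produce a polytope that fails even this weaker condition: the truncated cube (Example~\ref{exp:trunc_cube}), ruled out by a quadric-incidence argument (seven vertices of a combinatorial $3$-cube on a quadric force the eighth onto it, while the relevant quadric contains no lines).

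The second gap is the range $1\le k\le \ffloor{d-1}{2}$, for which you offer only a plan (``an over-determined system of tangency conditions \dots\ an incidence or rigidity argument''), not an argument; you correctly observe that Eppstein--Kuperberg--Ziegler cannot be invoked, but you do not replace it with anything. The paper's actual mechanism is one you never mention: pyramids. A pyramid over a $(d-1)$-polytope $\cQ$ has $\cQ$ both as a facet and as a vertex figure, so by the weak versions of Lemma~\ref{lem:property}(iii) and (iv) (with $i=j=k$), if $\cQ$ is not weakly $k$-scribable then the pyramid is neither weakly $k$- nor weakly $(k+1)$-scribable. Starting in dimension $3$ from the truncated cube and its polar and iterating pyramids (together with polarity), this covers every pair $(d,k)$ in the statement. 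Without a correct base case and an induction step of this kind, your outline does not yield the theorem.
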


\subsection{$(i,j)$-scribability}

We propose the new concept of \defn{$(i,j)$-scribability}.  A polytope is
\defn{$(i,j)$-scribable} if it can be realized with all its $i$-faces
``avoiding'' the sphere and all its $j$-faces ``cutting'' the sphere.  The
definitions of cutting and avoiding come with a \defn{strong} and a \defn{weak}
form; see~Definition~\ref{def:cutavoid}.  They are designed to behave well
under polarity, and to reduce to classical $k$-scribability when $i=j=k$.  This
makes $(i,j)$-scribability a very useful tool for studying classical
$k$-scribability problems, as we will see with cyclic polytopes.  They are also
an interesting topic in their own right.  Notably, $(0,1)$-scribed
$3$-polytopes have been studied as \defn{hyperideal polyhedra} in hyperbolic
space~\cite{bao2002, schlenker2005}.

One of our main results is the following theorem, which constructs examples of
polytopes that are not $(i,j)$-scribable.

\begin{theorem}\label{thm:strongij}
	For $d>3$ and $0\le i\le j\le d-1$, there are $d$-polytopes that are not
	strongly $(i,j)$-scribable for $j-i\le d-2$ when $d$ is even, or $j-i \le
	d-3$ when $d$ is odd.
\end{theorem}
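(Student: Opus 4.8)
The plan is to construct, for each admissible pair $(i,j)$, a family of $d$-polytopes that cannot be strongly $(i,j)$-scribed. The strategy I would pursue is to reduce everything to a single ``worst case'' via monotonicity, and then to build explicit obstructions using neighborliness together with a dimension/counting argument. First I would establish that strong $(i,j)$-scribability is monotone: if a polytope is strongly $(i,j)$-scribable, then it is strongly $(i',j')$-scribable for $i\le i'\le j'\le j$. This is plausible because ``avoiding'' and ``cutting'' the sphere are conditions on faces of a fixed dimension, and shrinking the gap $[i,j]$ should weaken the constraint: an $i$-face avoiding the sphere forces its sub-faces to avoid it as well, and dually a $j$-face cutting the sphere forces its super-faces to cut it. Granting such a lemma, it suffices to rule out strong $(i,j)$-scribability only for the \emph{extremal} pairs with $j-i$ exactly equal to the stated bound; all smaller gaps follow.

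With monotonicity in hand, the core of the proof becomes a non-existence argument for these extremal pairs, and here I would lean on the $(i,j)$-scribability analogues of the cyclic/neighborly obstructions that the introduction promises (Corollary~\ref{cor:2d-1} and Proposition~\ref{prop:neighborly}). The key idea is that in Lorentzian space a strong $(i,j)$-scribed realization places the $i$-faces outside the light cone (avoiding) and the $j$-faces crossing it (cutting), and these positions are governed by the signs of certain Lorentzian inner products of the spanning subspaces. For a $\lfloor d/2\rfloor$-neighborly polytope with many vertices, the number of $i$-faces grows polynomially with a large exponent, and each imposes an open sign condition on the Lorentzian Gram data; I would count degrees of freedom (the realization space modulo the Lorentz group $O(d,1)$ has dimension roughly linear in the number of vertices) against the number of independent constraints, and show that for sufficiently many vertices the constraints are overdetermined. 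The parity split in the statement — $j-i\le d-2$ for even $d$ versus $j-i\le d-3$ for odd $d$ — should emerge precisely from how $\lfloor d/2\rfloor$-neighborliness interacts with the middle dimension: in even dimension one extra layer of faces is fully present in the skeleton, giving one more usable value of the gap.

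Concretely, I would take the witnessing polytopes to be cyclic $d$-polytopes $\cC_d(n)$ with $n$ large, exactly as for Theorem~\ref{thm:cyclic}. For the avoiding side I would use that all faces up to dimension $\lfloor d/2\rfloor-1$ are simplices forming a complete skeleton, so I can control the combinatorics of which faces must avoid the sphere; for the cutting side I would exploit the rigidity of how high-dimensional faces of a neighborly polytope must intersect any quadric. The main technical step is a packing/separation inequality: I would show that the avoiding $i$-faces behave like a configuration of pairwise ``far'' objects on the sphere (a ball-packing-type bound, matching the \emph{Ball packing} keyword), so their number is bounded in terms of $d$ and the gap $j-i$, contradicting the polynomial growth coming from neighborliness once $n$ is large.

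The hard part, I expect, will be making the Lorentzian sign conditions for cutting and avoiding genuinely \emph{interact} across the whole $[i,j]$ range rather than collapsing to the single-dimension ($i=j=k$) case already handled by $k$-scribability. Specifically, the delicate point is proving the packing bound for avoiding faces \emph{while simultaneously} the $j$-faces are constrained to cut the sphere: the cutting constraints on the large faces reshape the admissible region for the small faces, and I must show this reshaping still forces a finite packing bound with the correct dependence on $d-3$ versus $d-2$. Isolating where the odd/even parity enters this packing estimate, and verifying that the bound degrades by exactly one in odd dimension, is the crux; I anticipate this requires a careful analysis of the middle-dimensional faces of $\cC_d(n)$ and their Gale duals, rather than a purely soft dimension count.
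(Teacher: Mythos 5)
Your proposal has two genuine gaps, one structural and one at the heart of the argument. First, the monotonicity lemma you propose is stated backwards: the correct statement (Lemma~\ref{lem:property}(i)) is that $(i,j)$-scribability implies $(i',j')$-scribability for $i'\le i$ and $j'\ge j$, i.e.\ \emph{enlarging} the gap weakens the condition --- exactly what your own justification (``sub-faces of an avoiding face avoid, super-faces of a cutting face cut'') proves. Your stated version, with $i\le i'\le j'\le j$, is false, and under it your reduction would run the wrong way (it would require ruling out the \emph{smallest} gaps, not the extremal ones). Moreover, even with the correct lemma, one extremal pair only kills the pairs nested inside it; since cyclic polytopes are inscribable, hence $(0,j)$-scribable for every $j$, they can never witness a pair with $i=0$, and you must pass to polars via Lemma~\ref{lem:property}(ii) to cover those cases --- a step absent from your plan.

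The central gap is your main mechanism. Counting degrees of freedom of the realization space against the number of Lorentzian sign conditions and concluding the system is ``overdetermined'' cannot work: cutting and avoiding are \emph{open inequality} conditions, not equations, and arbitrarily many strict inequalities can hold simultaneously on a full-dimensional set; overdetermination is meaningful only for equality constraints (such as tangency in $k$-scribability). The paper's actual contradiction for even $d$ (Proposition~\ref{prop:even}) has a different shape: in a hypothetical $(1,d-1)$-scribed realization of $\cyc{d}{n}$, (a) the $k$-set Lemma~\ref{lem:kset} --- which rests on the oriented-matroid rigidity of \emph{even}-dimensional cyclic polytopes (every realization has its vertices on an order-$d$ curve) together with Gale's evenness condition --- shows every $(3d/2-1)$-set contains a facet, so the hypothesis that facets cut $\theSphere$ makes the vertex caps a $k$-ply system (Lemma~\ref{lem:kply}); (b) $2$-neighborliness plus the hypothesis that edges avoid $\theSphere$ makes the cap intersection graph \emph{complete} (Corollary~\ref{cor:edges}); and (c) the Sphere Separator Theorem~\ref{prop:separator} gives $k$-ply systems sublinear separators, which complete graphs on many vertices do not have. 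Note that your ``packing bound'' intuition has the geometry inverted: avoiding edges force caps to pairwise \emph{overlap}, not to be pairwise far, so no packing bound on their number exists, and the interaction between cutting and avoiding is mediated precisely by the $k$-set lemma plus the separator theorem, which your proposal never identifies (and whose crux you yourself flag as unresolved). Finally, the even/odd split is not about ``one extra layer of skeleton'': odd-dimensional cyclic polytopes genuinely \emph{are} $(1,d-1)$-scribable (Proposition~\ref{prop:odd}) because their oriented matroids are not rigid; the odd case of the theorem is obtained instead from pyramids over even-dimensional cyclic polytopes via the facet and vertex-figure reductions of Lemma~\ref{lem:property}(iii)--(iv) (Corollary~\ref{cor:1d-2}), together with polarity, which is why the bound drops from $d-2$ to $d-3$.
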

Theorem~\ref{thm:strongij} follows from Proposition~\ref{prop:even}, which
asserts that even dimensional cyclic polytopes with sufficiently many vertices
are not strongly $(1,d-1)$-scribable.  The proof uses the Sphere Separator
Theorem~\cite{miller1997}.  Theorems~\ref{thm:cyclic} and \ref{thm:fvector}
mentioned before arise as corollaries.  An alternative construction for
polytopes that are not strongly $(0,d-3)$-scribable is given in
Proposition~\ref{prop:0d-3}, which uses the Stamp Theorem of
Below~\cite{below2002} and Dobbins~\cite{dobbins2011}. 

As for stacked polytopes, we can show that
\begin{theorem}\label{thm:stack01}
	For any $d>3$ and $0\leq i\leq d-4$, there are stacked $d$-polytopes that are
	not $(i,i+1)$-scribable.
\end{theorem}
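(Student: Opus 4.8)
The plan is to prove the statement by induction on the dimension, reducing the pair $(i,i+1)$ in dimension $d$ to the pair $(i-1,i)$ in dimension $d-1$ by passing to a vertex figure. The combinatorial half of the induction is a construction: given any stacked $(d-1)$-polytope $Q$, I will build a stacked $d$-polytope $P$ together with a distinguished vertex $v$ whose vertex figure $P/v$ is combinatorially $Q$. To do this, start from a $d$-simplex with apex $v$ over a simplex realizing the initial simplex of $Q$, and then mimic the stacking sequence of $Q$: each time $Q$ is obtained by stacking over a facet $G$, stack $P$ over the facet $\{v\}\cup G$. Every such operation keeps $P$ stacked and only enlarges the link of $v$, performing exactly the corresponding stacking inside that link, so that at the end the link of $v$ equals $\partial Q$ and $P/v\cong Q$.

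For the geometric half, suppose $P$ is $(i,i+1)$-scribable and fix a realization in which all $i$-faces avoid the sphere $S=\partial B$ and all $(i+1)$-faces cut it. Since $i\ge 0$, the vertex $v$ avoids $B$ and hence lies strictly outside it. Passing to the vertex figure amounts to projecting from $v$: the cone of rays from $v$ tangent to $S$ meets the vertex-figure hyperplane in an ellipsoid $S_v$, which is projectively a sphere, and a face $F\ni v$ cuts $S$ if and only if its image $\bar F$ cuts $S_v$, and avoids $S$ if and only if $\bar F$ avoids $S_v$. Thus the faces through $v$ of dimension $\le i$ (which avoid $S$) become faces of $P/v$ of dimension $\le i-1$ avoiding $S_v$, and the faces through $v$ of dimension $\ge i+1$ (which cut $S$) become faces of $P/v$ of dimension $\ge i$ cutting $S_v$. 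Hence $P/v\cong Q$ is $(i-1,i)$-scribable. Choosing for $Q$ a stacked $(d-1)$-polytope that is not $(i-1,i)$-scribable, which exists by the inductive hypothesis because $i-1\le (d-1)-4$ is equivalent to $i\le d-4$, forces $P$ not to be $(i,i+1)$-scribable. Each step lowers both the dimension and the first index by one and the admissible ranges match at every stage, so the induction on $i$ bottoms out at $i=0$ in dimension $d-i\ge 4$.

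The main obstacle is therefore the base case: for every $D\ge 4$ there must exist a stacked $D$-polytope that is not $(0,1)$-scribable, that is, one that cannot be realized with all vertices outside $B$ and all edges meeting $B$, the higher-dimensional analogue of a hyperideal polytope. I expect this to be the genuinely hard step, since $(0,1)$-scribability is strictly weaker than both inscribability and edge-scribability and so cannot be deduced from the non-$k$-scribability results of Proposition~\ref{prop:face}. My plan is to take a deeply stacked polytope and extract a contradiction from a counting estimate on $S$. By polarity this is a \emph{truncated} $D$-polytope that is not $(D-2,D-1)$-scribable: every facet must protrude through $B$ while every ridge avoids $B$. Each protruding facet cuts out a cap of $S$, the avoiding ridges keep the caps of adjacent facets separated, and a packing/covering estimate on $S$ then bounds the number of facets, which is contradicted once the truncation, equivalently the stacking, is deep enough. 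Alternatively one may phrase the obstruction hyperbolically, viewing the vertices as hyperideal points truncated by their polar hyperplanes and invoking a finiteness constraint on such configurations.

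Finally, the restriction $i\le d-4$ is forced from outside the induction: when $i=d-3$ one has $i+1=d-2$, and since every stacked $d$-polytope is $(d-2)$-scribable (ridge-scribable) by Proposition~\ref{prop:ridge}, it is automatically $(d-3,d-2)$-scribable. Thus no stacked polytope can be a counterexample in that range, which is precisely where the base of the induction would run out of room.
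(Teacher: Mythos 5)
Your inductive skeleton is exactly the paper's. The reduction from $(i,i+1)$-scribability in dimension $d$ to $(i-1,i)$-scribability of a vertex figure is Lemma~\ref{lem:property}(iv), and the combinatorial fact that every stacked $(d-1)$-polytope is the vertex figure of a stacked $d$-polytope is used verbatim in the proof of Proposition~\ref{prop:face}. (One small caveat in your re-derivation of the geometric half: a strongly avoiding vertex need not lie \emph{strictly} outside $\theBall$ --- it may be tangent to $\theSphere$, in which case the projection from $v$ degenerates to a horospherical picture; the paper avoids this case entirely by deducing (iv) from (iii) via polarity.) Your closing remark that Proposition~\ref{prop:ridge} forces the bound $i\le d-4$ also matches the paper.

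The genuine gap is the base case, which you correctly identify as the crux but do not prove, and the plan you sketch for it would fail. In your dual formulation (all facets cutting, all ridges avoiding; equivalently, vertices avoiding and edges cutting), the only disjointness available is between caps of \emph{adjacent} vertices --- this is precisely Corollary~\ref{cor:edges} --- and the graph of a stacked polytope is very sparse. More fundamentally, no packing or covering estimate on $\theSphere$ bounds the number of pairwise disjoint caps: they can be made arbitrarily small, so ``deep enough stacking'' never produces a contradiction by counting alone. This is exactly why the paper's non-scribability argument for neighborly polytopes (Proposition~\ref{prop:even}) needs the Sphere Separator Theorem~\ref{prop:separator} applied to a \emph{complete} intersection graph; neither ingredient exists for stacked polytopes. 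What the paper actually does is confined to dimension $4$ and is of a different nature: it views $\theBall$ as the Klein model of $\Hyp^4$, realizes the links of edges of a $(0,1)$-scribed polytope as spherical or Euclidean triangles, proves the angle bound of Lemma~\ref{lem:dihedral} using H\"ohn's theorem on angle sums of Euclidean and hyperbolic simplices, and then, for the simplex stacked twice over every facet, counts: $26$ simplices and $40$ ridges, with total dihedral angle at least $40\pi$, so some ridge has dihedral angle $\ge\pi$, destroying convexity (Proposition~\ref{prop:stack01}). Note also that the paper itself remarks in Section~\ref{sec:open} that this angle-sum technique breaks down in dimension $5$ and higher --- which is precisely where your induction would need base cases in order to reach the pairs with $i<d-4$, whereas the paper's own derivation from the dimension-$4$ example most directly settles the case $i=d-4$. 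In short, the step you defer to a ``plan'' is the entire mathematical content of the theorem, and the plan as stated cannot be repaired by counting caps.
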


The existence of $d$-polytopes that are not $(0,d-1)$-scribable is an
interesting open problem.  It essentially asks whether every polytope has a
realization with all its vertices outside a sphere and all its facets cutting
it.  This is a self-polar property. Note that every $3$-polytope is
$(0,2)$-scribable by the Circle Packing Theorem. Moreover, cyclic and stacked
$d$-polytopes, i.e.\ both ends of the simplicial $f$-vector spectrum, are also
$(0,d-1)$-scribable. We suspect nevertheless that there exist polytopes that
are not $(0,d-1)$-scribable, although our constructions cannot provide them.

In contrast, weak $(i,j)$-scribability is not so challenging. It turns out to
be indeed quite weak, as we can see in the following theorem
(cf.~Theorem~\ref{thm:weakk}).
\begin{theorem}\label{thm:weakij}
	Every $d$-polytope is weakly $(i,j)$-scribable for $0 \le i < j \le d-1$.
\end{theorem}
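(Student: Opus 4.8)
The plan is to work in the Lorentzian cone model of Section~\ref{sec:prelim}, where a realization of a $d$-polytope $P$ is a pointed cone in $\Lor^{d+1}$, a $k$-face of $P$ corresponds to a $(k+1)$-dimensional face of the cone, and the sphere $\theSphere$ is the light cone $\theLightCone$. In this language weak $(i,j)$-scribability (Definition~\ref{def:cutavoid}) becomes a statement about the signature of the Lorentzian form restricted to the linear spans of the cone faces: an $i$-face weakly avoids $\theSphere$ exactly when the span of its cone face is positive semidefinite (it misses the open timelike region, equivalently its affine hull misses the open ball $\interior\theBall$), and a $j$-face weakly cuts $\theSphere$ exactly when that span is indefinite (its affine hull meets $\interior\theBall$). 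I would keep both the Euclidean picture ``affine hull versus open ball'' and the Lorentzian picture ``span versus light cone'' available, since the first is more intuitive for the construction and the second makes the behaviour under polarity transparent.

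The key structural observation is that these two conditions are monotone along the face lattice, because $F\subseteq G$ implies $\aff(F)\subseteq\aff(G)$ (equivalently the linear spans of the cone faces are nested). Hence, within a single realization, if every $(i+1)$-face cuts $\theSphere$ then so does every face of dimension $\ge i+1$, as each contains an $(i+1)$-face; and if every $i$-face avoids $\theSphere$ then so does every face of dimension $\le i$. Consequently, to prove Theorem~\ref{thm:weakij} it suffices to treat the \emph{consecutive} case $j=i+1$: a realization that is weakly $(i,i+1)$-scribable is automatically weakly $(i,j)$-scribable for every $j$ with $i<j\le d-1$. Moreover, since weak $(i,i+1)$-scribability of $P$ is polar-dual to weak $(d-2-i,d-1-i)$-scribability of the dual polytope, I may further assume $i\le (d-2)/2$.

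It then remains to realize an arbitrary $d$-polytope, for each $0\le i\le d-2$, together with a sphere $\theSphere$ of centre $c$ and radius $r$ so that the affine hull of every $i$-face stays at distance $\ge r$ from $c$ while the affine hull of every $(i+1)$-face comes within distance $<r$ of $c$; that is, so that $\theSphere$ passes through the relative interior $\relint$ of every $(i+1)$-face and avoids $\aff(F)$ for every $i$-face $F$. I would produce this from a sufficiently ``round'' realization, in which the affine hulls of all $(i+1)$-faces lie uniformly closer to a common centre than the affine hulls of all $i$-faces, and then take $r$ between the two thresholds; the slack granted by the weak definition (tangency is permitted and the dimension gap is strict) is exactly what leaves room for such an $r$.

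The main obstacle is precisely this uniform metric control over all faces of a fixed dimension simultaneously. For highly symmetric polytopes the faces of each dimension are equidistant from the centre of symmetry and the separation is immediate, but for a general combinatorial type neither inscribability nor circumscribability is available, so the required realization cannot simply be read off. I expect to obtain it by a controlled deformation toward a spherical shell, moving the $(i+1)$-skeleton close to $\theSphere$ while pushing the lower faces outward, or, in the Lorentzian formulation, by choosing the realization of the cone so that the signature of the Lorentzian form on the face spans jumps from positive semidefinite to indefinite exactly between dimensions $i+1$ and $i+2$. Verifying that such a realization exists for every polytope, uniformly over all of its faces, is the step that will demand the most care.
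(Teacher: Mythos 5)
Your reduction to the consecutive case $j=i+1$ is correct and matches the paper's opening move (it is just the weak version of Lemma~\ref{lem:property}(i): affine hulls are nested along the face lattice, so $(i,i+1)$-scribed implies $(i,j)$-scribed for all $j>i$). But your proposal stops exactly where the theorem starts. The step you defer --- producing, for an arbitrary combinatorial type, a realization in which the affine hulls of all $(i+1)$-faces come within distance $r$ of a common center while the affine hulls of all $i$-faces stay at distance at least $r$ --- is the entire content of the statement, and your proposed mechanism (``a controlled deformation toward a spherical shell'', or prescribing where the signature of the Lorentzian form jumps on face spans) is not an argument: no deformation is exhibited, no reason is given that it preserves the combinatorial type, and uniform metric control over a whole skeleton is precisely the kind of thing that fails in the non-weak settings (cf.\ Theorem~\ref{thm:weakk}), so it cannot be expected to come for free. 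You have correctly identified the gap yourself; it is a genuine one.

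The idea you are missing is that you never need to touch the polytope: you only need to choose the quadric, and an ellipsoid is as good as a round sphere, because the affine map sending an ellipsoid to the unit ball $\theBall$ carries any realization of $\cP$ to another realization, and weak cutting/avoiding are conditions on affine hulls only. The paper's proof runs as follows. Fix any Euclidean realization of $\cP$ and pick a generic affine subspace $L$ of complementary dimension disjoint from $\cP$ (the paper writes dimension $d-i$; the dimension count shows $d-i-1$ is what is meant). Genericity gives that $L$ meets the affine hull of every $(i+1)$-face in a single point, since $(d-i-1)+(i+1)-d=0$, and misses the affine hull of every $i$-face, since $(d-i-1)+i-d<0$. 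Now take a long, thin ellipsoid $\cE$ in a neighborhood of $L$, large enough to contain the finitely many intersection points but thin enough to remain disjoint from the finitely many $i$-face hulls. The affine transformation sending $\cE$ to $\theBall$ then yields a weakly $(i,i+1)$-scribed realization. This is exactly why weak $(i,j)$-scribability with $i<j$ is ``very weak'': the one-dimension gap lets a generic complementary subspace thread all $(i+1)$-face hulls while missing all $i$-face hulls, and the affine freedom in choosing the ellipsoid does the rest --- no deformation of $\cP$, and no uniform estimates, are ever needed.
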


\medskip

The paper is organized as follows: Section~\ref{sec:prelim} is dedicated to
introducing the set-up for our scribability problems: polyhedral cones in Lorentz
space and spherical polytopes.  The different scribability problems that we
work with are presented in Section~\ref{sec:definition}.
Section~\ref{sec:stacked} contains our results about stacked and truncated
polytopes, and Section~\ref{sec:neighborly} contains those about cyclic and
neighborly polytopes.  An alternative technique for constructing polytopes that
are not $(i,j)$-scribable is given in Section~\ref{sec:stamp}. Finally, in
Section~\ref{sec:open}, we present some open problems and conjectures.

\subsection*{Acknowledgements}

We want to thank Karim A. Adiprasito, Ivan Izmestiev and G\"unter M. Ziegler
for sharing their useful suggestions and insights during many interesting
conversations on these topics.

\section{Lorentzian view of polytopes}\label{sec:prelim}
Classical scribability problems only consider bounded convex polytopes in
Euclidean space.  To define polarity properly in this setup, one must assume
that the polytope contains the origin in its interior.  This presents the major
difficulty in Schulte's work on weak $k$-scribability, and also leads to a
minor flaw in his proof regarding strong $k$-scribability (see
Remark~\ref{rmk:schulte}). 

We find it more natural and convenient to work with spherical polytopes, which
arise from pointed polyhedral cones in Lorentzian space.  This section
is dedicated to the introduction of this setup.  The main advantage is that,
for spherical polytopes, polarity is always well-defined and well-behaved.
This facilitates the study of weak scribability and enables us to obtain
Theorem~\ref{thm:weakk}.  At the same time, as we will see in
Lemma~\ref{lem:equivalent}, strong scribability in spherical space and in
Euclidean space are equivalent, so the new setting is compatible with previous
studies.  In fact, the presence of spherical geometry is necessary only in few
occasions (e.g.\ Example~\ref{ex:weak}).

\subsection{Convex polyhedral cones in Lorentzian space}

A (closed) non-empty subset of $\RR^{d+1}$ is a \defn{convex cone} if it is
closed under positive linear combinations.  A convex cone is \defn{pointed} if
it does not contain any linear subspace of $\RR^{d+1}$.  A convex cone is
\defn{polyhedral} if it is the conical hull of finitely many vectors
in~$\RR^{d+1}$, i.e.\ a set $\cK$ of the form 
\[
	\cK=\cone(V):=\set{\sum \lambda_i v_i}{\lambda_i\geq 0, v_i\in V}
\]
for some finite set $V\subset\RR^{d+1}$. 

Let $\cK$ be a convex cone and let $H\subset\RR^{d+1}$ be a linear hyperplane
disjoint from the interior of~$\cK$. We say that $H$ is \defn{supporting} for
$\cK$ if $H\cap\cK$ contains non-zero vectors. In this case, the \emph{closed}
half-space $H^-$ that contains~$\cK$ is called a \defn{supporting half-space}.

If $\cK$ is polyhedral, the intersections of $\cK$ with its supporting
hyperplanes are its \defn{faces}.  The set of faces ordered by inclusion forms
the \defn{face lattice} of $\cK$. A polyhedral cone $\cK'$ is
\defn{combinatorially equivalent} to $\cK$ if their face lattices are
isomorphic. In this case, we also say $\cK$ and $\cK'$ have the same
\defn{combinatorial type}, or that $\cK'$ is a \defn{realization} of $\cK$.  

\medskip

The \defn{Lorentzian space} $\Lor^{1,d}$ is $\RR^{d+1}$ endowed with the
Lorentzian scalar product:
\[
	\Lprod{\bx}{\by}:=-x_0y_0+x_1y_1+\dots+x_dy_d,\quad \bx,\by\in\RR^{d+1}.
\]
The \defn{light cone} of $\Lor^{1,d}$ is the pointed convex cone
\[
	\theLightCone := \set{\bx\in\Lor^{1,d}}{\Lprod{\bx}{\bx}\le 0,\ x_0\ge 0}.
\]
\begin{remark}
	In the literature, the term ``\emph{light cone}'' usually denotes the set of
	vectors $\bx$ such that $\Lprod{\bx}{\bx}=0$, i.e.\ the boundary of
	$\theLightCone \cup -\theLightCone$, which is not a convex cone. 
\end{remark}

We work with the polarity induced by the Lorentzian scalar product.  Hence, the
\defn{polar} of a set $X \subseteq \Lor^{1,d}$ is the convex cone
\[
	\polar{X} :=\set{ \bx \in \Lor^{1,d}}{\Lprod{\bx}{\by} \le 0 \text{ for all }
\by \in \cK}.
\]
We also define the \defn{orthogonal companion} $X^\perp$ as the polar of
its linear span
\[
	X^\perp := \polar{\Span(X)} = \set{ \bx }{ \langle \bx, \by \rangle = 0 \text{
	for all }\by \in X }.
\]
The light cone is self-polar, i.e.\ $\polar{\theLightCone}=\theLightCone$.  If
$\cV \subset \Lor^{1,d}$ is a linear subspace, then $\polar{\cV}=\cV^\perp$.

If $\cK$ is a pointed polyhedral cone, then $\polar{\cK}$ is
\defn{combinatorially dual} to~$\cK$. That is, the face lattice of
$\polar{\cK}$ is obtained from that of $\cK$ by reversing the inclusion
relations.  Indeed, to each face $F$ of $\cK$ is associated a face $\ass{F}$ of
$\polar{\cK}$, which we call the \defn{associated face} of $F$, given by
\[
	\ass{F} := F^\perp \cap \polar{\cK}.
\]

We recall some standard properties of polarity.
\begin{lemma}\label{lem:orthogonal}
	A subspace $\cV$ is disjoint from $\theLightCone$ if and only if $\cV^\perp$
	intersects the interior of $\theLightCone$, and $\cV$ is tangent to
	$\theLightCone$ (that is, $\cV \cap \theLightCone$ consists of a single ray) if and
	only if $\cV^\perp$ is tangent to $\theLightCone$.
\end{lemma}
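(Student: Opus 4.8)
The plan is to reduce both equivalences to a statement about the signature of the Lorentzian form restricted to a subspace, and then to track how this signature transforms under $\cV \mapsto \cV^\perp$. Throughout I write $Q(\bx) := \Lprod{\bx}{\bx}$, a nondegenerate form on $\Lor^{1,d}$ with positive index $d$ and negative index $1$. Interpreting ``disjoint'' as $\cV \cap \theLightCone = \{0\}$, and recalling that $\cV = -\cV$ while $\theLightCone$ is the future half of $\{\bx : Q(\bx)\le 0\}$, I would first match the three possible geometric positions of $\cV$ to the signature $(a_+, a_-, a_0)$ of $Q|_\cV$, where $a_0 = \dim(\cV \cap \cV^\perp)$ is the dimension of the radical: $\cV$ is disjoint from $\theLightCone$ iff $Q|_\cV$ is positive definite ($a_- = a_0 = 0$); it meets the interior of $\theLightCone$ iff $Q|_\cV$ admits a timelike vector ($a_- = 1$); and it is tangent iff $Q|_\cV$ is positive semidefinite and degenerate. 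The point requiring care is that tangency forces $a_0 = 1$: for a positive semidefinite form the null vectors are exactly the radical, and since two non-parallel future-directed null vectors have strictly negative Lorentzian product, the mutually orthogonal null vectors of the radical must all be parallel, so $a_0 \le 1$; the radical is then a single null line meeting $\theLightCone$ in one ray.

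Next I would establish the linear-algebra engine: for the ambient form of positive index $d$ and negative index $1$, the radical of $Q|_{\cV^\perp}$ equals the radical of $Q|_\cV$, both being $\cV \cap \cV^\perp$ (here I use $(\cV^\perp)^\perp = \cV$), and the remaining indices transform by $(a_+, a_-, a_0) \longmapsto (d - a_+ - a_0,\ 1 - a_- - a_0,\ a_0)$. To prove this I would pick an isotropic complement $R'$ of the radical $R$, so that $R \oplus R'$ is hyperbolic, and decompose $\Lor^{1,d} = (R \oplus R') \perp A \perp B$ orthogonally with $A$ a nondegenerate complement of $R$ inside $\cV$; a short computation then identifies $\cV^\perp = R \oplus B$, and comparing the signatures of the three orthogonal summands with the ambient signature gives the transformation rule.

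Finally I would feed the three cases into this rule, the key leverage being that only one negative index is available. A disjoint $\cV$ of signature $(m,0,0)$ is sent to $(d-m,1,0)$, so $\cV^\perp$ contains a timelike vector and hence meets $\interior\theLightCone$; a tangent $\cV$ of signature $(m-1,0,1)$ is sent to $(d-m,0,1)$, which is again positive semidefinite with one-dimensional radical, i.e.\ tangent. (For completeness, a cutting $\cV$ of signature $(m-1,1,0)$ is sent to the positive definite $(d-m+1,0,0)$, recovering disjointness.) Both equivalences of the lemma now follow, since $\cV \mapsto \cV^\perp$ is an involution. I expect the main obstacle to be the bookkeeping in the degenerate case — verifying that the tangent radical is exactly one-dimensional and that the index transformation remains valid when $\cV$ is degenerate; once these are in place the non-degenerate cases are immediate.
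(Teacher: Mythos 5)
The paper offers no proof of Lemma~\ref{lem:orthogonal} to compare against: it is recalled as a ``standard property'' of Lorentzian polarity and stated without argument. Judged on its own, your proof is correct and complete, and it is the natural signature-theoretic argument. The trichotomy matching disjoint/tangent/cutting to $Q|_\cV$ positive definite / positive semidefinite degenerate / containing a timelike vector is right; the one genuinely delicate point --- that tangency forces a one-dimensional radical --- is exactly the point you single out, and your argument (null vectors of a semidefinite form lie in the radical, and mutually orthogonal future null vectors are parallel) is sound. The index transformation $(a_+,a_-,a_0)\mapsto(d-a_+-a_0,\,1-a_--a_0,\,a_0)$ via the decomposition into the radical, a hyperbolic partner, and nondegenerate complements is the standard Witt-style bookkeeping, your identification $\cV^\perp=R\oplus B$ is correct, and feeding the three signatures through it, together with $(\cV^\perp)^\perp=\cV$, does yield both equivalences. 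Note also that your rule silently encodes the useful fact that $a_-=1$ forces $a_0=0$ (the second entry cannot go negative), which keeps the three cases mutually exclusive.

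One caveat is worth recording. Your classification replaces the lemma's parenthetical definition of tangency by the geometrically intended one, and this is not a cosmetic change: read literally, ``$\cV\cap\theLightCone$ consists of a single ray'' is also satisfied by a timelike line $\cV=\Span((1,0,\dots,0))$, for which $\cV^\perp=\{x_0=0\}$ meets $\theLightCone$ only at the origin. So the lemma as literally stated fails in this degenerate case, and so does your asserted equivalence ``tangent iff $Q|_\cV$ is positive semidefinite and degenerate.'' The reading your trichotomy actually proves --- and the one the paper clearly intends --- is that $\cV$ meets $\theLightCone$ in more than the origin but avoids its interior, equivalently that the intersection is a single ray lying on the boundary of $\theLightCone$. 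This is a flaw in the paper's phrasing rather than in your argument, but since your proof opens by matching ``the three possible geometric positions'' to signatures, you should say explicitly that these positions are taken to be mutually exclusive; with that one sentence added, the proof is airtight.
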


\begin{lemma}\label{lem:polar}
  If $\bx$ is a non-zero vector on the boundary of pointed polyhedral cone
  $\cK$ and belongs to a face $F$, then $\bx^\perp$ is a supporting hyperplane
  for $\polar{\cK}$ that contains the associate face $\ass{F}$, and
  $\polar{\bx}$ is the corresponding supporting half-space.
\end{lemma}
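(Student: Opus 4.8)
The plan is to deduce the entire statement from the single set inclusion $\polar{\cK}\subseteq\polar{\bx}$. First I would note that since $\bx\in\cK$, the defining condition of the polar cone applied to the vector $\bx$ gives $\Lprod{\bz}{\bx}\le 0$ for every $\bz\in\polar{\cK}$. This is precisely the assertion $\polar{\cK}\subseteq\polar{\bx}$, where $\polar{\bx}=\set{\bz}{\Lprod{\bz}{\bx}\le 0}$ is the closed half-space whose bounding hyperplane is $\bx^\perp$. This already singles out $\bx^\perp$ and $\polar{\bx}$ as the candidate supporting hyperplane and supporting half-space, and since the inclusion shows $\polar{\cK}$ lies in $\polar{\bx}$, the latter is indeed the half-space containing $\polar{\cK}$.

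Next I would verify the two requirements in the definition of a supporting hyperplane for $\polar{\cK}$. For disjointness from the interior: because $\cK$ is pointed, its polar $\polar{\cK}$ is full-dimensional, so by monotonicity of the interior the inclusion $\polar{\cK}\subseteq\polar{\bx}$ yields $\interior\polar{\cK}\subseteq\interior\polar{\bx}=\set{\bz}{\Lprod{\bz}{\bx}<0}$, a set disjoint from $\bx^\perp$; hence $\bx^\perp$ meets $\polar{\cK}$ only along its boundary. It remains to produce a nonzero vector in $\bx^\perp\cap\polar{\cK}$, which I would obtain from the associated face in the next step.

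Finally, for the containment of $\ass{F}$: since $\bx\in F$ we have $\Span\{\bx\}\subseteq\Span F$, hence $F^\perp\subseteq\bx^\perp$, and therefore $\ass{F}=F^\perp\cap\polar{\cK}\subseteq\bx^\perp\cap\polar{\cK}$. This proves the containment claim and, as soon as $\ass{F}$ is nonzero, also supplies the missing nonzero vector. The one delicate point — and the main obstacle — is exactly this nonvanishing. I would handle it by using that $\bx$ lies on the boundary of $\cK$, so it is contained in a proper face; taking $F$ (or the carrier of $\bx$) to be proper, the order-reversing bijection between the faces of $\cK$ and those of $\polar{\cK}$ sends the improper face $\cK$ to the zero face and therefore sends a proper $F$ to a nonzero face $\ass{F}$. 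Equivalently, the supporting hyperplane theorem at the boundary point $\bx$ directly furnishes a nonzero $\bz\in\bx^\perp\cap\polar{\cK}$. This settles the second requirement and completes the proof.
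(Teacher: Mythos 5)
The paper never proves Lemma~\ref{lem:polar}: it is stated under ``we recall some standard properties of polarity,'' so there is no internal proof to compare against, and your argument must be judged on its own. It is correct. The chain $\bx\in\cK \Rightarrow \polar{\cK}\subseteq\polar{\bx}$ is immediate from the definition of the polar; monotonicity of the topological interior then gives $\interior\polar{\cK}\subseteq\interior\polar{\bx}$, disjoint from $\bx^\perp$ (note your full-dimensionality remark is not actually needed for this step -- $A\subseteq B$ implies $\interior A\subseteq\interior B$ unconditionally); and $\bx\in F$ gives $F^\perp\subseteq\bx^\perp$, hence $\ass{F}=F^\perp\cap\polar{\cK}\subseteq\bx^\perp\cap\polar{\cK}$, which is the containment claim. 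You are also right that the only delicate point is producing a nonzero vector of $\polar{\cK}$ in $\bx^\perp$. Of your two routes, prefer the second: the supporting hyperplane theorem at the boundary point $\bx$, together with the standard observation that a supporting hyperplane of a cone at a nonzero boundary point can be taken linear (its normal, rewritten via the non-degenerate Lorentzian form as some $\bz$ with $\Lprod{\bz}{\cdot}$ the supporting functional, then lies in $\polar{\cK}\cap\bx^\perp$). Your first route, invoking the order-reversing bijection $F\mapsto\ass{F}$, is admissible only because the paper asserts combinatorial duality immediately before the lemma; in most developments that duality is \emph{proved} using precisely this lemma, so leaning on it risks circularity, and the supporting-hyperplane argument is the one to keep.
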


Let $F$ be a $k$-dimensional face of a pointed polyhedral cone $\cK$.  The
\defn{face figure} of $F$, denoted by $\cK/F$, is the projection of $\cK$ onto
the quotient space $\cK/\Span{F}$.  The combinatorial type of $\cK/F$ is
induced by the faces of $\cK$ that contain $F$ as a proper face.  So $\cK/F$
is combinatorially dual to the associated face $\ass{F}$.

\subsection{Spherical, Euclidean and hyperbolic polytopes}\label{ssec:spheuc}

Let $\Sph^d$ be the $d$-dimensional spherical space, identified with the set
\[
	\Sph^d=\set{\bx \in \RR^{d+1}}{\|\bx\|_2=1}.
\]
A \defn{spherical polytope} in $\Sph^d$ is an intersection of finitely many
hemispheres that does not contain any antipodal points.  Every pointed
polyhedral cone $\cK \subset \RR^{d+1}$ corresponds to a \defn{spherical
polytope}~$\cP$ in $\Sph^d$ given by $\cP = \cK \cap \Sph^d$, and every
spherical polytope arises this way.  The face lattice of $\cP$ is inherited
from $\cK$, and the polar of $\cK$ induces the polar of $\cP$ given by
$\polar{\cP} := \polar{\cK} \cap \Sph^d$.  Note that a hyperplane in $\Sph^d$
is the intersection $H \cap \Sph^d$ where $H$ is a linear hyperplane in
$\RR^{d+1}$, and a half-space in $\Sph^d$ is a hemisphere.  The light cone
appears as a spherical cap $\theBall := \theLightCone \cap \Sph^d$, whose
boundary is a $(d-1)$-sphere ``at latitude $45^\circ$N'', which we denote by
$\theSphere := \partial\theBall$ to avoid confusion with the ambient $d$-sphere
$\Sph^d$.

Every pointed polyhedral cone $\cK \subset \RR^{d+1}$ admits a
\defn{transversal hyperplane}, i.e.\ an affine hyperplane~$H$ intersecting
every ray of $\cK$.  If we identify $H$ with the Euclidean space $\Euc^d$, then
the intersection $\cP = \cK \cap H$ is a bounded convex $d$-polytope in
$\Euc^d$.  Conversely, every $d$-dimensional Euclidean polytope can be
\defn{homogenized} to the $(d+1)$-dimensional pointed polyhedral cone
$\homog(\cP) := \set{(\lambda,\lambda\bx)}{\bx \in \cP,\lambda\geq 0}$.  Again,
the face lattice of $\cP$ is inherited from $\cK$, and the polar of $\cK$
induces the polar of $\cP$.  Hence, from the combinatorial point of view, there
is no difference between spherical polytopes and Euclidean polytopes.

We usually identify $\Euc^d$ with the fixed hyperplane $H_0=\{\bx \mid x_0 =
1\}$, so that the light cone~$\theLightCone$ appears as the standard unit ball
in $\Euc^d$.  We abuse the notation and denote the Euclidean unit ball by
$\theBall$ and its boundary by $\theSphere$, as its spherical counterparts.
Bounded Euclidean $d$-polytopes in~$\Euc^d$ correspond to spherical
$d$-polytopes contained in the hemisphere $x_0>0$ of $\Sph^d$ through central
(gnomonic) projection from the origin.  Furthermore, the polarity induced by
the Lorentzian scalar product coincides with the classical polarity in
Euclidean space.  That is:
\[
	\polar{\cP}:=\set{\bx}{\Eprod{\bx}{\by} \le 1 \text{ for all } \by \in P}
\]
where $\Eprod{\cdot}{\cdot}$ denotes the Euclidean inner product.  For an
affine subspace $H \subset \Euc^d$, we use $H^\perp$ to denote the \defn{polar
subspace} 
\[
	H^\perp := \{ \bx \in \Euc^d \mid \Eprod{\bx}{\by} = 1 \text{ for all } \by
\in H \},
\]
which corresponds to the orthogonal companion in $\Lor^{1,d}$.

The unit ball $\theBall$ in the Euclidean space $\Euc^d$ can also be seen as
the Klein model of the hyperbolic space $\Hyp^d$, then $\cP \cap \theBall$
is a hyperbolic polytope.  In the current paper, polytopes of interest have no
vertex in the interior of $\theBall$, so they are not compact, or even of
infinite volume in the hyperbolic space.  The hyperbolic view is very useful
for the study of stacked polytopes in Section~\ref{sec:stacked}.  In
particular, we will make use of hyperbolic reflection groups and hyperbolic
dihedral angles for our proofs.  Readers unfamiliar with hyperbolic polytopes
are referred to~\cite{vinberg1993} or~\cite{ratcliffe2006}.

\defn{Lorentz transformations} are those linear transformations of~$\RR^{d+1}$
that preserve $\theLightCone$.  Lorentz transformations of $\Lor^{1,d}$
correspond to projective transformations of $\Euc^d$ that preserve
$\theSphere$, or M\"obius transformations of $\Sph^d$.  If we regard the
interior of $\theBall$ as the hyperbolic space $\Hyp^d$, then the M\"obius
transformations correspond to isometries of $\Hyp^d$.

\begin{remark}
	The term ``\emph{Lorentz transformation}'' is usually used to denote those
	linear transformations that preserve the Lorentzian inner product.  What we
	call a Lorentz transformation here, i.e. those preserving $\theLightCone$,
	correspond to what is usually known as the \emph{orthochronous Lorentz
	transformations}.
\end{remark}

We use the notation \defn{$\Span(X)$} to denote the linear span in $\Lor^{1,d}$,
the spherical span in $\Sph^d$, and the affine span in $\Euc^d$, depending on the
context.  

\section{Definitions and properties}\label{sec:definition}
\subsection{Strong $k$-scribability}

The classical scribability problems go back to Steiner~\cite{steiner1832}, and
were studied in full generality by Schulte~\cite{schulte1987}. We will present
them in terms of spherical polytopes, but as we will see soon, this formulation
is equivalent to the classical setup.

Consider a polytope $\cP\subset\Sph^d$ and let $F$ be a face of $\cP$.  We say
that $F$ is \defn{tangent} to $\theSphere$ if $\relint(F) \cap \theSphere$
consists of a single point, which is called the \defn{tangency
point} of $F$ and denoted by $t_F$.

\begin{definition}\label{def:classical}
	A {spherical} polytope $\cP$ is \defn{(strongly) $k$-scribed} if every
	$k$-face of $\cP$ is tangent to~$\theSphere$. A combinatorial type
	of polytope is \defn{(strongly) $k$-scribable} if it has a (strongly) 
	$k$-scribed realization (as a spherical polytope).
\end{definition}

Here, the adjective \emph{strong} is used to distinguish from the weak
scribability, which will be defined later.  We often omit the adjective since
this type of scribability problem is of the earliest and greatest interest.  We
also say that a polytope $\cP$ is \defn{inscribable}, \defn{edge-scribable},
\defn{ridge-scribable} or \defn{circumscribable} if it is $0$-,  $1$-, $(d-2)$-
or $(d-1)$-scribable, respectively.

If $\cP\subset\Sph^d$ is contained in the upper hemisphere $x_0>0$, then it
corresponds to a bounded polytope in $\Euc^d$ (identified with $H_0$), and our
definition of ``\emph{$k$-scribed}'' coincides with that of
Schulte~\cite{schulte1987}.  However, since not every spherical polytope is in
$x_0>0$, it is not straightforward to see that every $k$-scribable spherical
polytope has a $k$-scribed realization in Euclidean space.

\begin{lemma}\label{lem:equivalent}
	If a $d$-polytope $\cP \subset \Sph^d$ is $k$-scribable, then $\cP$ admits
	a $k$-scribed realization in the Euclidean space $\Euc^d$ that is
	bounded and contains the center of $\theSphere$ in its interior.
\end{lemma}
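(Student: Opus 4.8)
The plan is to realize everything at the level of the cone $\cK$ with $\cP=\cK\cap\Sph^d$, and to find an orthochronous Lorentz transformation $T$ that carries $\cP$ into the open upper hemisphere $\{x_0>0\}$ with the center of $\theSphere$ in its interior; intersecting $T\cK$ with the transversal hyperplane $H_0=\{x_0=1\}=\Euc^d$ then produces the desired bounded Euclidean realization. Since orthochronous Lorentz transformations preserve $\theSphere$, the relation of being tangent to it, and the face lattice, $T\cP$ remains $k$-scribed, so only the geometric placement is at issue. The center of $\theSphere$ is the ray spanned by the future timelike unit vector $\mathbf{e}_0=(1,0,\dots,0)$.

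First I would recast the two requirements symmetrically. For an orthochronous $T$ set $\bw:=T^{-1}\mathbf{e}_0$, again a future timelike unit vector. Using that $T$ preserves the Lorentzian form and that $x_0=-\sprod{\mathbf{e}_0}{\cdot}$, one gets $(T\bx)_0=-\sprod{\bw}{\bx}$, so that $T\cK\subseteq\{x_0>0\}\cup\{0\}$ (i.e.\ $T\cP$ bounded in $\Euc^d$) holds if and only if $\sprod{\bw}{\bx}<0$ for all $\bx\in\cK\setminus\{0\}$, that is $\bw\in\interior\polar\cK$; while $\mathbf{e}_0\in\interior(T\cK)$ (the center lies in the interior of $T\cP$) holds if and only if $\bw\in\interior\cK$. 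Thus the whole statement reduces to producing $\bw\in\interior\cK\cap\interior\polar\cK$ and taking any $T$ with $T\bw=\mathbf{e}_0$. Any such $\bw$ is automatically timelike (take $\bx=\bw$ in the first condition), and it must be future directed, because for the tangency point $t_F$ (a future null vector) of any $k$-face one has $\sprod{\bw}{t_F}\le 0$, which excludes the past cone; hence an orthochronous $T$ indeed exists.

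It then remains to prove $\interior\cK\cap\interior\polar\cK\ne\emptyset$. Assuming otherwise, I would separate these two nonempty (as $\cK$ is pointed and full-dimensional) disjoint open convex cones by a linear hyperplane through the origin, obtaining $\bn\ne 0$ with $\cK\subseteq\{\sprod{\bn}{\cdot}\ge 0\}$ and $\polar\cK\subseteq\{\sprod{\bn}{\cdot}\le 0\}$. The crucial fact is that every tangency point lies in $\cK\cap\polar\cK$: if $F$ is a $k$-face tangent at $t_F$, then $\Span F$ is tangent to $\theLightCone$ along the ray $\RR_{\ge 0}t_F$ (the radical of the restricted form is exactly $\RR t_F$), so by Lemma~\ref{lem:orthogonal} the orthogonal companion $\Span\ass F=(\Span F)^\perp$ is tangent along the same ray; since $\polar\cP$ is correspondingly $(d-1-k)$-scribed, its face $\ass F$ is tangent with tangency point $t_{\ass F}=t_F$, whence $t_F\in\ass F\subseteq\polar\cK$ in addition to $t_F\in F\subseteq\cK$. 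Consequently $\sprod{\bn}{t_F}=0$ for every $k$-face $F$, so all tangency points lie in the proper face $\Phi:=\cK\cap\bn^\perp$ of $\cK$; as $t_F\in\relint F$, this forces $F\subseteq\Phi$. But for $0\le k\le d-1$ every extreme ray of $\cK$ lies in some $k$-face, so $\cK\subseteq\Phi$, which is impossible for a proper face. This contradiction yields $\interior\cK\cap\interior\polar\cK\ne\emptyset$ and finishes the proof.

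I expect the main obstacle to be precisely the self-polarity of the tangency points, $t_F\in\cK\cap\polar\cK$, which is what turns the separating functional $\bn$ into one vanishing on every tangency point. This rests on reading tangency of the face $F$ as tangency of $\Span F$ to the light cone, so that Lemma~\ref{lem:orthogonal} applies and the polar face $\ass F$ is tangent at the same point. That reading is automatic when $k\ge 2$, and dually when $k\le d-3$, but both readings degenerate for one-dimensional faces; hence the single borderline case $(d,k)=(3,1)$ lies outside this argument and must be handled separately, where one may simply invoke the classical midsphere (edge-scribed) realization, which is already bounded and contains the center of $\theSphere$ in its interior.
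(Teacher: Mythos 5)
Your reduction of the lemma to the single statement $\interior\cK\cap\interior\polar{\cK}\neq\emptyset$ (followed by an orthochronous Lorentz transformation sending a common interior point to the center ray) is a genuinely different and rather elegant framing: the paper instead chooses a point $x\in\interior(\cP\cap\theBall)$ very close to a tangency point $t_F$, inside an auxiliary subspace through $F^\perp$ and the center, and verifies by hand that the projective transformation $T_x$ preserves boundedness. Your separation argument would indeed finish the proof --- \emph{granted} the fact it hinges on, namely that every tangency point satisfies $t_F\in\cK\cap\polar{\cK}$.

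That fact, however, is exactly where your proof has a genuine gap. You obtain $t_F\in\ass{F}\subseteq\polar{\cK}$ by invoking that ``$\polar{\cP}$ is correspondingly $(d-1-k)$-scribed'', but in this framework that is not a formal consequence of anything available: the paper's polarity statement (Lemma~\ref{lem:property}(ii)) concerns strongly $(i,j)$-scribed polytopes, i.e.\ faces that strongly cut and strongly \emph{avoid} $\theSphere$, and to apply it to a $k$-scribed polytope one needs the implication ``tangent $\Rightarrow$ strongly avoiding'' --- that is, that some supporting hyperplane of $\cP$ cutting out $F$ has $\theBall$ on its inner side, equivalently $\cK\subseteq\polar{t_F}$, equivalently $t_F\in\polar{\cK}$, which is the very claim at stake. (Lemma~\ref{lem:reduce} gives only the reverse implication; and Lemma~\ref{lem:polar} applied to $t_F\in\partial\cK$ yields $\polar{\cK}\subseteq\polar{t_F}$, which merely restates $t_F\in\cK$.) So this step is circular. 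The paper closes precisely this gap with a direct argument that your proposal is missing: for every $(k+1)$-face $G\supseteq F$, the barycenter of the tangency points of the $k$-faces of $G$ is an interior point of $\theBall$ lying in $\relint G$, which forces $G$ --- and then every face containing $F$, hence $\cP$ itself --- into the half-space $\polar{t_F}$. Some such argument, using that \emph{all} $k$-faces are tangent, is indispensable before your separation step can run. Note also that your carve-out of the case $(d,k)=(3,1)$ addresses only the subsidiary point of reading tangency of $F$ as tangency of $\Span F$ to the light cone; the circularity just described affects all pairs $(d,k)$ alike.
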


\begin{remark}\label{rmk:schulte}
	Before showing that our definition is indeed equivalent to that of
	Schulte~\cite{schulte1987}, let us first point out a related problem in
	Schulte's paper.  

	In~\cite[p.~507f.]{schulte1987}, Schulte uses a footnote
	of~\cite[p.~285]{grunbaum2003} which says that, for any point $x$ in the
	interior of $\theBall$, there is a projective transformation $T_x$ that
	preserves $\theSphere$ and sends $x$ to the center of $\theSphere$. Schulte
	argued that, if a polytope $\cP$ does not contain the center of $\theSphere$,
	one can send a point $x\in \cP\cap\theBall$ to the center of $\theSphere$
	using $T_x$, and then $T_x\cP$ is a polytope containing the center.

	\begin{figure}[htpb]
		\includegraphics[width=.75\textwidth]{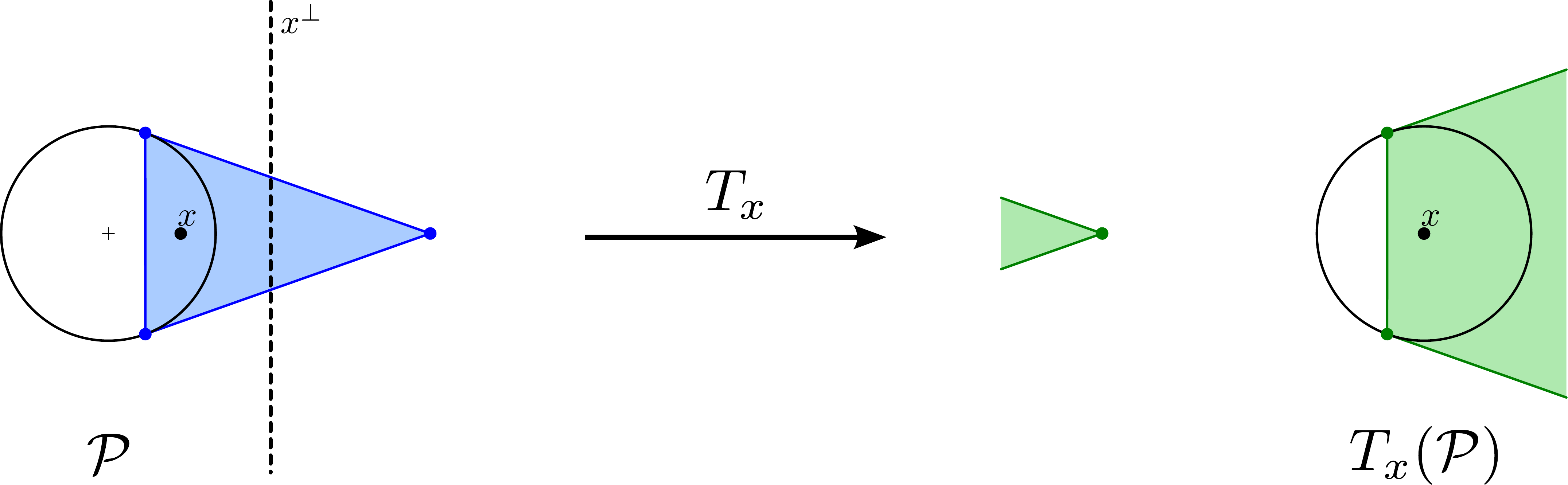}
		\caption{
			$T_x$ destroys the boundedness of the polytope.\label{fig:Schulte}
		}
	\end{figure}

	While this argument is correctly used in~\cite{grunbaum2003}, Schulte did not
	take precautions for the fact that, if the point $x$ is not carefully chosen,
	$T_x\cP$ might be unbounded (or even not connected).  For example, consider
	the triangle $\cP$ in Figure~\ref{fig:Schulte}, constructed by taking the
	convex hull of a point together with the intersections of its polar line with
	the circle.  For any point $x \in \cP \cap \theBall$, the \defn{polar
	hyperplane} $x^\perp$ is a line intersecting the triangle.  As $T_x$ sends
	$x$ to the center, $x^\perp$ is sent to infinity, which destroys the
	boundedness.  

	A correct argument in Euclidean space involves a careful choice of $x$;
	see~\cite[Lemma~6]{eppstein2003}.  Our proof of Lemma~\ref{lem:equivalent}
	uses the same idea, but more caution is required.
\end{remark}

\begin{proof}
	Assume that $\cP$ is $k$-scribed. 
	Now, for each $k$-face $F$ of $\cP$, we denote by $t_F \in \cP$ its tangency
	point. Observe that $F \subset H_F := t_F^\perp$ and $\theBall \subset H_F^-
	:= \polar{t_F}$.  We claim that $H_F^-$ is a supporting half-space for $\cP$.
	Indeed, for any $(k+1)$-face~$G \supset F$, we have $H_F \cap G = F$, but
	$H_F \cap \relint G = \emptyset$ because $G$ contains some interior point of
	$\theBall$ (the barycenter of the tangency points of its $k$-faces). Hence
	$H_F^-$ is supporting for $G$.  Since all the $k$-faces containing $F$ lie at
	the same side of $H_F$, we conclude that $H_F^-$ is supporting for all faces
	that contain $F$, including $\cP$ itself. 
	
	With a projective transformation if necessary, we may assume that $F$ is
	bounded.  Consider the subspace $L_F^\perp$ spanned by $F^\perp$ and the
	center of $\theBall$. It cuts $F$ transversally at $t_F$.  In particular,
	$\cP' = L_F^\perp \cap \cP$ is a $(d-k)$-polytope, $\theBall' = L_F^\perp
	\cap \theBall$ a $(d-k)$-ball, and $t_F$ is a vertex of $\cP'$ tangent
	to~$\theSphere' = \partial \theBall'$.  The half-space $L_F^\perp \cap H_F^-$
	is supporting both $\cP'$ and $\theBall'$.  This allows us to choose a point
	$x \in \interior (\cP' \cap \theBall')$.  Since $L_F^\perp$ is spanned by $x$
	and $F^\perp$, the intersection of $\Span(F)=F^{\perp\perp}$ and $x^\perp$
	coincides with the polar of $L_F^\perp$, which is contained in the equator at
	infinity because the center of $\theBall$ belongs to $L_F^\perp$. Hence,
	$\Span(F)$ and $x^\perp$ are parallel (intersect only at the equator at
	infinity), and $x^\perp$ is disjoint from $F$ as $F$ is bounded.	

	In particular, if $x$ is sufficiently close to $t_F$, then $x^\perp$ is
	disjoint from both $\cP$ and $\theBall$.  Thus the projective transformation
	$T_x$ will send $\cP$ to a bounded $k$-scribed polytope containing the center
	of $\theSphere$.
\end{proof}

As a consequence, $k$-scribability for Euclidean polytopes is equivalent to
$k$-scribability for spherical polytopes.

\subsection{Weak $k$-scribability}

Weak $k$-scribability problems were first asked for $3$-polytopes by Gr\"unbaum
and Shephard~\cite{grunbaum1987}, and then generalized to higher dimensions by
Schulte~\cite[Section~3]{schulte1987}.  By considering spherical polytopes,
the notion of weak scribability is weakened, but has the desired properties with
respect to polarity. 

\medskip

Consider a polytope $\cP\subset\Sph^d$, and let $F$ be a face of $\cP$.  We say
that $F$ is \defn{weakly tangent} to $\theSphere$ if its spherical span is
tangent to $\theSphere$, i.e. if $\Span(F) \cap \theSphere$ consists of a
single point.  In particular, since the spherical span of a point consists of itself 
together with its antipodal point in $\Sph^d$, a vertex is weakly tangent to~$\theSphere$ if
and only if it lies either on $\theSphere$ or on $-\theSphere$.

\begin{definition}\label{def:weak}
	A spherical polytope $\cP$ is \defn{weakly $k$-scribed} if every $k$-face of
	$\cP$ is weakly tangent to $\theSphere$. A combinatorial type of polytope
	is \defn{weakly $k$-scribable} if it has a $k$-scribed realization (as a spherical polytope).
\end{definition}

Definition~\ref{def:weak} is weaker than the Euclidean definition of
Gr\"unbaum--Shephard and Schulte.  The two definitions of strong
$k$-scribability were equivalent thanks to Lemma~\ref{lem:equivalent} which
shows that, for every strongly $k$-scribed polytope~$\cP$, there is a
hemisphere (e.g. $H_F^-$) containing both~$\cP$ and~$\theSphere$.  This is
however not true for weakly $k$-scribed polytopes; the spherical version is
weaker.  

For example, with the Gr\"unbaum--Shephard definition, any weakly inscribed
polytope is also strongly inscribed; see Schulte~\cite{schulte1987}.  This is
not the case with Definition~\ref{def:weak}.  The first example is, again,
given by a stacked polytope. The \defn{triakis tetrahedron}, which is the
polytope obtained by stacking a vertex on top of every facet of a
$3$-dimensional simplex, is a well-known polytope that is not strongly
inscribable~\cite{steinitz1928} (cf.\ \cite[Theorem 13.5.3]{grunbaum2003},
\cite{gonska2013}).  However, it is weakly inscribable.  

\begin{example}\label{ex:weak}
	The triakis tetrahedron is weakly inscribable (in the sense of
	Definition~\ref{def:weak}).
\end{example}

\begin{proof}
	Consider the following eight vectors in $\RR^4$.
	\begin{align*}
  	v_{1,2}&:=(+\sqrt{2},0,\pm 1,1)  & v_{3,4}&:=(+\sqrt{3},\pm \sqrt{2},0,1)\\
  	v_{5,6}&:=(-\sqrt{2},\pm 1,0,1)  & v_{7,8}&:=(-\sqrt{3},0,\pm \sqrt{2},1).
	\end{align*}
	They all satisfy $-x_0^2+x_1^2+x_2^2+x_3^2=0$.  Hence for all $1\le i \le 8$,
	the intersection of the line $\Span(v_i)$ and the light cone $\theLightCone
	\subset \RR^4$ is a single ray.  This means that the intersection of
	$\cK=\cone(v_1,\dots,v_8)$ with $\Sph^3$ is a weakly inscribed $3$-polytope
	$\cP$. Its combinatorial type is that of the triakis tetrahedron.
\end{proof}

Figure~\ref{fig:tetrahedra} illustrates the inscribed configuration.  On the
left, we visualize the intersection of $\cK$ with the hyperplane $x_3=1$;
notice that then the intersection of $\theLightCone \cup -\theLightCone$ with
the hyperplane appears as a $2$-sheet hyperboloid.  On the right, we visualize
it by considering the intersection of $\cK\cup -\cK$ with the hyperplane $x_0=1$;
then we see two connected components, corresponding to $\cK$ and $-\cK$,
respectively.

\begin{figure}[hptb]
	\includegraphics[width=.3\textwidth]{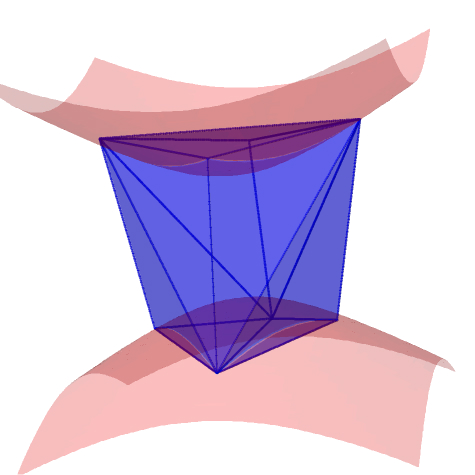}\qquad\qquad\qquad
	\includegraphics[width=.3\textwidth]{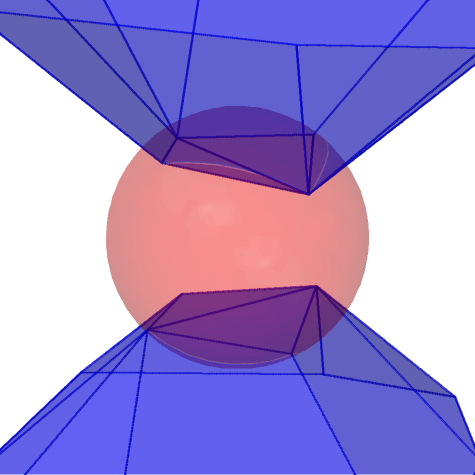}
	\caption{
		The inscribed triakis tetrahedron~$\cP$, view of $\pm\cP$ and $\pm
		\theSphere$ projected on the hyperplane $x_3=1$ (left), and on $x_0=1$
		(right).
	}\label{fig:tetrahedra}
\end{figure}

Nevertheless, Definition~\ref{def:weak} has the desired property: the polar of
a weakly $k$-scribable $d$-polytope is weakly $(d-1-k)$-scribable; see the upcoming
Lemma~\ref{lem:property}\ref{it:polar}.  This is precisely the missing piece
that prevented Schulte from proving Theorem~\ref{thm:weakk}.

\subsection{Strong and weak $(i,j)$-scribability}

In this section, we present the new concept of $(i,j)$-scribability, which
generalizes the concept of $k$-scribability presented above.  Instead of asking
for realizations with the $k$-faces tangent to the sphere, we are interested in
realizations with the $i$-faces ``avoiding'' the sphere and the $j$-faces
``cutting'' it, in such a way that $(i,j)$-scribability reduces to
$i$-scribability when $i=j$.  As before, the definitions come in strong and
weak forms. 

\begin{definition}\label{def:cutavoid}
	Consider a spherical polytope $\cP\subset\Sph^d$, and let $F$ be a proper
	face of~$\cP$.

	We say that~$F$
	\begin{itemize}
		\item \defn{strongly cuts} $\theSphere$ if $\relint(F) \cap \theBall \ne
			\emptyset$;

		\item \defn{weakly cuts} $\theSphere$ if $\Span(F) \cap \theBall \ne
			\emptyset$.

	\end{itemize}

	We say that~$F$
	\begin{itemize}
		\item \defn{strongly avoids} $\theSphere$ if there is a supporting hyperplane $H$ of $\cP$ such that $F = H\cap \cP$ and $\theBall \subset H^-$.

		\item \defn{weakly avoids} $\theSphere$ if $\Span(F) \cap \interior \theBall
			= \emptyset$;

	\end{itemize}
	where $\interior \theBall = \theBall \setminus \theSphere$ is the interior of
	the ball $\theBall$.
\end{definition}

\begin{example}\label{ex:ijstrong}
	Consider the following triangle.
	\begin{center}
 		\includegraphics[width=.4\linewidth]{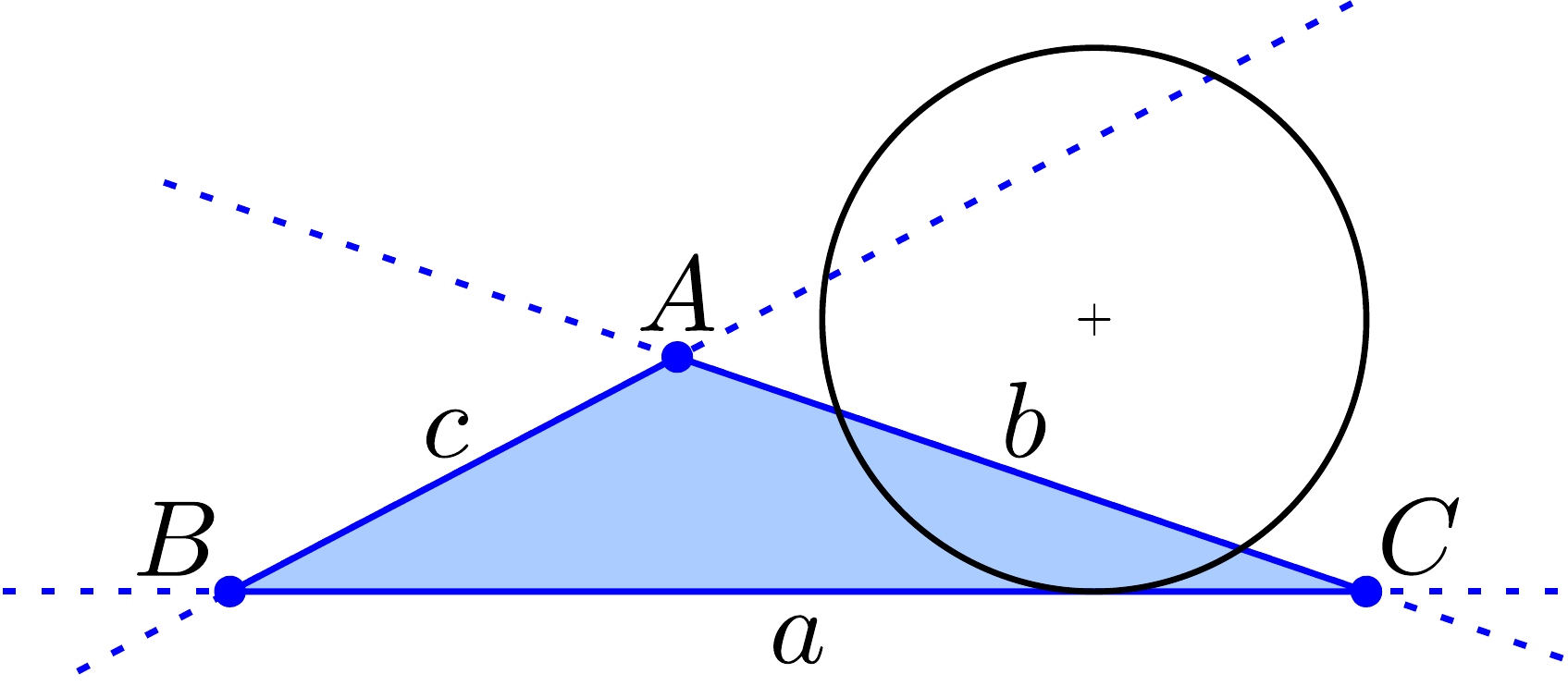}
	\end{center}
	The edge $a$ strongly avoids and cuts $\theSphere$.  The edge $b$ strongly
	cuts $\theSphere$ and does not avoid $\theSphere$ in any sense.  The edge $c$
	weakly cuts $\theSphere$ as shown by the dashed line.  The vertex $A$ weakly
	avoids $\theSphere$.  The vertices $B$ and $C$ strongly avoid $\theSphere$
	and do not cut $\theSphere$ in any sense.
\end{example}

In the following, the phrase ``in the strong (resp.\ weak) sense'' means that
the adverb ``strongly'' (resp.\ ``weakly'') is implied wherever applicable in
the context.

\begin{definition}
	Let $0 \le i \le j \le d-1$.  In the \defn{strong} or \defn{weak} sense, a
	spherical $d$-polytope $\cP \subset \Sph^d$ is \defn{$i$-avoiding} if every
	$i$-face of $\cP$ avoids $\theSphere$, and \defn{$j$-cutting} if every
	$j$-face of $\cP$ cuts $\theSphere$.  We say that $\cP$ is
	\defn{$(i,j)$-scribed} if it is $i$-avoiding and $j$-cutting. A combinatorial
	type of polytope is \defn{$(i,j)$-scribable} if it has an $(i,j)$-scribed
	realization (as a spherical polytope).
	
\end{definition}

\begin{remark}
	These notions should not be confused with a $(m,d)$-scribable polytope in the
	sense of Schulte~\cite{schulte1987}, which means a $d$-polytope that is
	$m$-scribable.
\end{remark}

\begin{remark}
 	There is a third  even weaker  version of scribability that seems reasonable
 	at first sight: faces are \defn{feebly cutting} if they are not strongly
 	avoiding and \defn{feebly avoiding} if they are not strongly cutting.
 	However, observe that we can always find such a feebly $(d-1,0)$-scribed
 	realization of any polytope $P$ by simply taking a realization of $P$ with
 	all the vertices in $-\theBall\subset\Sph^d$, and thus this feeble version of
 	scribability is trivial.

\end{remark}

\subsection{Properties of $(i,j)$-scribability}

A first easy observation is that the strong versions are indeed stronger than
the weak versions. That is, the strong forms of cutting, avoiding and
scribibability imply the weak forms. A second easy observation is that they
contain the classical scribability problems as a special case.

\begin{lemma}\label{lem:reduce}\leavevmode
	In the strong or weak sense, a face that cuts and avoids $\theSphere$ is
	tangent to $\theSphere$.  Consequently, a $(k,k)$-scribed polytope is
	$k$-scribed.
\end{lemma}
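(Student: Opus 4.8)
The plan is to prove the two assertions separately but in parallel for the strong and weak cases, with the single unifying ingredient being the \emph{strict convexity} of the ball $\theBall$ --- equivalently, the fact that the sphere $\theSphere$ contains no nondegenerate segment. This holds both for the Euclidean unit ball and for the spherical cap of angular radius $45^\circ$, so the argument should apply in either model.

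For the strong case, I would assume $F$ strongly avoids and strongly cuts $\theSphere$. Strong avoidance supplies a supporting hyperplane $H$ with $F = H \cap \cP$ and $\theBall \subset H^-$, while strong cutting supplies a point $p \in \relint(F) \cap \theBall$. Since $F \subset H$, this $p$ lies in $H \cap \theBall$, so $H$ meets $\theBall$ while keeping it on one side; that is, $H$ is a supporting hyperplane of $\theBall$. By strict convexity, $H \cap \theBall$ is a single point $t \in \theSphere$. Then $\relint(F) \cap \theBall \subseteq H \cap \theBall = \{t\}$, and being nonempty it equals $\{t\}$; since $t \in \theSphere$ this gives $\relint(F) \cap \theSphere = \{t\}$, which is exactly tangency.

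For the weak case, I would assume $F$ weakly avoids and weakly cuts. Weak avoidance means $\Span(F) \cap \interior \theBall = \emptyset$, so the convex set $\Span(F) \cap \theBall$ is contained in the boundary $\theSphere$, and weak cutting makes it nonempty. A nonempty convex subset of $\theSphere$ is a single point, again by strict convexity (any two distinct points of $\theSphere$ span a segment dipping into $\interior \theBall$), so $\Span(F) \cap \theSphere$ is a single point and $F$ is weakly tangent. The one routine point to check here is convexity of $\Span(F) \cap \theBall$ in the spherical model: as $\theBall$ lies in an open hemisphere, the intersection of a great subsphere with this convex cap is geodesically convex, so the segment argument goes through.

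The ``consequently'' clause is then immediate: a $(k,k)$-scribed polytope is by definition $k$-avoiding and $k$-cutting, so every $k$-face both avoids and cuts $\theSphere$ and hence, by the first part, is tangent; thus the polytope is $k$-scribed (strongly from the strong hypotheses, weakly from the weak ones). I do not anticipate a serious obstacle, since the whole content reduces to strict convexity of $\theBall$; the only care needed is in the spherical model, where one must confirm that caps of radius below $\pi/2$ have strictly convex boundary and that intersections of great subspheres with such caps are convex.
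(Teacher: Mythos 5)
Your proof is correct. Note that the paper states Lemma~\ref{lem:reduce} with no proof at all---it is offered as an ``easy observation'' right after the definitions---so there is no authorial argument to compare against; your reduction of both cases to strict convexity of $\theBall$ is exactly the natural way to fill this in, and it is carried out soundly: in the strong case $\relint(F)\cap\theBall\subseteq H\cap\theBall=\{t\}$ forces $\relint(F)\cap\theSphere=\{t\}$, in the weak case $\Span(F)\cap\theBall$ is a nonempty convex subset of $\theSphere$ and hence a point, and you correctly flag the only spherical subtlety (the $45^\circ$ cap lies in an open hemisphere, so it contains no antipodal pair and geodesic arcs between boundary points dip into its interior).
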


We say that a face is \defn{strictly} cutting (resp.\ avoiding) $\theSphere$ if
it is cutting (resp.\ avoiding) $\theSphere$ but not tangent to $\theSphere$.
In the strong sense, it is possible that a face is neither cutting nor avoiding
$\theSphere$ at the same time.  This is however not possible in the weak sense.

The following lemma collects the essential properties of $(i,j)$-scribability,
which are repeatedly used in the upcoming proofs.  It is a generalized version
of the main results in~\cite{schulte1987}, Theorems~1 and~2.  It implies also
the weak version of \cite[Theorem~1]{schulte1987}, needed for proving Theorem~\ref{thm:weakk}.

\begin{lemma}\label{lem:property}
	Let $d\ge 1$ and $0\le i,j\le d-1$.  In the strong or weak sense, if a
	$d$-polytope $\cP$ is $(i,j)$-scribable, then:
	\begin{enumerate}
		\item\label{it:otherij} $\cP$ is $(i',j')$-scribable for any $i'\leq i$ and
			$j'\geq j$;

		\item\label{it:polar} the polar polytope $\polar{\cP}$ is
			$(d-1-j,d-1-i)$-scribable;

		\item\label{it:facet} if $j\le d-2$, each facet of $\cP$ is an
			$(i,j)$-scribable $(d-1)$-polytope;

		\item\label{it:vf} if $i\ge 1$, each vertex-figure of $\cP$ is an
			$(i-1,j-1)$-scribable $(d-1)$-polytope.
	\end{enumerate}
\end{lemma}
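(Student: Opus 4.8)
The plan is to prove each of the four statements by unwinding the definitions and exploiting the self-duality built into Definition~\ref{def:cutavoid}. The key structural fact, which I would verify at the outset and use throughout, is the pair of polarity correspondences relating a face $F$ of $\cP$ to its associated face $\ass{F}$ of $\polar{\cP}$: if $F$ is an $i$-face then $\ass{F}$ is a $(d-1-i)$-face, and Lemma~\ref{lem:polar} tells us how supporting hyperplanes and polar half-spaces transform. I would also keep Lemma~\ref{lem:orthogonal} at hand, since it translates ``tangent/disjoint/intersecting'' relations between a subspace and $\theLightCone$ into the corresponding relations for the orthogonal companion.

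\textbf{Part (i).} This is the easiest: avoiding and cutting are monotone in opposite directions. First I would observe that if every $i$-face avoids $\theSphere$ (in either sense), then so does every $i'$-face for $i'\le i$, since any $i'$-face is contained in some $i$-face $F$, and $\Span(F')\subseteq\Span(F)$ forces $\Span(F')\cap\interior\theBall=\emptyset$ in the weak case (and a supporting hyperplane argument handles the strong case). Dually, if every $j$-face cuts, then every $j'$-face with $j'\ge j$ cuts, because each $j'$-face contains a $j$-face $F$ with $\Span(F)\subseteq\Span(F')$ and $\relint(F)\subseteq F'$. So I would spell out the two containment directions separately and note they fit together.

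\textbf{Part (ii), the polarity step.} This is the heart of the lemma and where the Lorentzian set-up pays off. I would show that \emph{strongly avoiding dualizes to strongly cutting} and vice versa: if $F$ is an $i$-face that strongly avoids $\theSphere$, witnessed by a supporting hyperplane $H=\bx^\perp$ with $\theBall\subseteq H^-=\polar{\bx}$ and $F=H\cap\cP$, then by Lemma~\ref{lem:polar} the vector $\bx$ lies in the associated face $\ass{F}$ of $\polar{\cP}$, and the condition $\theBall\subseteq\polar{\bx}$ (equivalently $\bx\in\theBall$, since $\theBall$ is self-polar) says exactly that $\ass{F}$ meets $\theBall$ in its relative interior — i.e.\ $\ass{F}$ strongly cuts. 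Running this in reverse gives the converse, and the weak versions follow the same pattern using $\Span$ and orthogonal companions in place of supporting hyperplanes, invoking Lemma~\ref{lem:orthogonal}. Since $i$-faces correspond to $(d-1-i)$-faces under $F\mapsto\ass{F}$, the $i$-avoiding condition on $\cP$ becomes the $(d-1-i)$-cutting condition on $\polar{\cP}$ and the $j$-cutting condition becomes the $(d-1-j)$-avoiding condition, which is precisely the assertion that $\polar{\cP}$ is $(d-1-j,\,d-1-i)$-scribable. \textbf{This polarity bookkeeping is the main obstacle}, mostly because one must check carefully that the self-polarity of $\theLightCone$ converts ``$\theBall\subseteq\polar{\bx}$'' into ``$\bx\in\theBall$'' and hence into the correct relative-interior cutting statement, and that the strong and weak dictionaries line up exactly.

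\textbf{Parts (iii) and (iv).} These follow by restricting to a facet or passing to a vertex-figure, and in fact (iv) can be obtained from (iii) by applying part (ii): a vertex-figure of $\cP$ corresponds to a facet of $\polar{\cP}$, so the vertex-figure statement is the polar of the facet statement after the index shift. For (iii) directly, I would take a facet $\cF$ of $\cP$, which is itself a spherical $(d-1)$-polytope living in the hyperplane $\Span(\cF)$; its $i$-faces and $j$-faces are exactly the $i$- and $j$-faces of $\cP$ contained in $\cF$ (here the hypothesis $j\le d-2$ guarantees the relevant $j$-faces are proper faces of $\cF$), and I would check that the avoiding/cutting status of such a face computed inside the ambient $\Sph^d$ against $\theSphere$ agrees with its status inside $\Span(\cF)\cong\Sph^{d-1}$ against the smaller sphere $\theSphere\cap\Span(\cF)$ — for the weak notions this is immediate since spans are intrinsic, and for the strong notions one restricts the witnessing supporting hyperplane. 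Then (iv) falls out by dualizing (iii) with part (ii), the index shift $i\mapsto i-1$, $j\mapsto j-1$ matching the fact that faces of a vertex-figure of $\cP$ at a vertex $v$ correspond to faces of $\cP$ containing $v$, with dimension dropping by one.
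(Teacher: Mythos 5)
Your overall architecture (monotonicity in (i), polarity via Lemmas~\ref{lem:orthogonal} and~\ref{lem:polar} in (ii), restriction to facets in (iii), and obtaining (iv) by polarizing (iii)) is the same as the paper's, and your parts (ii) and (iv) are essentially correct. However, your argument for part (i) in the \emph{strong} case has a genuine gap. Strong cutting requires $\relint(F')\cap\theBall\neq\emptyset$, and your justification --- ``each $j'$-face $F'$ contains a $j$-face $F$ with $\relint(F)\subseteq F'$'' --- only produces a point of $\theBall$ in $F'$, not in $\relint(F')$: a point in the relative interior of a proper face of $F'$ lies on the relative \emph{boundary} of $F'$. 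The implication genuinely fails if only one subface cuts: take $\theBall$ tangent to the affine span of a polygon $F'$ at a point in the relative interior of one edge $F$; then $\relint(F)\cap\theBall$ is that tangency point, so $F$ strongly cuts, while $\relint(F')\cap\theBall=\emptyset$. The paper's proof avoids this by using \emph{all} the $j$-faces of a given $(j+1)$-face $F$ at once: pick a point in $\relint(F'')\cap\theBall$ for every $j$-face $F''\subset F$; the barycenter of these points lies in $\theBall$ by convexity, and lies in $\relint(F)$ because points taken from relative interiors of distinct facets of $F$ cannot all lie in a common proper face of $F$. Iterating this gives $j'$-cutting for all $j'\ge j$, and the avoiding statement then follows by polarity (your ``supporting hyperplane argument'' for strong avoiding is left unspecified; it can be made to work by averaging the supporting functionals, but the paper simply dualizes the cutting case).

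Note also that this gap propagates into your part (iii): to view a facet $\cF$ as an $(i,j)$-scribed $(d-1)$-polytope you need $\theSphere\cap\Span(\cF)$ to be a genuine $(d-2)$-sphere, i.e.\ you need $\Span(\cF)$ to meet the open ball. The paper gets this exactly from the barycenter argument in (i): when $j\le d-2$, each facet contains at least two distinct points of $\theBall$ in relative interiors of its $j$-faces, and the open segment between them meets $\interior\theBall$, so facets strictly cut in the weak sense. Without the corrected (i), your restriction step in (iii) is not justified as stated.
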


\begin{proof}\leavevmode

	\begin{enumerate}
		\item Assume that $\cP$ is strongly $j$-cutting, we need to prove that
			$\cP$ is also $j'$-cutting for any $j' \ge j$.  Indeed, for any
			$(j+1)$-face $F$, take a point in $\relint F' \cap \theBall$ for each
			$j$-face $F'$ incident to $F$, then the barycenter of these points is in
			$\relint F \cap \theBall$.  By polarity we see that if $\cP$ is strongly
			$i$-avoiding, then $\cP$ is also $i'$-avoiding for any $i' \le i$.  This
			proves the strong version of the statement.
			The weak version follows similarly, just replace $\relint$ by $\Span$.  

		\item The weak version follows directly from Lemma~\ref{lem:orthogonal}.
			For the strong version, one also needs Lemma~\ref{lem:polar}.

		\item For either strong or weak version, notice from the proof of (i) that
			$\cP$ is strictly $(d-1)$-cutting in the weak sense.  Let $F$ be a facet.
			If we identify $\Span F \subset \Sph^d$ with $\Sph^{d-1}$, then $F\subset
			\Sph^{d-1}$ is an $(i,j)$-scribed $(d-1)$-polytope with respect to the
			sphere $\Span F \cap \theSphere$.

		\item By polarizing (iii).
			\qedhere
	\end{enumerate}
\end{proof}

In the remaining part of the paper, our goal is as follows: for each triple
$(d,i,j)$ we either try to prove that every $d$-polytope is $(i,j)$-scribable,
or we try to construct an example of a $d$-polytope that is not
$(i,j)$-scribable.  In view of Lemma~\ref{lem:property}(i), we seek to
construct polytopes that are not $(i,j)$-scribable with $j-i$ as large as
possible, or to prove that every $d$-polytope is $(i,j)$-scribable for $j-i$ as
small as possible.  The remaining items of Lemma~\ref{lem:property} are used to
do induction on the dimension $d$.

We end this section by proving that the Euclidean and spherical definitions of
strong $(i,j)$-scribability are equivalent.  When $i<j$, the analogue of
Lemma~\ref{lem:equivalent} does not guarantee simultaneously that $\cP$ is
bounded and contains the origin.  However, boundedness suffices for the
equivalence between the Euclidean and the spherical setup. 

\begin{lemma}\label{lem:newequivalent}
	If a $d$-polytope $\cP$ is strongly $(i,j)$-scribable, with $0\leq i\leq
	j\leq d-1$ then $\cP$ admits a strongly $(i,j)$-scribed bounded realization
	in Euclidean space $\Euc^d$.
\end{lemma}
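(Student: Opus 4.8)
The plan is to move the given strongly $(i,j)$-scribed spherical realization $\cP=\cK\cap\Sph^d$ into the open hemisphere $\{x_0>0\}$ by a Lorentz transformation: its central projection to $H_0=\Euc^d$ is then a bounded polytope, and since Lorentz transformations fix $\theSphere$ they preserve strong avoiding and strong cutting, so the projection is a bounded strongly $(i,j)$-scribed realization. Concretely, I would look for a future-timelike vector $w\in\interior(\polar{\cK})$. Such a $w$ satisfies $\langle w,y\rangle<0$ for all $y\in\cK\setminus\{0\}$; taking a Lorentz transformation $T$ with $Tw=e_0$ and using that $T$ preserves the form gives $\langle e_0,Ty\rangle=\langle w,y\rangle<0$, i.e.\ $(Ty)_0>0$, so $T\cK\subseteq\{x_0>0\}\cup\{0\}$, as desired. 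Thus everything reduces to producing this $w$.

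The existence of $w$ is a separation statement: by Hahn--Banach applied to the two nonempty open convex cones $\interior(\polar{\cK})$ and $\interior\theLightCone$, either they meet (and we are done) or they are separated by some $v\neq 0$ with $-v\in\polar{\polar{\cK}}=\cK$ and $v\in\polar{\theLightCone}=\theLightCone$, that is, $\cK\cap(-\theLightCone)\neq\{0\}$. So it suffices to prove that the realization misses the closed antipodal cap, $\cP\cap(-\theBall)=\emptyset$, equivalently $\cK\cap(-\theLightCone)=\{0\}$.

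The heart of the argument is this last claim. Since $i\ge 0$, Lemma~\ref{lem:property}\ref{it:otherij} shows that the same realization $\cP$ is $0$-avoiding; thus each vertex $v$ is exposed by a supporting hyperplane $u_v^\perp$ with $u_v^\perp\cap\cP=\{v\}$, with $\cP\subseteq\polar{u_v}$, and with $u_v\in\theLightCone$ (this is exactly the requirement $\theBall\subseteq\polar{u_v}$). Suppose, for contradiction, that some $p\in\cP$ lies in $-\theLightCone$, so that $-p$ is future-causal. For every vertex $v$ we then have $\langle u_v,p\rangle\le 0$ (because $p\in\cP\subseteq\polar{u_v}$) and, since $u_v$ and $-p$ are both future-causal, $\langle u_v,-p\rangle\le 0$, i.e.\ $\langle u_v,p\rangle\ge 0$; hence $\langle u_v,p\rangle=0$. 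If $-p$ is timelike, the inequality $\langle u_v,-p\rangle\le 0$ is strict, a contradiction; if $-p$ is lightlike, the equality case for future-causal vectors forces $u_v\parallel-p$, so that $p\in u_v^\perp\cap\cP=\{v\}$ and $p=v$. As this holds for every vertex $v$ and a $d$-polytope has at least two vertices, we again reach a contradiction. Therefore $\cP\cap(-\theBall)=\emptyset$, which by the previous paragraphs completes the proof.

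The step I expect to demand the most care is the bookkeeping of signs and equality cases in this last paragraph — in particular the use of the fact that strong avoidance of a \emph{vertex} yields a normal exposing that vertex alone (so that $\langle u_v,p\rangle=0$ forces $p=v$), and the verification that the central projection genuinely transports the spherical notions of cutting and avoiding to their Euclidean counterparts. The remaining ingredients, namely the Hahn--Banach separation of the two cones and the explicit Lorentz boost sending $w$ to $e_0$, are routine.
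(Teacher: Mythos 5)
Your proof is correct, but it takes a route that is essentially polar--dual to the paper's, and the key mechanisms differ. The paper exploits the \emph{cutting} half of the hypothesis: since every facet strongly cuts $\theSphere$ (by Lemma~\ref{lem:property}\ref{it:otherij}), one picks a point of $\theBall$ in the relative interior of each facet; the barycenter of these points is interior to both $\cP$ and $\theBall$, a M\"obius transformation sends it to the center of $\theSphere$, and then polarity (Lemma~\ref{lem:property}\ref{it:polar}) converts ``$\polar{\cP}$ contains the center'' into ``$\cP$ is bounded''. You instead exploit only the \emph{avoiding} half: the causal-vector computation with the vertex normals $u_v\in\theLightCone$ shows $\cK\cap(-\theLightCone)=\{0\}$, cone separation then produces a future-timelike $w\in\interior(\polar{\cK})$, and the boost $w\mapsto e_0$ places $\cP$ in the open upper hemisphere directly. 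What the paper's argument buys is brevity and an explicit construction: the barycenter \emph{is} the sought interior point, so no Hahn--Banach step and no case analysis on lightlike equality are needed. What yours buys is that no final dualization is required, and it isolates a slightly stronger fact: strong $0$-avoidance alone, with no cutting hypothesis, already forces the realization to miss the antipodal cap and hence to admit a bounded repositioning. Two small bookkeeping points, which you essentially flagged yourself: Lemma~\ref{lem:property}\ref{it:otherij} is stated for combinatorial types, so to claim the \emph{same} realization is $0$-avoiding you should invoke its proof (or argue directly that summing the normals of the $i$-faces through a vertex exposes that vertex while keeping $\theBall$ on the correct side); and your $T$ must be orthochronous and needs $w$ normalized to a unit timelike vector --- both automatic here, since a form-preserving map sending the future-timelike $w$ to $e_0$ preserves $\theLightCone$, which is exactly what makes strong cutting and avoiding invariant under $T$.
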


\begin{proof}
	We prove the dual statement, i.e.\ $\cP$ admits a strongly $(i,j)$-scribed
	realization with the center of $\theSphere$ in its interior.  Since $\cP$ is
	strongly $(i,j)$-scribed, each facet contains a point of $\theBall$. The
	barycenter of these points is interior to both $\cP$ and $\theBall$, and can
	be sent to the center of $\theSphere$ with a projective transformation that
	fixes $\theBall$.  This gives a strongly $(i,j)$-scribed realization of $\cP$
	containing the center, then the polarity yields a bounded realization of
	$\cP^*$.
\end{proof}

\section{Weak scribability}\label{sec:weak}
In this section we concentrate on the investigation of weak
$(i,j)$-scribability.  On the one hand, we show in  Theorem~\ref{thm:weakk}
that there are $d$-polytopes that are not weakly $k$-scribable except for $d=3$
and $k=1$ or $d\leq 2$, since a $3$-polytope is already strongly
$1$-scribable. On the other hand, we show in Theorem~\ref{thm:weakij} that when
$i<j$, every polytope is weakly $(i,j)$-scribable.

\subsection{Weak $k$-scribability} \label{ssec:weak}

Despite Example~\ref{ex:weak}, there are $3$-polytopes that are not weakly
inscribable. We provide two constructions.

\begin{figure}[hptb]
	\includegraphics[width=.18\linewidth]{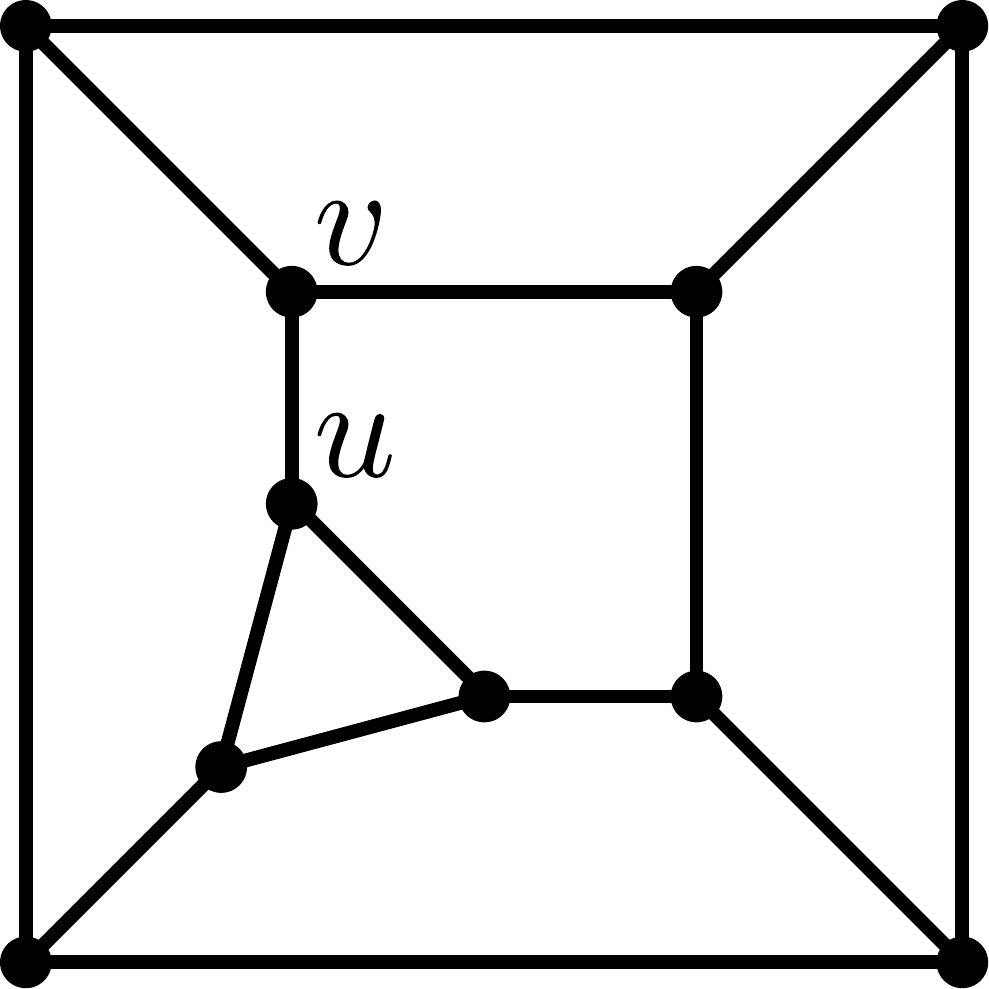}
	\caption{The truncated cube is not weakly inscribable.}\label{fig:trunc_cube}
\end{figure}

\begin{example}\label{exp:trunc_cube}
	The $3$-polytope $\cP$ obtained by truncating one vertex of a $3$-cube is not
	weakly inscribable.
\end{example}

\begin{proof}
	Let $\cK$ be the polyhedral cone spanned by a spherical realization of $\cP$
	and let $H$ be a transversal hyperplane of $\cK$.  If we identify the
	Euclidean space $\Euc^d$ with $H$ (instead of with $H_0$ as usual), the
	polytope $\cP' = H \cap \cK$ is a bounded polytope, and the ``sphere''
	$\theSphere = H \cap (\partial\theLightCone \cup -\partial\theLightCone)$
	appears as a quadric in $\Euc^d$.  Since $\cP$ is weakly inscribed, the
	vertices of $\cP'$ are all on the quadric~$\theSphere$.

	It is well known that if seven vertices of a (combinatorial) $3$-cube lie on
	a quadric, so does the eighth one \cite[Section~3.2]{bobenko2008}.  We can
	recover a $3$-cube by removing the truncating facet of $\cP'$.  Let $w$ be
	the 8th vertex of this cube (the one that does not belong to $\cP'$).  Then
	$w$ and the points $u$ and $v$ from Figure~\ref{fig:trunc_cube} all lie on
	the quadric.  If a quadric contains 3 collinear points, then it contains a
	whole line (see \cite[Ex~3.7]{bobenko2008}).  However, since $0\notin H$, our
	conic section does not contain lines; a contradiction.
\end{proof}

We are now ready to prove Theorem~\ref{thm:weakk}, which we repeat below.

\begin{reptheorem}{thm:weakk}
	Except for the case $d=3, k=1$, for every $d\geq 3$ and $0\leq k \leq d-1$, there
	is a $d$-polytope that is not weakly $k$-scribable.
\end{reptheorem}

Schulte~\cite[Sec.~3]{schulte1987} gave the proof for $k\leq d-3$, but remarked
that the remaining cases would follow from the existence of polytopes that are
not weakly circumscribable. The polar of the truncated cube is such a polytope.

\begin{proof}
	The proof is by induction on $d$ and follows the same steps as Schulte's.

	In dimension $3$, the truncated cube is not weakly inscribable, and its polar
	is not circumscribable by~\ref{lem:property}(i) (the weak version with $i = j
	= k$).  The pyramid over the truncated cube is a $4$-polytope that has a
	truncated cube as a facet and as a vertex figure, so it is not weakly
	$k$-scribable for $k=0,1$ by Lemma~\ref{lem:property}(ii) and (iii) (the weak
	version with $i = j = k$).  Its polar has a stacked octahedron as a facet and
	as a vertex figure, so it is not weakly $k$-scribable for $k=2,3$.

	In higher dimensions, the pyramid over a $(d-1)$-polytope that is not weakly
	$k$-scribable gives a $d$-polytope that is neither $k$- nor $(k+1)$-scribable.  So
	the theorem follows by induction.
\end{proof}

Recall that although we work in spherical space, our definition of weak
$k$-scribability is weaker than the one for the Euclidean setting used by Schulte.
Consequently, examples for Theorem~\ref{thm:weakk} are not weakly $k$-scribable
in Euclidean space, neither.  This finishes Schulte's work.

We now present an alternative construction.  Despite the absence of
Lemma~\ref{lem:property}(ii) in Euclidean space, it is possible to bypass the
spherical geometry, and construct polytopes that are not weakly circumscribable
directly within the Euclidean setup of~\cite{schulte1987}.  Our construction is
based on the following lemma:

\begin{lemma}\label{lem:noweaklypentagon}
	Any weakly circumscribed Euclidean polygon with more than $4$ edges is also
	strongly circumscribed.
\end{lemma}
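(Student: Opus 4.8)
The plan is to work in the Euclidean plane with a fixed circle $\theSphere$ bounding the disc $\theBall$, and to analyze a weakly circumscribed polygon $\cP$ with $n \ge 5$ edges. By definition, each edge of $\cP$ lies on a line tangent to $\theSphere$, i.e.\ the affine hull of every edge is tangent to the circle. The goal is to upgrade ``weakly circumscribed'' to ``strongly circumscribed'', which in the $(i,j) = (d-1,d-1)$ setting (here $d=2$, so facets are edges) means showing that each edge actually \emph{touches} $\theSphere$ in its relative interior, rather than merely lying on a tangent line. The key geometric obstruction I would exploit is that the tangent lines to a circle split the plane into regions, and a convex polygon bounded by tangent lines cannot ``escape'' to use the tangency points from the far side of the circle if it has too many edges.

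\textbf{Key steps.} First I would observe that each edge line $\ell_e$ is tangent to $\theSphere$ at a unique point $p_e$, and the half-plane $\ell_e^-$ containing $\cP$ either contains $\theBall$ or contains its complement. The crucial dichotomy: an edge is strongly circumscribed exactly when $\theBall \subset \ell_e^-$ \emph{and} the tangency point $p_e$ lies in the relative interior of the edge. If some edge fails to be strongly circumscribed, then either $\theBall \not\subset \ell_e^-$ (the polygon lies on the ``wrong side'', away from the disc) or $p_e$ falls outside the edge segment. Second, I would argue that at most a bounded number of edges can be of the ``wrong-side'' type: since the tangent lines with the disc on the far side carve out a limited angular range, and $\cP$ is convex, having many such edges forces a contradiction with convexity or with $\cP$ being a bounded polygon enclosing a nondegenerate region. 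Third, for the ``right-side'' edges where $\theBall \subset \ell_e^-$, the tangency point $p_e$ must lie on the edge: here I would use that consecutive tangent lines meet at vertices, and the tangency points are ordered around the circle consistently with the cyclic order of edges around $\cP$; an angle-counting argument (the tangent points trace out a total turning of $2\pi$) shows that once $n \ge 5$, each tangency point is forced into its corresponding segment.

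\textbf{Main obstacle.} I expect the delicate part to be the case analysis distinguishing edges whose tangent line has $\theBall$ on the near side versus the far side, and ruling out the degenerate configurations where a tangency point slips past a vertex. The number $4$ in the hypothesis is sharp — a square circumscribed ``inside out'' or a quadrilateral using tangent lines cleverly can be weakly but not strongly circumscribed — so the argument must genuinely use $n \ge 5$. The cleanest route is probably an angle/turning argument: assign to each edge the angular position of its tangency point on $\theSphere$; convexity of $\cP$ forces these positions to be monotonic around the circle, and the total angular sweep is $2\pi$. I would then show that a failure of strong circumscription at some edge creates an angular ``backtrack'' or a gap exceeding $\pi$, which is compatible with $n \le 4$ edges but impossible once $n \ge 5$, since five or more tangent lines in convex position must wrap the circle in a way that pins every tangency point inside its edge. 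Making this monotonicity-and-gap argument fully rigorous, and handling the boundary case where a tangency point coincides with a vertex, will be where the real work lies.
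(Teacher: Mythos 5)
You start from the same dichotomy as the paper: each edge line is tangent to $\theSphere$, and the supporting half-plane of an edge either contains $\theBall$ (non-separating, your ``right-side'' edges) or has $\theBall$ on the opposite side (separating, your ``wrong-side'' edges). But the architecture of your argument after that point has a genuine gap. Your plan is (a) to bound the number of separating edges by some constant, and (b) to show, for $n\ge 5$, that the tangency point of every non-separating edge is forced into the relative interior of that edge by a monotonicity/turning argument. Step (b) is misdirected. If there is no separating edge at all, then $\theBall\subseteq\cP$ and every tangency point automatically lies in the relative interior of its edge: it lies in $\cP$ and on the edge's line, hence on the edge, and it cannot be a vertex because two distinct tangent lines of a circle never meet on the circle. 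No turning argument and no hypothesis $n\ge 5$ is needed there. If instead a separating edge (with outer unit normal $a_0$, half-plane $\{\sprod{a_0}{x}\le -1\}$) is present, the cyclic monotonicity you want to exploit breaks down, and in fact the tangency point of a non-separating edge, which is its unit normal $a_1$ itself, can never lie in $\cP$: membership would force $\sprod{a_0}{a_1}\le -1$, hence $a_1=-a_0$ and coincident edge lines. So no direct argument can ``pin'' those tangency points once a separating edge exists; the only route is to show that a separating edge cannot coexist with five or more edges, and that step is missing from your plan.

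That missing step is precisely the content of the paper's proof, and note that it bounds the \emph{total} number of edges, not just the separating ones: at most two edges can be separating (three separating half-planes $\{\sprod{a_i}{x}\le -1\}$ have positively dependent normals, making the system infeasible, or else one normal lies in the positive cone of the other two, making its inequality redundant), and once one separating edge is present there can be at most two non-separating edges (a relation $a_1=\lambda a_0+\mu a_2$ with $\lambda,\mu>0$ and $a_0$ separating makes the inequality of $a_1$ redundant, and such a relation is unavoidable among three non-separating normals together with $a_0$). Hence a separating edge forces $n\le 4$, which is the contrapositive of the lemma. Your step (a), besides being asserted only qualitatively (``a bounded number'', with no redundancy or positive-dependence argument behind it), would not suffice even if proven: it leaves open the possibility of, say, a $10$-gon with two separating edges, which would be weakly but not strongly circumscribed; excluding that requires bounding the non-separating edges in the presence of a separating one. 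Your side remark that $4$ is sharp is correct, however: there do exist weakly circumscribed quadrilaterals with two separating and two non-separating edges, as in Figure~\ref{fig:weaklycircumscribed}.
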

\begin{proof}
Let $\cP$ be a weakly circumscribed polygon in Euclidean space.  A half-plane
$L^-$ supporting an edge of $\cP$ is either of the form $\{\sprod{a}{x} \le
-1\}$ for some unit vector $a$, in which case we call the edge
\defn{separating} because $\cP$ and $\theBall$ lie at opposite sides of the
supporting line, or of the form $\{\sprod{a}{x} \le 1\}$, and then
$\theBall\cup \cP\subset L^-$  and we call the edge \defn{non-separating}.
Observe that $\cP$ is not strongly circumscribed if and only if it has a
separating edge.  

Assume that $\cP$ has three separating edges.  If the unit normal vectors
were positively dependent, the intersection of the half-planes would be
empty. Hence, we may assume that one of them can be written as a linear
combination of the other two with positive coefficients, but then one can
easily see that this inequality is redundant.

We also claim that, with the presence of a separating edge, $\cP$ can have at
most two non-separating edges. To see this, consider one separating edge with
unit vector $a_0$ and two non-separating edges with vectors $a_1, a_2$.  A
linear relation of the form $a_1 = \lambda a_0 + \mu a_2$ with positive
coefficients is not possible.  Otherwise, a small computation shows that the
inequality defined by $a_1$ is redundant. 	The forbidden linear relation is
however inevitable if $\cP$ has three non-separating edges.

Hence, if $\cP$ has a separating edge, then it can have at most four edges,
two of each kind.  The different situations are illustrated in
Figure~\ref{fig:weaklycircumscribed}.  
\end{proof}

\begin{figure}[hptb]
	\includegraphics[width=.6\linewidth]{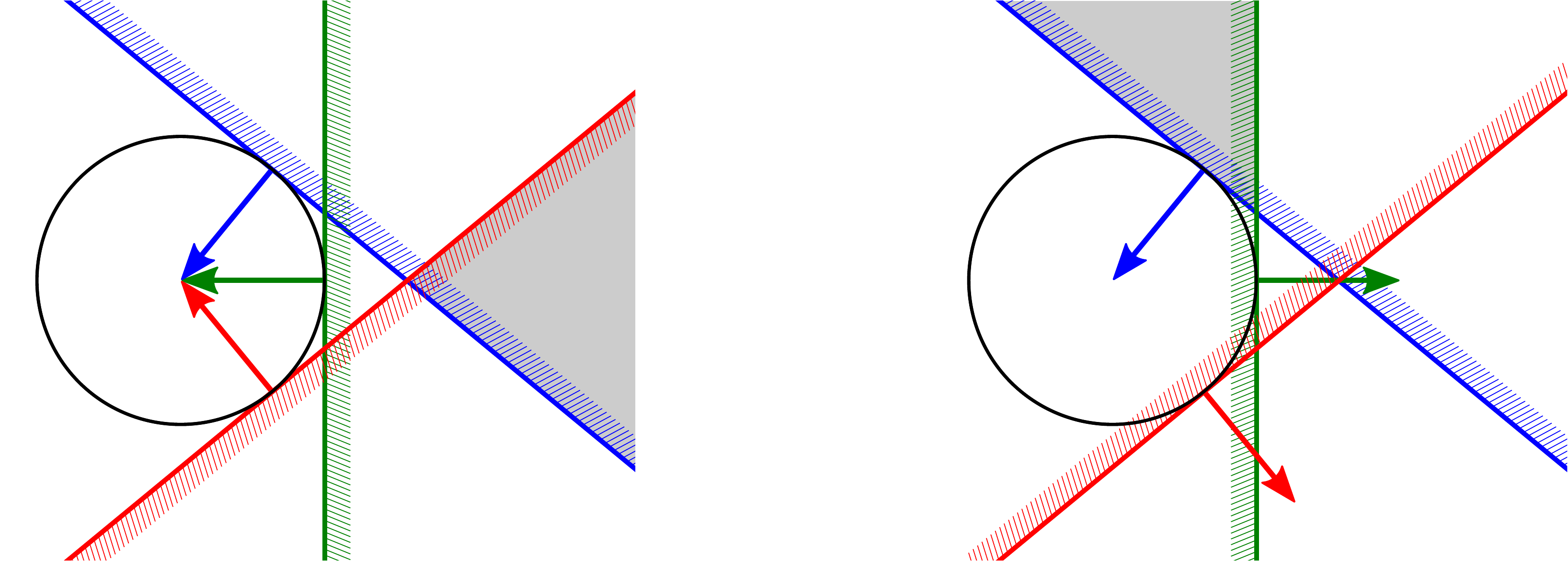}
	\caption{
		Redundant inequalities for the proof of Lemma~\ref{lem:noweaklypentagon}.
		The depicted vectors are outer normal vectors and point away from the
		half-space. The shaded region is the intersection of all the half-spaces.
		Notice that in both situations there is a half-space containing the
		intersection of the other two.
	}\label{fig:weaklycircumscribed}
\end{figure}

The same proof carries over almost directly to spherical polytopes. The correct
statement in the spherical setting would be: If $\cP\subset\Sph^2$ is a weakly
circumscribed spherical polygon with more than $4$ edges, then either $\cP$ or
$-\cP$ is strongly circumscribed.

\begin{example}\label{ex:2stdodecahedron}
 	Let $\cP$ be the polytope obtained by truncating all vertices of a
 	tetrahedron, then stacking on each of the newly created facets, and then
 	stacking again on each of the newly created facets
 	(Figure~\ref{fig:2stsimplex}).  Then $\cP$ is not weakly circumscribable. 
\end{example}

\begin{figure}[hptb]
	\includegraphics[width=.2\linewidth]{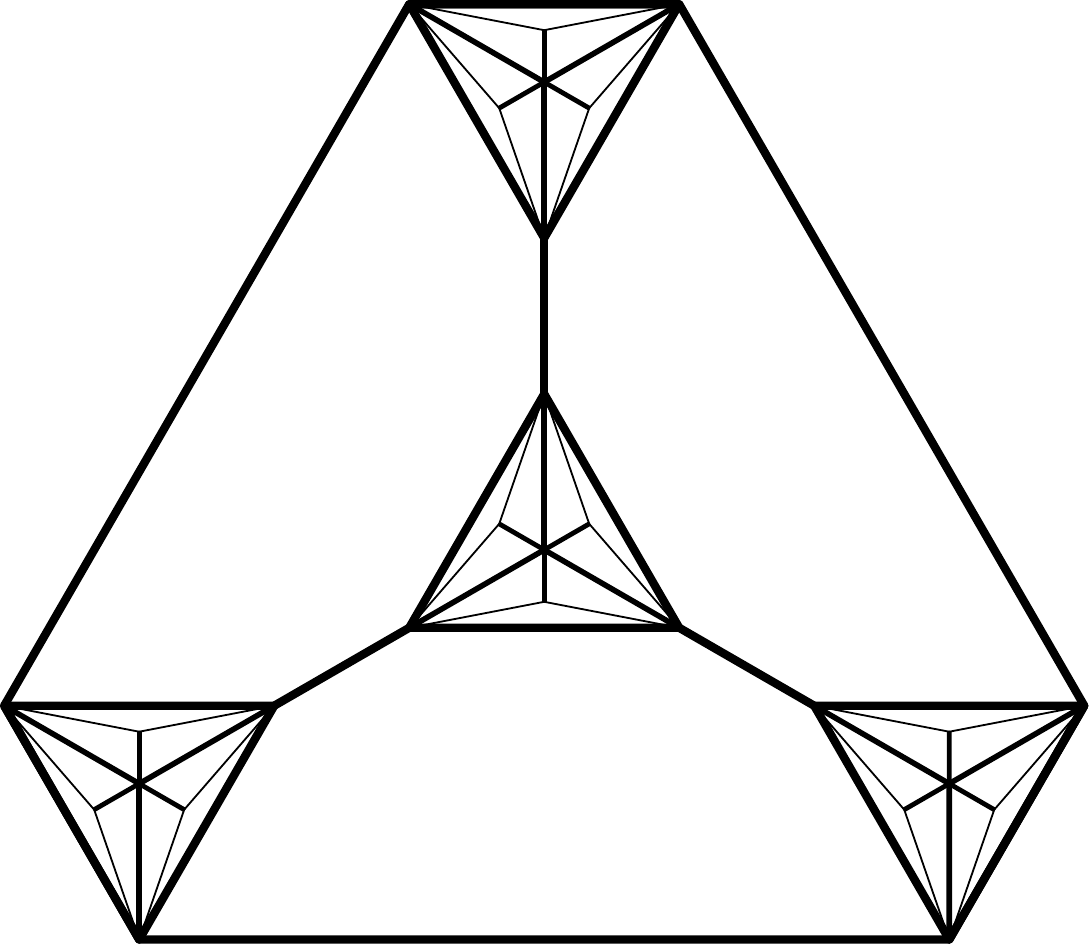}
	\caption{This polytope is not weakly circumscribable.}\label{fig:2stsimplex}
\end{figure}

\begin{proof}
	We start by showing that $\cP$ is not strongly circumscribable. First of all,
	the truncated tetrahedron is not strongly circumscribable (it is polar to the
	triakis tetrahedron).  For any facet arising from the truncation, we can
	replace the supporting half-space by any supporting half-space of one of the
	simplices stacked on the facet, and the result is still a truncated
	tetrahedron.  Hence $\cP$ is not strongly circumscribable.

	Now assume that $\cP$ is weakly circumscribed but not strongly circumscribed.
	Then there is a facet $F$ whose supporting half-space does not contain
	$\theBall$.  Every facet is incident to a vertex
	of degree $6$.  Let $v$ be such a vertex incident to $F$, then the vertex
	figure at $v$ is a weakly circumscibed hexagon that is not strongly
	circumscribed, contradicting Lemma~\ref{lem:noweaklypentagon}.
\end{proof}

The same method can be applied to many other $3$-polytopes proven to be
non-circumscribable by Steinitz (cf.\ \cite[Theorem 13.5.2]{grunbaum2003}) to
get an infinite family of $3$-polytopes that are not weakly circumscribable.
More specifically, if a simple $3$-polytope has more vertices than facets, then
truncating the vertices yields a polytope that is not strongly circumscribable,
and stacking twice on the truncated facets provides a polytope that is not
weakly circumscribable.

\subsection{Weak $(i,j)$-scribability}

We end by dealing with the remaining cases and showing that weak
$(i,j)$-scribability is indeed very weak.

\begin{reptheorem}{thm:weakij}
	Every $d$-polytope is weakly $(i,j)$-scribable for $0 \le i <j\le d-1$.
\end{reptheorem}

\begin{proof}
	It suffices to show that every $d$-polytope $\cP$ is weakly
	$(i,i+1)$-scribable for all $0\leq i\leq d-2$.  Consider a Euclidean
	realization of $\cP$ and a generic affine subspace $L$ of dimension $d-i$
	that does not intersect $\cP$.  Then $L$ intersects the affine span of every
	$(i+1)$-face of $\cP$ at a single point, but does not intersect the affine
	span of any $i$-face.  We can find, in the neighborhood of $L$, an ellipsoid
	$\cE$ that contains all these intersection points but remains disjoint from the
	affine span of every $i$-face. The affine transformation that sends $\cE$ to
	the unit ball $\theBall$ sends $\cP$ to a weakly $(i,i+1)$-scribed
	realization.
\end{proof}

\section{Stacked polytopes}\label{sec:stacked}
From now on, we focus only on strong scribability. In the remaining of the
paper,  we often omit the adjective ``\emph{strong}'', which has to be
understood whenever we talk about $(i,j)$-scribability unless explicitly stated
otherwise.

The goal of the current section is to solve the scribability problems for
stacked polytopes.  That is, we want to know which are the values of $k$ such
that every stacked $d$-polytope is $k$-scribable.

We recall the definitions of stacking and stacked polytope. For a $d$-polytope
$\cP$ with a simplicial facet $F$, we \defn{stack} a vertex onto $F$ by taking
the convex hull of $\cP\cup p$ for some point $p$ close enough to the
barycenter of $F$.  In terms of the \defn{connected sum} (cf.\
Section~\ref{sec:ridge}), this corresponds to gluing $\cP$ and a $d$-simplex
$\Delta$ along $F$.  A \defn{stacked polytope} is any polytope obtained from a
simplex by repeatedly stacking vertices onto facets.  The dual of a stacked
polytope is a \defn{truncated polytope}, obtained from a simplex by repeatedly
cutting off vertices.

A stacked polytope $\cP$ of dimension $d \ge 3$ has a unique triangulation $\cT$
with no interior faces of dimension $<d-1$, the \defn{stacked
triangulation}.  The \defn{dual tree} of $\cT$ takes the maximal simplices in
$\cT$ as vertices and connects two vertices if they share a face of dimension
$d-1$.  %

Already in dimension $3$, stacked polytopes provide the first examples of
polytopes that are not inscribable~\cite{steinitz1928}. Gonska and
Ziegler~\cite{gonska2013} proved that a stacked polytope is inscribable if and
only if all the nodes of its dual tree have degree~$\le 3$.  In higher
dimensions, Eppstein, Kuperberg and Ziegler~\cite{eppstein2003} proved that no
stacked $4$-polytope on more than $6$ vertices is edge-scribable.  

While it is well known that stacked polytopes present obstructions to
inscribability and edge-scribability, the other side of the story seems to have
escaped the attention of the community.  In this section, we show that stacked
polytopes are actually always circumscribable and ridge-scribable.  On the
other hand, stacked polytopes that are not $k$-scribable exist for any other
smaller $k$.

\begin{proposition}\label{prop:face}
	For any $d>3$ and $0\leq k\leq d-3$, there is a stacked $d$-polytope that is
	not $k$-scribable.
\end{proposition}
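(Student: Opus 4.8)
The plan is to prove the statement by induction on $k$, raising the dimension in step with $k$, and to transfer non-scribability \emph{downward} from a vertex figure to the polytope using Lemma~\ref{lem:property}(iv). Concretely, I would establish the slightly rebalanced claim $C(k)$: for every $k\ge 0$ and every $d\ge k+3$ there is a stacked $d$-polytope that is not $k$-scribable. Proposition~\ref{prop:face} is exactly the range $0\le k\le d-3$ of this claim.

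For the base case $C(0)$ I would exhibit a non-inscribable stacked polytope in every dimension $d\ge 3$. The cleanest way is to invoke the characterization of Gonska and Ziegler~\cite{gonska2013} quoted above: a stacked polytope is inscribable if and only if every node of its dual tree has degree at most $3$. Starting from a $d$-simplex and stacking a vertex onto four of its $d+1\ge 4$ facets produces a stacked $d$-polytope whose initial node has degree $4$ (a generalized triakis construction); hence it is not inscribable, settling $C(0)$ for all $d\ge 3$.

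The inductive step is the heart of the argument and rests on a purely combinatorial observation: \emph{stacking a vertex onto a facet of $\cP$ that contains a fixed vertex $v$ corresponds exactly to stacking a vertex onto the matching facet of the vertex figure $\cP/v$.} Indeed, if $G=\conv(v,x_1,\dots,x_{d-1})$ is a facet of a simplicial $d$-polytope $\cP$ through $v$ and we stack $w$ on $G$, then the new facets through $v$ are $\conv(w,v,x_1,\dots,\widehat{x_i},\dots,x_{d-1})$ for $1\le i\le d-1$; under the canonical bijection between facets of $\cP$ through $v$ and facets of $\cP/v$, these are precisely the facets produced by stacking on the facet $\overline G=\conv(\overline{x_1},\dots,\overline{x_{d-1}})$ of $\cP/v$. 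In particular $\cP/v$ stays a stacked $(d-1)$-polytope, and, following a prescribed stacking sequence of an arbitrary stacked $(d-1)$-polytope $\cQ$ — at each step choosing the facet of $\cP$ through $v$ that matches the next facet to be stacked in $\cQ$, starting from the $d$-simplex whose vertex figure at $v$ is a $(d-1)$-simplex — one realizes $\cQ$ as the vertex figure $\cP/v$ of a stacked $d$-polytope $\cP$.

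With this tool the induction closes at once. Assuming $C(k-1)$ for some $k\ge 1$, pick for any $d\ge k+3$ a stacked $(d-1)$-polytope $\cQ$ (note $d-1\ge k+2$) that is not $(k-1)$-scribable, and realize it as a vertex figure $\cP/v$ of a stacked $d$-polytope $\cP$ as above. If $\cP$ were $k$-scribable, i.e.\ $(k,k)$-scribable, then by Lemma~\ref{lem:property}(iv) its vertex figure $\cQ$ would be $(k-1,k-1)$-scribable, hence $(k-1)$-scribable by Lemma~\ref{lem:reduce}, a contradiction; thus $\cP$ is not $k$-scribable, proving $C(k)$. The main obstacle is entirely combinatorial, namely verifying the vertex-figure/stacking correspondence and, above all, that the canonical facet bijection survives each stacking so that \emph{every} stacked $(d-1)$-polytope genuinely arises as a vertex figure. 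The geometric content is confined to the base case and to the general machinery already provided by Lemma~\ref{lem:property}.
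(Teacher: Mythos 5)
Your proposal is correct and takes essentially the same route as the paper: the base case $k=0$ rests on the non-inscribable stacked polytopes of Gonska and Ziegler~\cite{gonska2013}, and the induction transfers non-scribability through vertex figures via Lemma~\ref{lem:property}\ref{it:vf}, using the fact that every stacked $(d-1)$-polytope arises as a vertex figure of a stacked $d$-polytope. The only difference is one of detail: the paper asserts this vertex-figure fact without proof, whereas you verify the stacking/vertex-figure correspondence explicitly and build the base-case example from the dual-tree characterization rather than just citing existence.
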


\begin{proof}
	In \cite{gonska2013} it is proved that, for every $d\geq 3$, there is a
	stacked $d$-polytope that is not inscribable, which solves the case $k=0$.
	We conclude the proposition by induction using Lemma~\ref{lem:property}(ii)
	and the fact that every stacked $d$-polytope is the vertex-figure of a
	stacked $(d+1)$-polytope, and a $k$-face figure of a stacked
	$(d+1+k)$-polytope. 
\end{proof}

We can strengthen this statement for the case $d>3$ and $k=d-3$.

\begin{proposition}\label{prop:subridge}
	For $d>3$, no stacked $d$-polytope with more than $d+2$ vertices is
	$(d-3)$-scribable.
\end{proposition}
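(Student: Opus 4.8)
The goal is to strengthen Proposition~\ref{prop:face} for the case $k=d-3$ by showing that \emph{no} stacked $d$-polytope with more than $d+2$ vertices is $(d-3)$-scribable, not merely that some example exists. My plan is to reduce the statement to a local obstruction living in the face figure of a $(d-3)$-face, using the face-figure item of Lemma~\ref{lem:property}, and then to identify that local obstruction combinatorially.

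First I would recall that a stacked $d$-polytope with more than $d+2$ vertices is obtained by stacking at least twice, so its dual tree has at least three nodes. In a stacked polytope the face figure $\cK/F$ of a $(d-3)$-face $F$ is a polytope of dimension $d-(d-3)-1 = 2$, i.e.\ a polygon, and its combinatorial type records how many maximal simplices of the stacked triangulation contain $F$. The key is that if $\cP$ is $(d-3)$-scribable, then in particular it is $k$-scribable with $k=d-3$, so by Lemma~\ref{lem:property}\ref{it:facet} (applied iteratively) or more directly by taking the face figure, the $(d-3)$-face figure must itself be $0$-scribable (inscribable) as a polygon: scribing a $(d-3)$-face to $\theSphere$ forces its face figure to have a vertex on the corresponding lower-dimensional sphere. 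I would make this precise by localizing at $F$ exactly as in the proof of Lemma~\ref{lem:equivalent}: cut $\cP$ by the subspace $L_F^\perp$ spanned by $F^\perp$ and the center of $\theBall$, obtaining a polygon $\cP' = L_F^\perp \cap \cP$ with a vertex $t_F$ tangent to a circle.

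The crux of the argument is then to find, in any stacked $d$-polytope with more than $d+2$ vertices, a $(d-3)$-face $F$ whose face figure is a polygon with enough vertices that tangency of $F$ becomes impossible. Concretely, every polygon is trivially inscribable, so plain inscribability of the face figure is not the obstruction; instead, $(d-3)$-scribability forces \emph{all} $(d-3)$-faces simultaneously tangent, which translates into all the relevant polygons being strongly $0$-scribed at a single common vertex in a compatible way. The real obstruction I expect to exploit is the same quadric/circle phenomenon used in Example~\ref{exp:trunc_cube}: when two simplices are stacked so that a $(d-3)$-face is shared among several maximal simplices, the tangency points of all the $(d-3)$-faces meeting along a common lower face are forced onto a circle, and three collinear such points (forced by the stacking combinatorics) would make the circle degenerate, contradicting $0 \notin H$. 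I would locate a subconfiguration — arising precisely because there are more than $d+2$ vertices, hence a node of degree $\ge 2$ in the dual tree with a suitable shared ridge — where three tangency points become collinear.

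The main obstacle, and where I would spend the most care, is verifying that the stacking with more than $d+2$ vertices genuinely produces the forbidden collinear/degenerate configuration for \emph{every} such polytope, not just for a cleverly chosen one; this is what upgrades ``there exists a non-scribable example'' to ``none is scribable.'' I would therefore analyze the dual tree: with more than $d+2$ vertices there are at least three maximal simplices, so some ridge (a $(d-2)$-face) is interior and shared, and the $(d-3)$-subfaces on that ridge have face figures that are at least triangles (polygons with $\ge 3$ edges). I expect to show that simultaneous tangency of these $(d-3)$-faces pins three points on a common circle to a line, using that the interior ridge forces an affine dependence among the three tangency points. Closing this last step rigorously — producing the collinearity from the combinatorics of the interior ridge and invoking the no-line-on-a-conic argument of \cite[Ex~3.7]{bobenko2008} — is the heart of the proof.
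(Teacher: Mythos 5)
There is a genuine gap, and it is precisely the step you yourself flag as open. Your plan reduces $(d-3)$-scribability to a condition on the face figures of $(d-3)$-faces, which are polygons; but as you correctly observe, in dimension $2$ (and also $3$) all scribability obstructions of this kind evaporate, so you pivot to a substitute obstruction: that the stacking combinatorics around an interior ridge should force three tangency points to be collinear, contradicting the no-line-on-a-conic argument of \cite[Ex~3.7]{bobenko2008}. This collinearity claim is never justified, and I see no reason it should hold: the conic argument in Example~\ref{exp:trunc_cube} exploits a very special incidence theorem for the eight vertices of a combinatorial cube on a quadric, and nothing analogous is available for tangency points of $(d-3)$-faces of a stacked polytope. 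Moreover, any argument that descends all the way to dimension $2$ or $3$ is doomed in principle: every polygon is inscribable, every $3$-polytope is edge-scribable, and stacked $3$-polytopes with arbitrarily many vertices \emph{can} be inscribable (those whose dual tree is a path, by the Gonska--Ziegler characterization~\cite{gonska2013}), so the inductive descent must stop at dimension $4$, where a genuinely $4$-dimensional obstruction is needed.

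That obstruction is the missing ingredient: the theorem of Eppstein, Kuperberg and Ziegler \cite{eppstein2003}*{Corollary~9} that no stacked $4$-polytope with more than $6$ vertices is edge-scribable. The paper's proof runs the induction exactly as far as this base case and no further: since $\cP$ has more than $d+2$ vertices, its dual tree has a node of degree $\ge 2$, so three simplices of the stacked triangulation share a $(d-2)$-face; the vertex figure at any vertex of that face is then a stacked $(d-1)$-polytope with more than $(d-1)+2$ vertices. By the vertex-figure item of Lemma~\ref{lem:property}, $(d-3)$-scribability of $\cP$ would force $(d-4)$-scribability of this vertex figure, and iterating lands on a stacked $4$-polytope with more than $6$ vertices that would have to be edge-scribable --- contradicting the EKZ theorem. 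Your combinatorial observation about the degree-$\ge 2$ node and the shared ridge is exactly the right one (it is how the paper keeps the vertex count above the threshold through the induction), but without the dimension-$4$ anchor your argument has no contradiction to reach.
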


\begin{proof}
	Any stacked $d$-polytope $\cP$ with more than $d+2$ vertices admits a vertex
	figure with the combinatorial type of a stacked $(d-1)$-polytope with more
	than $d+1$ vertices. To see this, first notice that the vertex figure at any
	vertex of $\cP$ has the combinatorial type of a stacked $(d-1)$-polytope.
	Since $\cP$ has more than $d+2$ vertices, it is not a simplex nor a
	bypiramid, hence the dual tree of $\cP$ has a node of degree~$\ge 2$.  Thus
	there are three simplices in the stacked triangulation sharing a ridge of
	$\cP$.  For any vertex in this ridge, the vertex figure contains at least
	$d+2$ vertices.	

	By Lemma~\ref{lem:property}(ii), if $\cP$ is $(d-3)$-scribable, its vertex
	figures are all $(d-4)$-scribable.  The theorem then follows by induction
	since no $4$-polytope on more than $6$ vertices is
	edge-scribable~\cite{eppstein2003}*{Corollary~9}. 
\end{proof}

\subsection{Circumscribability}

\begin{proposition}\label{prop:facet}
	Every stacked polytope is circumscribable.
\end{proposition}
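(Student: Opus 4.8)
The plan is to pass to the polar. By Lemma~\ref{lem:property}(ii), applied in the strong sense with $i=j=d-1$, a $d$-polytope is $(d-1)$-scribable (circumscribable) if and only if its polar is $0$-scribable (inscribable), and polarity is an involution on combinatorial types. Since the polar of a stacked polytope is a truncated polytope, it suffices to prove that every truncated $d$-polytope is inscribable. I would in fact aim for the stronger statement that truncated polytopes are inscribable in an arbitrary strictly convex surface, so as to recover universal inscribability in the sense of~\cite{gonskapadrol2016} at the same time.

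First I would move to the hyperbolic picture of Section~\ref{sec:prelim}: inscribing a polytope (all vertices on $\theSphere$) is the same as realizing it as an ideal polytope in the Klein model $\theBall=\Hyp^d$, and each truncation facet, being a secant hyperplane meeting $\theSphere$, is a genuine hyperbolic hyperplane whose hyperbolic reflection preserves $\theSphere$. I would then induct along the dual tree of the stacked triangulation of the polar; the base case is the simplex, which is trivially inscribable. The local move realizing one truncation is to place the vertex $v$ that is cut off as a \emph{hyperideal} point (outside $\theBall$) whose incident edges all cross $\theBall$, and to truncate it by the hyperplane through the points where these edges enter $\theSphere$. The vertices of the resulting truncation facet are then exactly those entry points and hence lie on $\theSphere$ by construction, while its dihedral angles with the adjacent facets are governed by the position of $v$. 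Since a vertex that survives in $\cP$ must sit on $\theSphere$ whereas a vertex cut off at a later stage may be hyperideal, the induction should carry the strengthened hypothesis that the next apex to be truncated is already placed hyperideally with its edges crossing $\theBall$, so that this move always applies.

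The hard part will be the global consistency: one must guarantee that the successively attached corners fit together into a single convex polytope of the prescribed combinatorial type, with no overlaps and no degenerate faces. This is precisely where the acyclicity of the dual tree enters. Because the tree has no cycles, assembling the pieces --- equivalently, composing the hyperbolic reflections across the truncation facets --- never has to satisfy a closing-up (holonomy) condition, so the freedom in choosing the hyperideal apices and the hyperbolic dihedral angles is never over-determined; controlling these angles is exactly what keeps each attached corner convex and combinatorially correct. Finally, to replace the round sphere by an arbitrary strictly convex surface I would note that the construction uses only the incidence and transversality data of the truncation hyperplanes with the quadric; transporting the configuration by a projective transformation and perturbing, or a continuity argument in the space of realizations, should then produce an inscribed realization in any strictly convex body, yielding universal inscribability.
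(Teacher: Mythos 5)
Your overall strategy --- dualize to truncated polytopes, realize a vertex to be cut off as a slightly hyperideal point, and truncate along the hyperplane through the points where its edges meet $\theSphere$ --- is exactly the local move in the paper's proof. But there is a genuine gap at the step you yourself flag as ``the hard part,'' and the argument you offer for it does not close it. The crux is not a holonomy or closing-up condition: after you truncate an apex, the $d$ new vertices you create lie \emph{on} $\theSphere$, and some of them must be truncated at a later stage of the induction, so they must then be pushed off the sphere to become hyperideal. Moving such a vertex generically destroys the planarity of the non-simplicial facets containing it, i.e.\ it changes the combinatorial type. The paper resolves precisely this point with a quantified invariant: every vertex created by a truncation is incident to a \emph{simplicial} facet $F_v$, and one pulls $v$ outward \emph{along the unique edge $e_v$ incident to $v$ that does not lie in $F_v$}. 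Then every other facet through $v$ contains the line of $e_v$, so its supporting hyperplane still contains the moved vertex, and only the simplex $F_v$ deforms (harmlessly). Your proposal states the strengthened hypothesis (``the next apex is already placed hyperideally'') but gives no mechanism to restore it after a truncation, which is the entire content of the induction. The appeal to acyclicity of the dual tree and to ``composing hyperbolic reflections across the truncation facets'' is a red herring: there is no reflection-group structure in the inscribed setting (the truncated polytope is not tiled by reflected copies of anything); that machinery belongs to the ridge-scribability proof (Propositions~\ref{prop:ridge} and~\ref{prop:consum}), where all tangency hyperplanes meet $\theSphere$ at right angles, not here.

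A second, smaller gap is your route to universal inscribability. A projective transformation preserves quadrics, so ``transporting the configuration by a projective transformation and perturbing'' cannot produce realizations inscribed in an arbitrary smooth strictly convex body, and a continuity argument in the realization space is not sketched in any checkable form. The paper obtains Proposition~\ref{prop:egg} for free because its construction never uses roundness of $\theSphere$: pulling a vertex slightly outside any strictly convex body and cutting at the entry points of its edges works verbatim. If you repair the induction as above (one vertex at a time along the dual tree is fine, provided you pull along the distinguished edge $e_v$), the same observation will give you the strengthened statement as well.
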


\begin{proof}
	We prove by explicit construction the dual version of the proposition, namely
	that every truncated polytope is inscribable.

	Let $\cP$ be a truncated polytope, obtained from a $d$-simplex $\cP_0$ by
	repeatedly truncating vertices.  We start with an inscribed realization of $\cP_0$.

	In the first step, we perform simultaneously all the truncations that remove
	vertices of $\cP_0$.  This is carried out as follows:  For every vertex $v$
	to be truncated, we pull $v$ towards the exterior of $\theSphere$ by a
	sufficiently small distance.  Let $\cP'_0$ be the adjusted simplex.  Then the
	sphere $\theSphere$ intersects the edges of $\cP'_0$ near the adjusted
	vertices, while the other vertices remain on the sphere.  The convex hull of
	these intersection points gives the desired truncated polytope $\cP_1$ with
	all its vertices on $\theSphere$.  Observe that every vertex not present
	in $\cP_0$ is incident to a simplicial facet of~$\cP_1$ arising from the truncation.

	Now, let $\cP_k$ be the truncated polytope we obtain after the first $k$ steps. In the
	$(k+1)$-th step, we perform simultaneously all the truncations that remove
	vertices of $\cP_k$. The proof is by induction on $k$. We assume that all the
	vertices of $\cP_k$ are situated on the sphere $\theSphere$, and that every
	vertex $v$ not present in $\cP_{k-1}$ is incident to a simplicial facet $F_v$
	of $\cP_k$.  These assumptions have been verified for $k=1$.

	Consider all the vertices $v$ of $\cP_k$ to be truncated in the $(k+1)$-th
	step.  We pull every such $v$ by a sufficiently small distance towards the
	exterior of the sphere $\theSphere$ along the edge $e_v$ that is incident to
	$v$ but does not belong to $F_v$.  These movements do not change the
	combinatorial type of $\cP_k$.  Let $\cP'_k$ be the adjusted polytope.  Then
	the sphere $\theSphere$ intersects the $1$-skeleton of $\cP'_k$ near the
	adjusted vertices, while the other vertices remain on the sphere.  The convex
	hull of these intersection points gives the desired truncated polytope
	$\cP_{k+1}$ with all its vertices on~$\theSphere$, and every newly created
	vertex is incident to a simplical facet of $\cP_{k+1}$.

	It then follows by induction that $\cP$ has an inscribed realization. The
	procedure is sketched in Figure~\ref{fig:truncating}.
\end{proof}

\begin{figure}[hptb]
	\includegraphics[width=\linewidth]{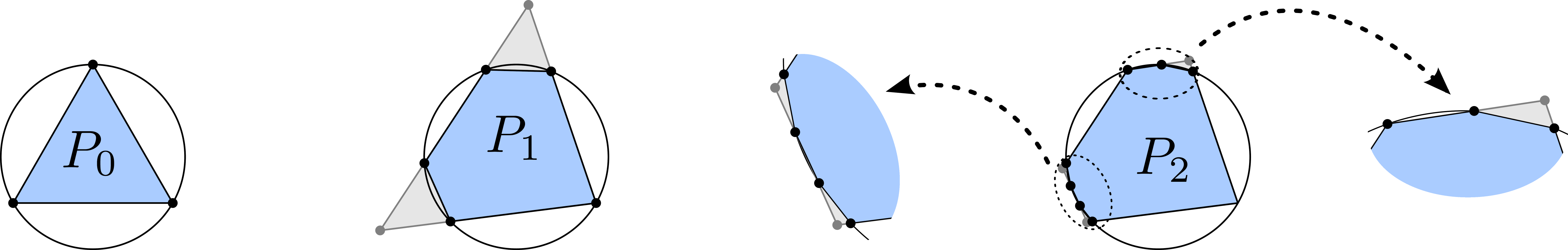}
	\caption{Steps of the proof of Proposition~\ref{prop:facet}}\label{fig:truncating} 
\end{figure}

The fact that every newly created vertex is incident to a simplex facet is
critical for this proof. It allows us to pull the vertices independently
without changing the combinatorial type of the polytope. %

Observe that this proof also works if we replace the ball by any strictly
convex body.  This means that truncated polytopes are \defn{universally
inscribable} in the sense of~\cite{gonskapadrol2016}.

\begin{proposition}\label{prop:egg}
	Every truncated polytope has a realization with all its vertices 
	on the boundary of any given smooth strictly convex body.%
\end{proposition}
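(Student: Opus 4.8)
The plan is to run the inductive construction from the proof of Proposition~\ref{prop:facet} verbatim, with the sphere~$\theSphere$ replaced by the boundary~$\partial K$ of the given smooth strictly convex body~$K$, and the ball~$\theBall$ replaced by~$K$ itself. One starts with a $d$-simplex whose vertices all lie on~$\partial K$ (such a simplex exists since $K$ is full-dimensional and strictly convex) and performs the truncations step by step: in each step every vertex $v$ to be truncated is pulled a sufficiently small distance towards the exterior of~$K$ --- along the edge $e_v$ not contained in its simplicial facet $F_v$ in all but the first step --- and one then takes the convex hull of the points where the edges meet~$\partial K$.

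Before running this, I would isolate the properties of~$\theSphere$ that the argument actually uses and check each for~$\partial K$. First, any finite subset of~$\partial K$ is in convex position: this is exactly strict convexity, since the boundary contains no segment, so every boundary point is extreme, and the constructed points are therefore genuine vertices of their convex hull. Second, when a boundary vertex $v$ is displaced slightly to a point outside~$K$, each edge joining the displaced vertex to a neighbor lying in~$K$ meets~$\partial K$ in a single point close to~$v$; the existence and uniqueness of the crossing is immediate from convexity, while smoothness ensures that it depends continuously on the displacement and stays near~$v$. Third, the convex hull of the unmoved boundary vertices together with the new crossing points realizes the truncated combinatorial type.

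With these in hand I would carry out the induction on the truncation step, maintaining the invariant of Proposition~\ref{prop:facet}: after step $k$ every vertex lies on~$\partial K$, and every vertex created in that step is incident to a simplicial facet arising from the truncation. As before, this invariant is precisely what lets one pull each new vertex independently along $e_v$ without changing the combinatorial type. The main obstacle is the third property, the verification that the convex-hull operation produces the correct face lattice. On the sphere this is transparent; for an arbitrary~$K$ the point is that smoothness makes~$\partial K$ a convex graph over its tangent hyperplane near each truncated vertex, so for sufficiently small displacements the local configuration is a small perturbation of the flat model in which $\partial K$ is replaced by that tangent hyperplane --- the setting in which truncation is combinatorially standard. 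Since a fixed combinatorial type is an open condition on point configurations (each facet being witnessed by a strict inequality), the perturbed configuration inherits the same face lattice, and the induction closes.
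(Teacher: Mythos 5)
Your proposal is correct and matches the paper exactly: the paper proves Proposition~\ref{prop:egg} simply by observing that the inductive construction of Proposition~\ref{prop:facet} goes through verbatim when $\theSphere$ is replaced by the boundary of a smooth strictly convex body. Your verification of the properties actually used (strict convexity for convex position of the new vertices, smoothness/continuity for the local perturbation argument, and openness of the combinatorial type) is a more careful write-up of the same argument, not a different one.
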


And the construction also works in more general settings. For example, it works if
$\cP_0$ is an inscribable simple polytope (e.g.\ the cube) and every vertex of
$\cP_0$ is truncated in the first step.  In this case, the first step can be
carried out by shrinking the sphere by a sufficiently small amount. From this
moment, every vertex is adjacent to a a simplex facet and the remaining steps
remain as described in the proof.

\subsection{Ridge-scribability}\label{sec:ridge}

\begin{proposition}\label{prop:ridge}
	Every stacked polytope is ridge-scribable.
\end{proposition}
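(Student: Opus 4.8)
\emph{The plan is to build the realization one simplex at a time along the dual tree, using hyperbolic reflections to attach each new simplex, and to keep the whole configuration in the ``all dihedral angles $0$'' regime.} The guiding observation is geometric: if a ridge $r=F_1\cap F_2$ has its relative interior touching $\theSphere$ and its affine hull tangent to $\theSphere$, then $\Span(r)$ is tangent to $\theLightCone$, so by Lemma~\ref{lem:orthogonal} the plane $\Span(r)^\perp=\Span(n_1,n_2)$ spanned by the two unit spacelike facet normals is tangent to $\theLightCone$ as well. Concretely, the two facet hyperplanes $F_1,F_2$ are then asymptotically parallel and share the single ideal point $t_r\in\theSphere$; their hyperbolic dihedral angle is $0$. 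I will construct ridge-scribed realizations of this special type, in which every ridge carries a relative-interior tangency point \emph{and} a tangent affine hull.

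For the base case I would take the regular $d$-simplex. Its symmetry group acts transitively on the $(d-2)$-faces, so there is a sphere centered at the barycenter tangent to all of them; by symmetry the foot of the perpendicular from the center to $\aff(r)$ is the barycenter of $r$, which lies in $\relint(r)$ and realizes the common distance. Hence each $\aff(r)$ is tangent to $\theSphere$, and the regular simplex is ridge-scribed of the desired type. Note also that each facet hyperplane genuinely crosses $\theBall$ (its normal is spacelike), since a facet carries several ridges touching $\theSphere$ and so cannot be merely tangent to it.

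The inductive step is the heart of the argument. Suppose $\cP_k$ is such a realization of the stacked polytope obtained after $k$ stackings, and I want to stack a new simplex on a facet $F$. Let $T$ be the unique top-dimensional simplex of the stacked triangulation $\cT$ having $F$ as a facet, and let $\sigma_F$ be the reflection in the hyperplane $\aff F=n_F^\perp$. Since $n_F$ is spacelike, $\sigma_F$ lies in the (orthochronous) Lorentz group: it preserves $\theLightCone$, hence fixes $\theSphere$ and the relative-interior tangency structure, while fixing $\aff F$ pointwise and exchanging its two sides. I would attach the new simplex $\Delta:=\sigma_F(T)$ to $\cP_k$ along $F=\sigma_F(F)$, on the side opposite $\cP_k$; combinatorially this is exactly a stacking on $F$. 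The crucial point is that \emph{every} ridge of $T$ is a ridge of $\cP_k$: the stacked triangulation has no interior faces of dimension $<d-1$, so each $(d-2)$-face of $T$ lies on $\partial\cP_k$ and is therefore tangent to $\theSphere$ by the induction hypothesis. As $\sigma_F$ preserves $\theSphere$ and relative interiors, all ridges of $\Delta$ are again tangent; thus $\Delta$ is a ridge-scribed simplex having $F$ among its facets.

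It then remains to check that $\cP_{k+1}:=\cP_k\cup\Delta$ is again a ridge-scribed polytope. Its ridges come in three kinds: ridges of $\cP_k$ not contained in $F$ (unchanged, hence tangent); ridges of $\Delta$ not contained in $F$ (tangent, as just argued); and the $(d-2)$-faces of $F$, which are not moved by the gluing and were tangent in $\cP_k$, so remain tangent. Hence every ridge of $\cP_{k+1}$ is tangent, and induction along the dual tree yields the statement. \emph{The main obstacle I anticipate is verifying that this union is genuinely a convex polytope}, i.e.\ that $\Delta$ is glued convexly. Here the dihedral-angle picture does the work: along a $(d-2)$-face $r\subset F$ all facets through $r$ meet $\theSphere$ in the single ideal point $t_r$, so the facet of $\cP_k$ and the facet of $\Delta$ meeting along $r$ are asymptotically parallel, with hyperbolic dihedral angle $0<\pi$; this is precisely the local convexity condition for the connected sum, and it forces the new apex $\sigma_F(v_T)$ to lie beneath the facets of $\cP_k$ adjacent to $F$. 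Finally, since the argument uses only that the gluing facet carries tangent ridges and that the attached piece is ridge-scribed, the same reflection recipe glues any two ridge-scribed polytopes along a shared facet, which is the promised extension to connected sums.
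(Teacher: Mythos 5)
Your construction is, at bottom, the same as the paper's: you generate the realization by hyperbolic reflections of a ridge-scribed simplex in its facet hyperplanes, building the stacked polytope chamber by chamber along the dual tree (the paper packages exactly this as a strongly connected subcomplex of the Coxeter complex of the universal Coxeter group generated by those reflections). Your tangency bookkeeping is correct: the facet hyperplanes cut $\theBall$, so $\sigma_F$ is an orthochronous Lorentz transformation; and since the stacked triangulation has no interior faces of dimension less than $d-1$, every ridge of $T$ is a ridge of $\cP_k$, whence all ridges of $\sigma_F(T)$ are again tangent to $\theSphere$.

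The genuine gap is at the step you yourself flagged as the main obstacle: convexity of $\cP_{k+1}=\cP_k\cup\Delta$. Your certificate --- the two facet hyperplanes through a ridge $r\subset F$ are asymptotically parallel, hence ``hyperbolic dihedral angle $0<\pi$, which is precisely local convexity'' --- proves nothing, because asymptotic parallelism is a property of the unordered pair of hyperplanes and is blind to which of the wedges along $\aff(r)$ the union actually occupies. In your own Lorentzian terms: if $n_1,n_2$ are unit spacelike normals of $\aff(G_P)$ and $\aff(G_Q)$, tangency of $\Span(r)^\perp$ to $\theLightCone$ (via Lemma~\ref{lem:orthogonal}) says only that the form degenerates on $\Span(n_1,n_2)$, i.e.\ $\Lprod{n_1}{n_2}=\pm1$. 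With normals chosen outward from the union, the value $-1$ corresponds to angle $0$ (convex gluing) and $+1$ to angle $\pi$ (reflex gluing), and deciding this sign \emph{is} the convexity assertion; so your conclusion that the new apex lies beneath the facets of $\cP_k$ adjacent to $F$ is circular. What is missing is a supporting-hyperplane argument, which is exactly what the paper supplies (in Propositions~\ref{prop:ridge} and~\ref{prop:consum}): for each ridge $r\subset\partial F$ with tangency point $t_r$, the hyperplane tangent to $\theSphere$ at $t_r$ contains $\aff(r)$ (any line through $t_r$ avoiding the open ball lies in $t_r^\perp$) and is supporting for every ridge-scribed polytope having $r$ as a ridge --- this is the argument from the proof of Lemma~\ref{lem:equivalent}, using that every facet through $r$ contains interior points of $\theBall$ (e.g.\ the barycenter of its ridge tangency points). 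Hence $\cP_k$ and $\Delta$ both lie in the single half-space $\polar{t_r}$ whose boundary contains $\aff(r)$, which forces the dihedral angle of the union along $r$ to be at most $\pi$; local convexity at all ridges of $F$ then yields global convexity of $\cP_{k+1}$. With this repair, your induction and your extension to connected sums go through as in the paper.
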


\begin{proof}
	Let $\Delta$ be a ridge-scribed realization of the simplex.  The interior of
	$\theBall$ can be regarded as the Klein model of $d$-dimensional
	hyperbolic space.  The tangency points of the ridges are all ideal in
	hyperbolic space, so the facets are all parallel or ultraparallel. Then the
	hyperbolic reflections in the facets of $\cP$ generate a universal Coxeter
	group $W$.  The associated Coxeter diagram is a complete graph with label
	$\infty$ on all the edges.  The fundamental domain of $W$ is $\Delta$.
	See~\cite{vinberg1967, vinberg1971} for more details on hyperbolic Coxeter
	groups.  Simplices in the orbit $W(\Delta)$ form a simplicial complex called
	the \emph{Coxeter complex}; see~\cite{abramenko2008}.
	
	Stacked polytopes can be seen as strongly connected subcomplexes of the
	Coxeter complex.  To see this, notice that for any $w\in W$, the simplex
	$w(\Delta)$ is a ridge-scribed simplex in the Euclidean view.  The dual graph
	of $W(\Delta)$ is the Cayley graph of $W$, which is a tree.  The convexity is
	guaranteed by the fact that, for any hyperplane $H$ that is tangent to
	$\theBall$ and contains a ridge of $W(\Delta)$, $W(\Delta)$ is contained in
	the halfspace $H^-$.  Ridge-scribed realizations for stacked polytopes are
	therefore given by the Coxeter complex. 
\end{proof}

Inspired by this proof, we now extend the proof to a generalization of stacked
polytopes.  Let $\cP$ be a ridge-scribed polytope.  The reflections in the
facets of $\cP$ again form a hyperbolic Coxeter group.  The Coxeter complex is
a polytopal cell complex, each cell being a copy of $\cP$.  Every strongly
connected subcomplex of the Coxeter complex again forms a convex polytope,
which we call a \defn{stacked $\cP$-polytope}.  Then the same argument proves
that

\begin{proposition}
	Stacked $\cP$-polytopes are ridge-scribable if $\cP$ is.
\end{proposition}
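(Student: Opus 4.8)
The plan is to mimic the proof of Proposition~\ref{prop:ridge} almost verbatim, replacing the simplex $\Delta$ by the given ridge-scribed realization of $\cP$. First I would fix a ridge-scribed realization of $\cP$ and regard the interior of $\theBall$ as the Klein model of $\Hyp^d$. Since every ridge of $\cP$ is tangent to $\theSphere$, its tangency point is an ideal point, so any two facets sharing a ridge are parallel (they meet only at that ideal point). Consequently the hyperbolic reflections in the facet hyperplanes $\aff F$ generate a hyperbolic Coxeter group $W$ with fundamental domain $\cP$ (by Poincar\'e's polyhedron theorem), which is exactly the group in the statement. Each $w\in W$ is an isometry of $\Hyp^d$, hence a Lorentz transformation, i.e.\ a projective transformation of $\Euc^d$ preserving $\theSphere$; since such a map preserves tangency to $\theSphere$, every chamber $w(\cP)$ is again a ridge-scribed copy of $\cP$ in the Euclidean view.

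Next I would record that ridge-scribability is automatic once convexity is in hand. By definition a stacked $\cP$-polytope is a strongly connected subcomplex $\cQ=\bigcup_{w\in S}w(\cP)$ of the Coxeter complex $W(\cP)$. Every ridge of $\cQ$ is either \emph{interior}, shared by two chambers of $\cQ$ and hence not a face of $\cQ$, or it is a ridge of a single chamber $w(\cP)$, in which case it inherits tangency to $\theSphere$ from that ridge-scribed chamber. Thus, as soon as $\cQ$ is known to be a convex polytope, all its ridges are tangent to $\theSphere$ and $\cQ$ is ridge-scribed.

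The main work, exactly as in Proposition~\ref{prop:ridge}, is to prove that $\cQ$ is convex. The clean way I would argue this is through supporting hyperplanes at ideal ridges. For every ridge $R$ of $W(\cP)$ with ideal tangency point $t_R$, the affine hull $\aff R$ lies in the hyperplane $H_{t_R}$ tangent to $\theSphere$ at $t_R$; as in the simplex case one checks that the whole complex $W(\cP)$ lies in the closed halfspace $H_{t_R}^-$ containing $\theBall$, since each chamber is ridge-scribed and the reflections fixing $t_R$ preserve $\theSphere$. For a boundary facet $F$ of $\cQ$ I would then invoke the standard fact that the wall $\aff F$ separates the chambers of a Coxeter complex into two halfspaces: because $\cQ$ is strongly connected, the chamber of $\cQ$ adjacent to $F$ sits on the side of $\aff F$ containing $\theBall$, and its mirror image across $\aff F$ is absent from $\cQ$, so no chamber of $\cQ$ can lie on the far side of $\aff F$. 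Hence $\aff F$ supports $\cQ$, and $\cQ$ is the intersection of the supporting halfspaces of its boundary facets, which is convex.

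I expect the delicate step to be this last convexity argument. In the simplex case the Coxeter group is universal and its Cayley graph is a tree, so every wall is a cut edge and a connected subcomplex trivially lies on one side of any boundary wall. For a general ridge-scribed $\cP$ the dual graph need not be a tree, so I must genuinely use the wall-separation property of Coxeter complexes together with the ideal-ridge structure (all dihedral angles equal to $0$) to guarantee that a strongly connected subcomplex cannot straddle any of its boundary walls. Verifying that local convexity at every ideal boundary ridge propagates to global convexity is where the real content lies; the remaining bookkeeping about faces and tangency is routine.
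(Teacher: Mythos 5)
Your setup (Klein model, reflections in the facet hyperplanes, Poincar\'e's theorem, each chamber again ridge-scribed) and your reduction of ridge-scribability to convexity match the paper. The genuine problem is in your convexity step, and it originates in your closing claim: for a ridge-scribed $\cP$ the dual graph of $W(\cP)$ \emph{is} always a tree, exactly as in the simplex case. Since every ridge of $\cP$ is tangent to $\theSphere$, adjacent facets meet at hyperbolic dihedral angle $0$ (at the ideal tangency point), and the hyperplanes of non-adjacent facets do not meet inside $\theBall$ (a polytope all of whose dihedral angles are $0$ is non-obtuse, so it has no hidden facet intersections); hence every product of two distinct generating reflections has infinite order, $W$ is the universal Coxeter group on the facet reflections of $\cP$, and its Cayley graph --- which is the dual graph of the complex $W(\cP)$ --- is a tree. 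This observation is precisely why the paper can dispose of the statement with ``the same argument'' as in Proposition~\ref{prop:ridge}: nothing in that proof uses that the fundamental chamber is a simplex.

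This is not merely a missed simplification, because your wall-separation argument is valid \emph{only} in the tree case, so your proposal relies on the very property it disclaims. The inference ``the mirror image of the adjacent chamber across $\aff F$ is absent from $\cQ$, hence no chamber of $\cQ$ lies on the far side of $\aff F$'' is not a standard fact about Coxeter complexes: a wall in general contains many panels, and a strongly connected subcomplex can cross the wall at a different panel while omitting that particular mirror chamber (the standard separation fact only guarantees that some gallery inside $\cQ$ would cross the wall \emph{somewhere}). The inference becomes correct exactly when each wall meets the complex in a single panel, equivalently when every dual edge is a cut edge, i.e.\ when the dual graph is a tree. Two further defects sit in the same step: the hyperplane $\aff F$ of a facet of a ridge-scribed chamber strictly cuts $\theBall$, so there is no ``side of $\aff F$ containing $\theBall$''; and wall separation is a statement about the chambers inside $\Hyp^d$, so by itself it does not control the hyperideal parts of the chambers lying outside $\theBall$ --- which is why the paper argues with the hyperplanes tangent to $\theSphere$ at the ridge tangency points, whose supporting property constrains the entire Euclidean complex, rather than with walls. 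Once the tree structure is restored, your argument collapses back to the paper's.
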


We can further extend the proof to connected sums of polytopes.  Recall that
two polytopes are \defn{projectively equivalent} if there is a projective
transformation that sends one to the other.  Let $\cP$ and $\cQ$ be two
polytopes, each with a facet projectively equivalent to~$F$, then the
\defn{connected sum} of $\cP$ and $\cQ$ through $F$, denoted $\cP \#_F \cQ$, is
obtained by ``gluing'' $\cP$ and $\cQ$ by identifying the projectively
equivalent facets (see \cite[Section~3.2]{richter-gebert1996}).  So the
operation of stacking is actually taking connected sum with a simplex.

We say that two polytopes are \defn{M\"obius equivalent} if there is a
M\"obius transformation (projective transformation preserving $\theSphere$)
that sends one to the other.  Then

\begin{proposition}\label{prop:consum}
	Let $\cP$ and $\cQ$ be ridge-scribed polytopes, each with a facet M\"obius
	equivalent to~$F$, then the connected sum $\cP \#_F \cQ$ is ridge-scribable.
\end{proposition}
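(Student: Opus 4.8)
\emph{Plan.} The plan is to realize $\cP$ and $\cQ$ as ridge-scribed spherical polytopes and then glue them along $F$ by means of a M\"obius transformation, so that their facets modeled on $F$ coincide and the two pieces lie on opposite sides of the shared facet hyperplane. First I would record what ridge-scribability says about the gluing facets. Since ridge-scribability is exactly $(d-2,d-2)$-scribability, Lemma~\ref{lem:property}(iii) (with $i=j=d-2\le d-2$) shows that each realization restricts, on the hyperplane spanned by its facet $\cong\Sph^{d-1}$, to a $(d-1)$-polytope all of whose facets are tangent to the induced sphere $\theSphere\cap\Span(F_\cP)$ (respectively $\theSphere\cap\Span(F_\cQ)$); that is, the gluing facets $F_\cP\subset\cP$ and $F_\cQ\subset\cQ$ are circumscribed copies of the reference facet $F$. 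By hypothesis both are M\"obius equivalent to $F$, so composing the two equivalences through $F$ yields a M\"obius transformation of $\Sph^{d-1}$ sending $F_\cQ$ to $F_\cP$ and carrying one induced sphere to the other.

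\emph{The crucial step} is to promote this transformation of $\Span(F)$ to an ambient M\"obius transformation of $\Sph^d$. Identifying the M\"obius group of $\Sph^d$ preserving $\theSphere$ with the isometry group of $\Hyp^d$ (the Lorentz transformations of $\Lor^{1,d}$ preserving $\theLightCone$), the hyperplane $\Span(F_\cP)$ carries a totally geodesic copy of $\Hyp^{d-1}$, and the stabilizer of this $\Hyp^{d-1}$ restricts surjectively onto $\mathrm{Isom}(\Hyp^{d-1})$ via the block inclusion $O^+(1,d-1)\hookrightarrow O^+(1,d)$. Hence the facet equivalence extends to a M\"obius transformation $\Phi$ of $\Sph^d$ that preserves $\theSphere$ and carries $\Span(F_\cQ)$ to $\Span(F_\cP)$. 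After applying $\Phi$ to $\cQ$, and composing if necessary with the hyperbolic reflection $\sigma_F$ in the geodesic hyperplane $\Span(F_\cP)\cap\theBall$ (itself a M\"obius transformation preserving $\theSphere$ that fixes $\Span(F_\cP)$ and interchanges its two sides), I may assume that the image of $\cQ$ shares the facet $F_\cP$ with $\cP$ and lies on the opposite side of $\Span(F_\cP)$.

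\emph{Identification and tangency.} It remains to check that $\cP\cup\Phi(\cQ)$ is a convex polytope of combinatorial type $\cP\#_F\cQ$ with every ridge tangent to $\theSphere$. The two copies of $F$ coincide and become interior, so the ridges of the union are those of $\cP$ and of $\Phi(\cQ)$ not contained in $F$, together with the facets of $F$ (the former ridges of $\cP$ lying in $F$). The first two families remain tangent because $\Phi$ and $\sigma_F$ preserve $\theSphere$; a former ridge $r\subset F$ stays tangent because its tangent $(d-2)$-flat $\Span(r)$ is unaltered and $r$ persists as a ridge once $F$ is deleted. For convexity I would argue in the Klein model: because it is projective, Euclidean convexity of $\cP\cup\Phi(\cQ)$ is equivalent to hyperbolic convexity of its intersection with $\theBall$, which reduces to the new dihedral angles along the facets of $F$. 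Each such ridge $r$ is ideal, so in $\cP$ the facets $F_\cP$ and $G$ meeting along $r$ are asymptotically parallel, i.e.\ have hyperbolic dihedral angle $0$, and likewise in $\Phi(\cQ)$; since the two pieces sit on opposite sides of $F$, the dihedral angle of the union along $r$ is $0\le\pi$, giving local convexity, while every other dihedral angle is inherited from the convex pieces $\cP$ and $\Phi(\cQ)$.

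\emph{Main obstacle.} The hardest point is the extension step: ensuring that the M\"obius equivalence between the two gluing facets extends to a \emph{global} M\"obius transformation preserving $\theSphere$ and that it places $\Phi(\cQ)$ on the correct side of $\Span(F)$. This is precisely where the subgroup structure $O^+(1,d-1)\hookrightarrow O^+(1,d)$ and the geodesic reflection $\sigma_F$ are indispensable. The convexity check is secondary but also relies on the observation that ideal ridges force vanishing hyperbolic dihedral angles, so that joining two ideal ridges across $F$ keeps the resulting angle at $0$.
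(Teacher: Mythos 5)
Your overall strategy is the same as the paper's: move ridge-scribed copies of $\cP$ and $\cQ$ by an ambient M\"obius transformation (your $\Phi$, composed if necessary with the reflection in $\Span(F)$) so that they share the facet $F$ and lie on opposite sides of it, then check that all ridges of the union remain tangent and that the union is convex. Your extension step, via the surjection of the stabilizer of a geodesic hyperplane in $\mathrm{Isom}(\Hyp^d)$ onto $\mathrm{Isom}(\Hyp^{d-1})$, is a correct and more detailed account of what the paper compresses into ``with a M\"obius transformation if necessary, we may assume $\cP\cap\cQ=F$'', and your tangency bookkeeping is fine.

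The gap is in the convexity verification, which is the crux of the proposition. First, the asserted principle ``Euclidean convexity of $\cP\cup\Phi(\cQ)$ is equivalent to hyperbolic convexity of its intersection with $\theBall$'' is false as a general statement: Euclidean convexity implies hyperbolic convexity of the intersection, but not conversely, and here everything at stake (all vertices, and all of each crease ridge except a single point) lies outside $\theBall$. Second, your angle computation is a statement about hyperbolic asymptotic angles at the one ideal point $t_r$ where a crease ridge $r$ touches $\theSphere$: it shows that inside $\theBall$, near $t_r$, the union is a slab between the asymptotically parallel walls $\Span(G)\cap\theBall$ and $\Span(G')\cap\theBall$. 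But what must be bounded is the \emph{Euclidean} dihedral angle of the union along all of $\relint(r)$, and that angle is certainly not $0$; it is a positive angle determined by the hyperplanes $\Span(G)$ and $\Span(G')$, and proving it is at most $\pi$ is exactly the content of the proposition, not something that follows formally from ``$0+0=0\le\pi$''. The implication from the hyperbolic slab picture to Euclidean local convexity is true, but it needs an argument, and none is given. The paper closes precisely this point by a different (projective) argument: for each crease ridge $r$, the hyperplane $t_r^\perp$ tangent to $\theSphere$ at the tangency point of $r$ supports both $\cP$ and $\Phi(\cQ)$ --- this is the argument in the proof of Lemma~\ref{lem:equivalent}, using that every facet through $r$ contains interior points of $\theBall$ and hence cannot cross $t_r^\perp$ --- so $t_r^\perp$ supports the union along $r$, giving local convexity there and hence convexity of $\cP\#_F\cQ$. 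Substituting this supporting-hyperplane argument for your angle argument would complete your proof.
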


\begin{figure}[htb]
	\includegraphics[width=.7\textwidth]{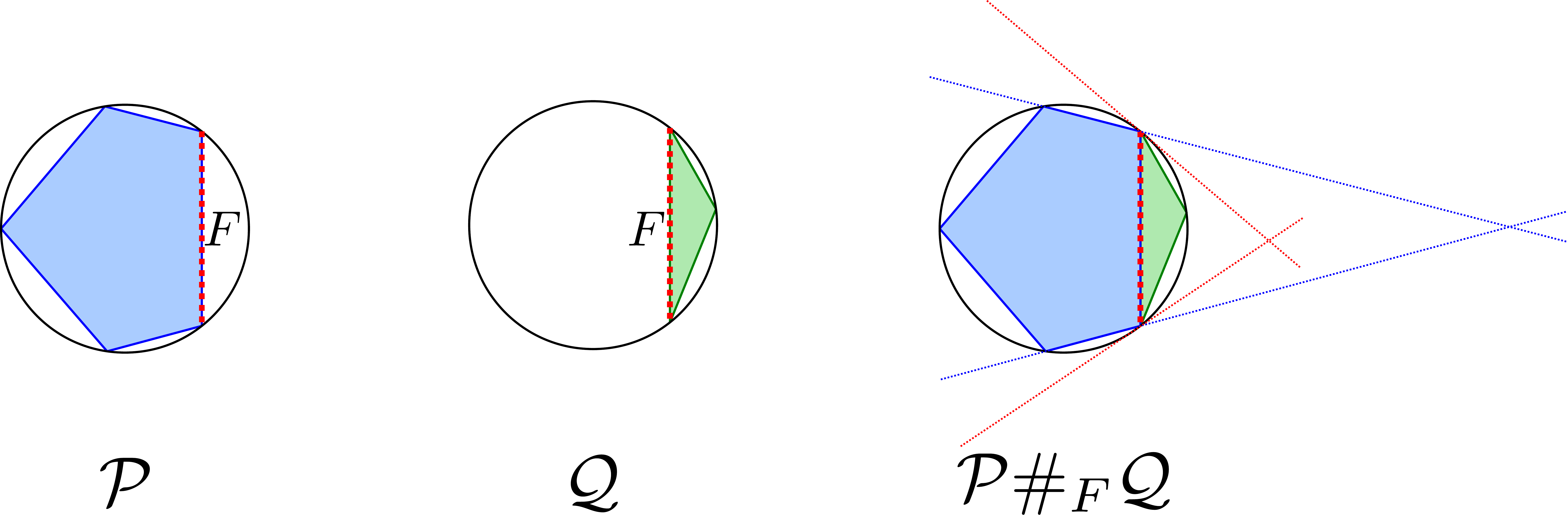}
	\caption{
		The relevant tangent hyperplanes and facet defining hyperplanes in a
		connected sum of ridge-scribed polytopes through a M\"obius equivalent
		facet.\label{fig:ConnectedSum}
	}
\end{figure}

\begin{proof}
	With a M\"obius transformation if necessary, we may assume that $\cP$ and
	$\cQ$ are ridge-scribed and $\cP\cap \cQ=F$.  For any ridge $R$ adjacent to
	$F$, the hyperplane that is tangent to $\theSphere$ and contains $R$ is
	supporting both for $\cP$ and for $\cQ$ (see the proof of
	Lemma~\ref{lem:equivalent}).  So the polytope $\cP \#_F \cQ = \cP \cup \cQ$
	is convex and ridge-scribed by construction.  
\end{proof}

Ridge-scribed simplices are all M\"obius equivalent;
see~\cite[Lemma~7]{eppstein2003} for the dual statement.  We can therefore
regard Proposition~\ref{prop:ridge} as a special case of
Proposition~\ref{prop:consum}.

Finally, we prove an interpretation of Proposition~\ref{prop:ridge} in terms of
ball packings. For every point $\bx\in\Euc\setminus\theBall$, the part of
$\theSphere$ visible from $\bx$ is a spherical cap on $\theSphere$.  For an
edge-scribed polytope, the caps corresponding to the vertices have disjoint
interiors.  After a stereographic projection, they form a ball packing in
Euclidean space whose tangency graph is isomorphic to the $1$-skeleton of the
polytope; see~\cite{chen2016}.  The dual version of
Proposition~\ref{prop:ridge} says that every truncated polytope is
edge-scribable.  Therefore,

\begin{corollary}
	The $1$-skeleton of every truncated $d$-polytope is the tangency graph of a
	ball packing in dimension $d-1$.
\end{corollary}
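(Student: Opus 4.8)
The plan is to derive the statement by polarizing Proposition~\ref{prop:ridge} and then invoking the cap-to-ball dictionary recalled just above the corollary. First I would observe that a truncated $d$-polytope is by definition the polar of a stacked $d$-polytope, and that polarity carries a ridge-scribed realization to an edge-scribed one. Indeed, under the polarity induced by the Lorentzian scalar product a $(d-2)$-face $R$ of a $d$-polytope $\cP$ corresponds to the $1$-face $\ass{R}$ of $\polar{\cP}$, and by Lemma~\ref{lem:orthogonal} the span of $R$ is tangent to $\theSphere$ exactly when the span of $\ass{R}$ is. Since Proposition~\ref{prop:ridge} furnishes a ridge-scribed realization of any stacked polytope, its polar is an edge-scribed realization of the corresponding truncated polytope; by Lemma~\ref{lem:newequivalent} I may assume this realization is a bounded Euclidean $d$-polytope $\cP$ containing $\theBall$ in its interior.

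Given such an edge-scribed $\cP$, I would attach to each vertex $v$ the closed visible spherical cap $C_v := \set{p\in\theSphere}{\Eprod{v}{p}\ge 1}$, which is a proper cap because $\cP$ is bounded. The decisive geometric fact is that distinct caps have disjoint interiors, and that $C_v$ and $C_w$ intersect exactly when $v$ and $w$ span an edge. If $e=vw$ is an edge, it is tangent to $\theSphere$ at a point $t_e$ with $\Eprod{v}{t_e}=\Eprod{w}{t_e}=1$, so $t_e\in\partial C_v\cap\partial C_w$ and the two caps are tangent there; conversely, two caps could share an interior point only if the segment joining the corresponding vertices met the interior of $\theBall$, which is impossible when every edge is tangent to $\theSphere$ and $\cP\supset\theBall$.

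Finally I would transport the caps to $\Euc^{d-1}$ by a stereographic projection. The complement $\theSphere\setminus\bigcup_v C_v$ equals $\theSphere\cap\interior\polar{\cP}$, which is $\theSphere$ minus the finitely many edge-tangency points and hence nonempty; choosing the projection pole $N$ in this open set ensures that $N\notin C_v$ for every $v$, so each $C_v$ maps to a genuine round ball rather than to a half-space or to the exterior of a ball. Since stereographic projection is conformal and sends spherical caps to balls while preserving tangency, the images of the $C_v$ form a packing of balls in $\Euc^{d-1}$ whose tangency graph has one node per vertex of $\cP$ and one edge per tangent pair of caps. By the previous paragraph this is precisely the $1$-skeleton of the truncated polytope; see~\cite{chen2016} for a detailed account of this dictionary.

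The step I expect to demand the most care is the claim that the caps are mutually tangent exactly along edges and otherwise have disjoint interiors, that is, that the combinatorics of the packing reproduces the $1$-skeleton and nothing more. Verifying this, together with the selection of a projection pole $N$ avoiding every cap so that no cap degenerates, is the crux; the reduction to edge-scribability via polarity is then routine.
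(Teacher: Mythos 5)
Your high-level route --- polarize Proposition~\ref{prop:ridge} to get edge-scribability of truncated polytopes, then pass to the visible caps and stereographically project --- is exactly how the paper motivates the corollary in the paragraph preceding it, with the cap-to-packing dictionary outsourced to~\cite{chen2016}. The paper's own proof environment, however, is a different, self-contained argument: an induction that truncates vertices one at a time and updates the ball packing by an inversion in the boundary sphere of the ball being removed, never invoking Proposition~\ref{prop:ridge}. So your reduction is legitimate in principle (modulo the small point that the polarity step is Lemma~\ref{lem:property}(ii), whose strong version needs Lemma~\ref{lem:polar} and not only Lemma~\ref{lem:orthogonal}), but you then attempt to re-prove the dictionary yourself, and that is where the argument breaks.

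The step you correctly single out as the crux is stated backwards. With $C_v=\set{p\in\theSphere}{\Eprod{v}{p}\ge 1}$, if $p$ lies in the interiors of both $C_v$ and $C_w$, then $\Eprod{v}{p}>1$ and $\Eprod{w}{p}>1$, so every $x\in[v,w]$ satisfies $\Eprod{x}{p}>1\ge\Eprod{y}{p}$ for all $y\in\theBall$; the segment then lies strictly beyond the tangent hyperplane at $p$ and \emph{misses} $\theBall$ entirely. This is precisely the paper's Corollary~\ref{cor:edges}: the caps have disjoint interiors \emph{if and only if} $[v,w]$ strongly cuts $\theSphere$. Your claim that overlapping caps force $[v,w]$ to meet $\interior\theBall$ is the exact converse of the truth, and the ``impossibility'' you invoke rests on the further false premise $\theBall\subset\cP$: an edge-scribed polytope need not contain its midsphere (for the edge-tangent regular simplex, the sphere pokes out through every facet), and neither Lemma~\ref{lem:equivalent} nor Lemma~\ref{lem:newequivalent} asserts such containment --- they give boundedness and, in the first case, only the center in the interior. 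What actually has to be shown is that for every pair of \emph{non-adjacent} vertices the segment $[v,w]$ passes through the open ball; that is the nontrivial half of the dictionary, and your proposal contains no argument for it. (A smaller slip: $\theSphere\setminus\bigcup_v C_v$ is not $\theSphere$ minus finitely many points; it is a union of open regions, one per facet --- still nonempty, so your choice of projection pole survives, but for a different reason.) If you replace your second paragraph by Corollary~\ref{cor:edges} together with the non-adjacency statement from~\cite{chen2016}, the rest of your argument goes through.
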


Here we provide a self-consistent proof independent to
Proposition~\ref{prop:ridge}.  In fact, the two proofs are essentially the
same, as inversions correspond to hyperbolic reflections; see~\cite{chen2015}.

\begin{figure}[htpb]
	\includegraphics[width=.95\textwidth]{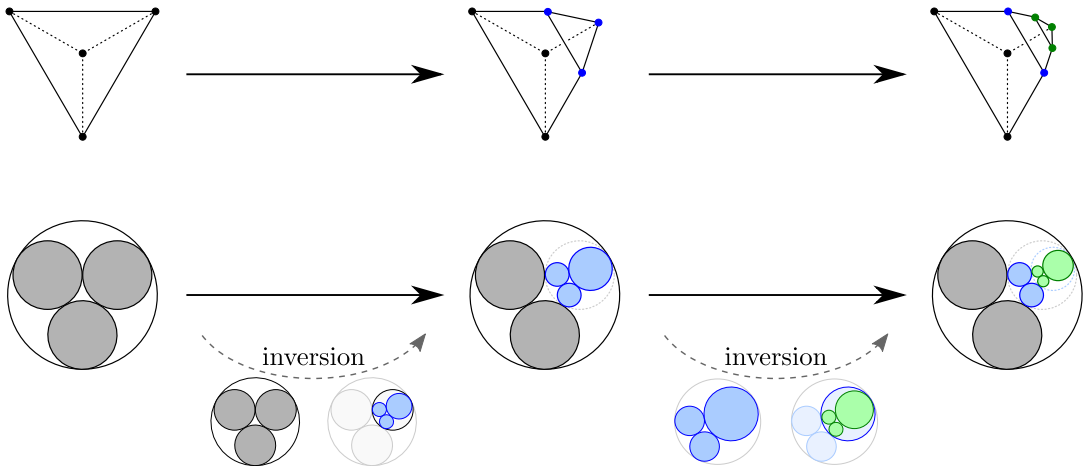}
	\caption{
		The graph of a truncated polytope as a ball packing through a series of
		inversions.\label{fig:BallPackings}}
\end{figure}

\begin{proof}
	We construct the truncated polytope and the ball packing in parallel, and
	keep the $1$-skeleton and the tangency graph isomorphic.  We begin with an
	edge-tangent $d$-simplex $\Delta$ and the corresponding configuration of
	$d+1$ pairwise tangent balls $B_0, \dots, B_d$ of dimension $d-1$.  
	
	We now proceed by induction, truncating the vertices one by one. Let $B$ be
	the ball corresponding to the vertex to be truncated.
	
	If $B$ is in the initial packing of $d+1$ balls, say $B=B_0$, then the
	inversion in $\partial B$ sends $B_1,\dots,B_d$ into pairwise tangent balls
	$B'_1,\dots,B'_d$ in the interior of $B$, and $B'_i$ is tangent to $B_i$, $1
	\le i \le d$.  Since the tangency points are preserved, replacing $B$ by
	these primed balls gives the desired packing. 
	
	Otherwise, $B$ appeared in some latter truncation step, when a ball $B'_d$
	was replaced by $d$ pairwise tangent balls $B'_0,\dots,B'_{d-1}$. Say
	$B=B'_0$.  They are all tangent from the interior to $\partial B'_d$, which
	we can regard the exterior of $B'_d$ as a ball of negative curvature.  The
	inversion in $\partial B$ sends $B'_1,\dots,B'_d$ into pairwise tangent balls
	$B''_1,\dots,B''_d$ in the interior of $B$. Again, the tangency points of
	these balls with $B$ coincide with those of $B$ with its neighbors. Replacing
	$B$ by these doubly primed balls gives the desired packing.

	The corollary then follows by induction. See also Figure~\ref{fig:BallPackings}.
\end{proof}

\subsection{The $(i,i+1)$ scribability}

Finally, we prove Theorem~\ref{thm:stack01} by constructing stacked $d$-polytopes
that are not $(i,i+1)$-scribed for $0 \le i \le d-4$.

We still regard the interior of $\theBall \subset \Euc^d$ as the Klein model
of the hyperbolic space $\Hyp^d$.  To study the vertex figure $\cP/v$, consider
a surface $\Sigma$ that intersects perpendicularly all the hyperbolic geodesics
that pass through $v$ (in $\Euc^d$).  There are three cases: If $v$ is a
point of $\Hyp^d$ (in the interior of~$\theBall$), $\Sigma$ is a $(d-1)$-sphere
$\Sph^{d-1}$ centered at $v$; this is more clear if we assume $v$ is the center
of~$\theBall$, or simply use the Poincar\'e ball model for $\Hyp^d$.  If $v$ is
an ideal point of $\Hyp^d$ (on the boundary of~$\theBall$), then $\Sigma$ is a
horosphere based at $v$, which can be identified to the Euclidean space
$\Euc^{d-1}$; this can be easily seen with Poincar\'e half-space model,
cf.~\cite[\S~6.4]{ratcliffe2006}.  Finally, if $v$ is hyperideal for $\Hyp^d$
(in the exterior of~$\theBall$), then $\Sigma$ is the totally geodesic surface
given by the intersection $\theBall \cap v^\perp$, which can be identified to
the hyperbolic space $\Hyp^{d-1}$.

Now the vertex figure $\cP/v$ can be realized as the intersection of $\Sigma$
and the cone over $\cP$ with apex $v$, which is a polytope in $\Sph^{d-1}$,
$\Euc^{d-1}$ or $\Hyp^{d-1}$ if $v$ is in the interior, boundary or exterior 
of~$\theBall$, respectively.  Since $\Sigma$ is perpendicular to all the
hyperbolic geodesics through $v$, the dihedral angles of $\cP$ are preserved in
$\cP \cap \Sigma$; cf.~\cite{ratcliffe2006}.  More generally, a face figure
$\cP/F$ can be obtained by consecutively taking vertex figures at each vertices
of $F$.  So we can realize $\cP/F$ as a spherical, Euclidean or hyperbolic
polytope if $F$ is (in the strong sense) strictly cutting, tangent or strictly
avoiding $\theSphere$, respectively.

We will need the following lemma:

\begin{lemma}\label{lem:dihedral}
	Let $\cP$ be a $(0,d-3)$-scribed $d$-simplex, and $F$ be a facet of $\cP$.
	If we regard~$\theBall$ as the Klein model of the hyperbolic space $\Hyp^d$,
	then the hyperbolic dihedral angles at the ridges incident to $F$ sum up to
	at least $\pi$.
\end{lemma}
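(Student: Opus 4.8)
\emph{Plan.} I would work throughout in the Klein model of $\Hyp^d$ on $\theBall$ and translate the scribability hypothesis into the language of dihedral angles. Write the facets of the simplex as $F_0=F,F_1,\dots,F_d$, let $n_a$ denote the outer unit (spacelike) normal of $F_a$ in $\Lor^{1,d}$, and let $\theta_{ab}\in(0,\pi)$ be the dihedral angle along the ridge $F_a\cap F_b$, normalized so that $\sprod{n_a}{n_b}=-\cos\theta_{ab}$. The $d$ ridges incident to $F$ are the $F_0\cap F_k$ with $1\le k\le d$, so the quantity to bound is $\sum_{k=1}^{d}\theta_{0k}$. First I would invoke Lemma~\ref{lem:property}\ref{it:otherij}: since $\cP$ is $(0,d-3)$-scribed it is also $(0,d-2)$- and $(0,d-1)$-scribed, so every ridge and every facet (strictly) cuts $\theSphere$, whence all the $\theta_{ab}$ are genuine finite hyperbolic dihedral angles; meanwhile every vertex strongly avoids $\theSphere$, i.e.\ is hyperideal (or ideal in the degenerate tangent case, which is what forces the non-strict inequality).

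The key step uses the face-figure description from the paragraph preceding the lemma. For each triple of indices $\{a,b,c\}$ the intersection $R_{abc}=F_a\cap F_b\cap F_c$ is a $(d-3)$-face of the simplex, lying in exactly the three facets $F_a,F_b,F_c$; by hypothesis it cuts $\theSphere$, so its face figure $\cP/R_{abc}$ is a spherical polygon, and since only three facets contain $R_{abc}$ it is in fact a spherical triangle, whose three angles are precisely the preserved dihedral angles $\theta_{ab},\theta_{ac},\theta_{bc}$. As a spherical triangle has angle sum at least $\pi$, this yields
\[
\theta_{ab}+\theta_{ac}+\theta_{bc}\ \ge\ \pi
\]
for every triple; applying it to the $\binom{d}{2}$ triples of the form $\{0,j,k\}$ and summing gives
\[
(d-1)\sum_{k=1}^{d}\theta_{0k}\ +\ \sum_{1\le j<k\le d}\theta_{jk}\ \ge\ \binom{d}{2}\pi .
\]

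To finish I must remove the internal angles $\theta_{jk}$ ($1\le j<k\le d$), and this is where the hyperideality of the apex $v_0$ opposite $F$ enters: the vertex figure $\cP/v_0$ is a genuine hyperbolic $(d-1)$-simplex whose dihedral angles are exactly the $\theta_{jk}$, equivalently the symmetric matrix $M=\bigl(-\cos\theta_{jk}\bigr)_{1\le j,k\le d}$ with unit diagonal has Lorentzian signature $(d-1,1)$ and so fails to be positive definite. Since $\binom{d}{2}-\tfrac{(d-1)(d-2)}{2}=d-1$, the bound $\sum_{k}\theta_{0k}\ge\pi$ follows from the displayed inequality as soon as one establishes the upper bound $\sum_{j<k}\theta_{jk}\le \tfrac{(d-1)(d-2)}{2}\pi$; I would derive this from the signature condition on $M$ (a hyperbolic simplex cannot have too-large dihedral angles), with the extremal/ideal regular simplex as calibration, where cutting of the $(d-3)$-faces forces each angle $\ge\pi/3$ while hyperideality forces each $<\arccos\frac1{d-1}$, giving $\sum_k\theta_{0k}\ge d\cdot\frac{\pi}{3}>\pi$ precisely when $d>3$. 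The main obstacle is exactly this last step --- converting the Gram-matrix signature of the vertex figure into a clean universal upper bound on its dihedral-angle sum (for which Schl\"afli's differential volume formula, showing the sum is maximized at the ideal regular simplex, is the natural tool) --- and it is here that the hypothesis $d>3$ is indispensable, since for $d=3$ a regular hyperideal tetrahedron already violates the statement.
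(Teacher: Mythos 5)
The first half of your argument is exactly the paper's proof: the face figure of each $(d-3)$-face $F_0\cap F_j\cap F_k$ is a spherical or Euclidean triangle (Euclidean when that face is tangent to $\theSphere$), so $\theta_{0j}+\theta_{0k}+\theta_{jk}\ge\pi$, and summing over the $\binom{d}{2}$ triples containing $0$ gives $(d-1)\sum_{k=1}^{d}\theta_{0k}+\sum_{1\le j<k\le d}\theta_{jk}\ge\binom{d}{2}\pi$; one then needs $\sum_{1\le j<k\le d}\theta_{jk}\le\binom{d-1}{2}\pi$ for the vertex figure at the apex $v_0$. But this last inequality, which you yourself flag as ``the main obstacle'', is precisely where your proposal has a genuine gap: you never prove it, and it is not a routine consequence of the Gram-matrix signature. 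It is a known, nontrivial theorem of H\"ohn~\cite{hohn1953} (see also~\cite{gaddum1956} and~\cite{au2008}) that the dihedral angles of a Euclidean or hyperbolic $(d-1)$-simplex sum to at most $\binom{d-1}{2}\pi$, and citing this theorem is exactly how the paper closes the proof. Note also that the apex may be tangent to $\theSphere$ (avoiding includes tangency), so the vertex figure can be Euclidean rather than hyperbolic, and any bound must cover both cases. Your proposed Schl\"afli calibration is false as stated: the dihedral-angle sum of hyperbolic $(d-1)$-simplices is \emph{not} maximized at the ideal regular simplex --- small, nearly degenerate simplices have sums approaching the supremum $\binom{d-1}{2}\pi$, while the ideal regular triangle, for instance, has angle sum $0$.

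Your closing paragraph is wrong on two further counts. First, the claim that cutting of the $(d-3)$-faces forces each $\theta_{ab}\ge\pi/3$ holds only for the regular simplex: a spherical triangle with angle sum $\ge\pi$ can have an arbitrarily small angle, and the lemma concerns arbitrary $(0,d-3)$-scribed simplices. Second, the lemma carries no hypothesis $d>3$ and is in fact true for $d=3$: there, $(0,0)$-scribed means that every vertex both cuts and avoids $\theSphere$, hence is tangent to it by Lemma~\ref{lem:reduce}, so $\cP$ is an ideal tetrahedron, and the dihedral angles along the three edges of any face of an ideal tetrahedron sum to exactly $\pi$. A regular \emph{hyperideal} tetrahedron is not $(0,0)$-scribed (its vertices do not cut $\theSphere$), so it is not a counterexample to the statement. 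In short, your skeleton matches the paper's, but the load-bearing inequality is missing, and the substitute arguments you sketch do not repair it.
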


\begin{proof}
	Since the $(d-3)$-faces cut $\theSphere$, their links are spherical or
	Euclidean triangles, i.e.\ the dihedral angles at the ridges incident to a
	$(d-3)$-face sum up to at least $\pi$.  If we consider only the $(d-3)$-faces
	incident to $F$, the dihedral angles at the ridges incident to them sum up to
	at least ${d \choose 2}\pi$.
	
	However, this summation also includes some ridges not incident to $F$.  These
	ridges are all incident to the vertex $v$ that is not in $F$.  Since $v$
	avoids $\theSphere$, the vertex figure is a hyperbolic or Euclidean
	$(d-1)$-simplex.  By a result of H\"ohn~\cite{hohn1953} (see
	also~\cite{gaddum1956}) and~\cite{au2008}), the dihedral angles of these
	ridges sum up to at most ${d-1 \choose 2}\pi$.  Subtracting this from the
	summation above yields at least ${d \choose 2}\pi - {d-1 \choose 2}\pi =
	(d-1) \pi$.

	Furthermore, every ridge is counted $d-1$ times, which is the number of
	$(d-3)$-faces incident to each ridge.  So the sum of the dihedral angles at
	the ridges incident to $F$ sum up to at least~$\pi$.
\end{proof}

By
Lemma~\ref{lem:property}, Theorem~\ref{thm:stack01} is derived from the
following proposition.

\begin{proposition}\label{prop:stack01}
	There is a stacked $4$-polytope that is not $(0,1)$-scribable.
\end{proposition}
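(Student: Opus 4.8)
The plan is to construct a specific stacked $4$-polytope and derive a contradiction from the assumption that it admits a $(0,1)$-scribed realization, using the dihedral-angle machinery developed just before the statement, and in particular Lemma~\ref{lem:dihedral}. The natural candidate is a stacked $4$-polytope whose dual tree has a vertex of high degree, so that many simplices of the stacked triangulation share a common facet or low-dimensional face; concretely, I would take the simplex and stack repeatedly onto the facets surrounding a fixed ridge or vertex, so as to force many tetrahedral cells to meet around a single $1$-face. The idea is that, in a $(0,1)$-scribed realization, the vertices strictly avoid $\theSphere$ and the edges strictly cut $\theSphere$, so by the face-figure discussion preceding the lemma each vertex figure is a hyperbolic polytope and each cell $\Delta$ is a $(0,1)$-scribed $4$-simplex to which Lemma~\ref{lem:dihedral} applies.

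The heart of the argument is an angle-counting obstruction around a shared face. Lemma~\ref{lem:dihedral} tells me that for any $(0,1)$-scribed $4$-simplex and any facet $F$, the hyperbolic dihedral angles at the two ridges of that simplex incident to $F$ sum to at least $\pi$. I would exploit this by locating, in the stacked polytope, an internal ridge $R$ of the stacked triangulation around which the simplices close up: the dihedral angles of the cells around $R$ must sum to exactly $2\pi$ (if $R$ is interior to the polytope) or to the honest hyperbolic dihedral angle of the polytope at $R$ (if $R$ is a genuine ridge on the boundary). Pairing the Lemma's lower bounds of $\pi$ per facet-incident ridge against the total angle available around $R$ should overdetermine the system once enough cells are forced to meet there: each cell contributes at least $\pi$ across its two relevant ridges, but only $2\pi$ total angle is available, so at most two cells can fit, which contradicts the high degree built into the dual tree.

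The key steps, in order, would be: (i) specify the combinatorial stacked $4$-polytope, chosen so that the dual tree has a node of degree at least three, guaranteeing three cells sharing a ridge; (ii) invoke the preceding discussion to conclude that in any $(0,1)$-scribed realization every $4$-simplex cell is $(0,1)$-scribed and every vertex is hyperideal, so all cells sit in $\Hyp^4$ with well-defined hyperbolic dihedral angles; (iii) apply Lemma~\ref{lem:dihedral} to each cell to get the per-cell lower bound of $\pi$ summed over the pair of ridges incident to the shared facet; and (iv) sum these bounds around the shared ridge and compare with the total available dihedral angle (at most $2\pi$, strictly less when the ridge is on the boundary), reaching a numerical contradiction when three or more cells are present.

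The main obstacle I anticipate is step~(iv): correctly bookkeeping \emph{which} dihedral angles are being bounded and around \emph{which} face they accumulate, so that the Lemma's facet-indexed bounds actually align with the ridge around which the cells close up. There is a genuine risk of double-counting or of the lemma's ``ridges incident to $F$'' not being the same ridges as those meeting at the shared face of the stacked triangulation, so I would need to choose the stacking pattern and the shared face carefully so that the geometry forces the angles to overlap in exactly the way the counting requires. Once that alignment is pinned down, the contradiction should follow from elementary inequalities, and Theorem~\ref{thm:stack01} then follows for all relevant $i$ by the inductive face-figure argument via Lemma~\ref{lem:property}.
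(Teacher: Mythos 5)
Your proposal has a genuine gap at its core step (iv), and it is exactly the risk you flag yourself: the bound of Lemma~\ref{lem:dihedral} cannot be localized to the shared ridge. First, a correction: for a $4$-simplex, a facet $F$ is a tetrahedron containing \emph{four} ridges, not two, so the lemma bounds a sum of four dihedral angles. More importantly, that lower bound of $\pi$ is spread over those four ridges, and nothing in the lemma (or in the $(0,1)$-scribed condition) gives a positive lower bound on the angle at any \emph{single} ridge; the angle of one cell at your shared ridge $R$ can be arbitrarily small. So from $m$ cells sharing $R$ you cannot extract $\pi$ (or any fixed amount) per cell \emph{at} $R$, and the comparison with the available angle collapses. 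Note also that your $2\pi$ case never occurs: the stacked triangulation has no interior faces of dimension $<d-1$, so every ridge of the triangulation is a genuine $2$-face of the polytope, and the budget at each ridge is the polytope's dihedral angle, which convexity forces below $\pi$. Even the sharpest local count fails: if $m$ cells share an edge $e$, the link of $e$ is a tree-like gluing of $m$ spherical or Euclidean triangles with $m+2$ vertices, giving only $m\pi \le \text{(total angle)} < (m+2)\pi$, which is no contradiction for any $m$. This is why no amount of stacking around a single low-dimensional face yields an obstruction by this method.

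The paper's proof sidesteps this by making the count global rather than local. It takes the polytope obtained by stacking on \emph{every} facet of the $4$-simplex and then stacking again on \emph{every} facet, whose stacked triangulation has $1+5+20=26$ cells: $6$ interior simplices (all of whose facets are interior) and $20$ exterior ones. It then sums dihedral angles over \emph{all} $40$ ridges incident to interior facets. Each interior simplex contributes at least $10\pi/3$ (each of its $10$ edges cuts $\theSphere$, so its link is a spherical or Euclidean triangle with angle sum at least $\pi$, and each ridge is counted three times); each exterior simplex contributes at least $\pi$ by Lemma~\ref{lem:dihedral} applied to its unique interior facet. The total is at least $6\cdot\frac{10\pi}{3}+20\pi=40\pi$ distributed over $40$ ridges, so by pigeonhole some ridge carries total angle at least $\pi$, contradicting convexity. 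The essential idea your outline misses is that the facet-indexed bound of Lemma~\ref{lem:dihedral} only becomes usable once every ridge it touches is included in the sum being compared against the convexity budget; that forces the two-round stacking on all facets and the global bookkeeping, rather than a high-degree node in the dual tree.
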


\begin{proof}
	Consider the stacked $4$-polytope obtained by stacking on every facet of a
	simplex, and then stacking again on every facet.  The stacked triangulation
	of the resulting polytope consists of $1+5+20=26$ $4$-simplices.  Twenty of
	them have boundary facets, we call them \defn{exterior simplices}.  The remaining
	six only have interior facets, we call them \defn{interior simplices}.

	There are $40$ ridges incident to the interior facets, and we want to
	estimate the sum of the hyperbolic dihedral angles at these ridges.  Each
	interior simplex contributes at least $10\pi/3$.  To see this, notice that
	the link of each edge is a spherical or Euclidean triangle, and so the
	adjacent dihedral angles sum up to at least $\pi$, and that each ridge is
	incident to $3$-edges, so each angle is counted three times.  On the other
	hand, each exterior simplex shares a facet with an interior simplex. Hence,
	by Lemma~\ref{lem:dihedral}, it contributes with at least $\pi$ to the sum of
	the dihedral angles of ridges incident to interior simplicies.  Hence, the
	dihedral angles at these $40$ ridges sum up to at least $40\pi$.

	Therefore, there is at least one ridge at which the hyperbolic dihedral angle
	is at least $\pi$, which destroys the convexity of the polytope.
\end{proof}

Bao and Bonahon~\cite{bao2002} characterized the dihedral angles of
$(0,1)$-scribed $3$-polytopes (hyperideal polyhedra) and proved that the
dihedral angles determine a polytope uniquely up to hyperbolic isometry; see
also~\cite{schlenker2005} for the connection to circle configurations.

\section{Cyclic polytopes}\label{sec:neighborly}
The main result of this section is that even-dimensional cyclic polytopes with
sufficiently many vertices are not $(1,d-1)$-scribable.  We also investigate
odd-dimensional cyclic polytopes and neighborly polytopes in general. 

A $d$-polytope is \defn{$k$-neighborly} if every $k$ vertices form a face.
Since the only $k$-neighborly $d$-polytope with $k>\lfloor d/2 \rfloor$ is the
simplex, we call a $d$-polytope simply \defn{neighborly} if it is $\lfloor d/2
\rfloor$-neighborly. 

The most important examples of neighborly polytopes are cyclic polytopes.
Consider a curve~$\gamma$ of \defn{order~$d$}, which means that each hyperplane
intersects $\gamma$ in at most $d$ points, such as the $d$-dimensional moment
curve \((t,t^2,\dots,t^d)\).  Take the convex hull of $n$ distinct points on
$\gamma$.  That is,
\[
	\conv(\gamma(t_1),\gamma(t_2),\dots,\gamma(t_n))
\]
for $n$ distinct parameters $t_1<t_2<\dots<t_n$.  Then the combinatorial type
of this polytope (and, even more, the oriented matroid defined by the points
$\gamma(t_i)$) does not depend on the choice of the parameters~$t_i$. We call
any polytope of this combinatorial type a \defn{cyclic $d$-polytope} with $n$
vertices, and denote it by $\cyc{d}{n}$.  If we identify the vertices of
$\cyc{d}{n}$ with the indices $[n]=\{1,\dots,n\}$, the combinatorics of
$\cyc{d}{n}$ are described by the following criterion, called \defn{Gale's
evenness condition} (cf. \cite[Section~4.7]{grunbaum2003},
\cite[Theorem~0.7]{Ziegler1995}).

\begin{proposition}\label{prop:evenness}
	Let $I\subset[n]$ be a set of $d$ vertices. Then $I$ indexes a facet of
	$\cyc{d}{n}$ if and only if for any two vertices $j<k$ in $[n]\setminus I$,
	the set $\{i\in I\mid j<i<k\}$ contains an even number of vertices.
\end{proposition}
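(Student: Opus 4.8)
The plan is to reduce the facet-defining condition to a single sign computation for a one-variable polynomial attached to the candidate hyperplane. I would work with the explicit moment curve $\gamma(t)=(t,t^2,\dots,t^d)$ and vertices $\gamma(t_1),\dots,\gamma(t_n)$ with $t_1<\dots<t_n$; by the parameter-independence of the combinatorial type recalled just before the statement, this loses no generality. Write $I=\{i_1<\dots<i_d\}$. The first thing to record is that any $d+1$ points on $\gamma$ are affinely independent, since the homogenized coordinate matrix has a nonzero Vandermonde determinant $\prod_{r<s}(t_{i_s}-t_{i_r})$. In particular the $d$ points indexed by $I$ span a unique affine hyperplane $H_I$, and $\cyc{d}{n}$ is simplicial, so a facet has exactly $d$ vertices; thus $I$ indexes a facet precisely when every remaining vertex lies strictly on one side of $H_I$.

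Next I would encode membership of $\gamma(t)$ in $H_I$ by the vanishing of the $(d+1)\times(d+1)$ determinant
\[
	P(t)=\det\begin{pmatrix} 1 & 1 & \cdots & 1 \\ \gamma(t) & \gamma(t_{i_1}) & \cdots & \gamma(t_{i_d}) \end{pmatrix},
\]
where each $\gamma$ contributes a column of its $d$ coordinates. Expanding along the column of $\gamma(t)$ shows that $P$ is a polynomial of degree $d$ in $t$ whose roots are exactly $t_{i_1},\dots,t_{i_d}$, since the determinant vanishes whenever two columns coincide. Hence $P(t)=c\prod_{s=1}^{d}(t-t_{i_s})$, where $c$ is, up to a fixed sign, the positive Vandermonde determinant of the $t_{i_s}$. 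The crucial point is that the sign of $c$ does not depend on $t$, so for $m\in[n]\setminus I$ the sign of $P(t_m)$ equals $\operatorname{sign}(c)\cdot(-1)^{N_m}$, where $N_m=\#\{i\in I\mid i>m\}$ counts the negative factors $(t_m-t_{i_s})$.

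With this in hand, $I$ indexes a facet if and only if the parity of $N_m$ is constant over all $m\in[n]\setminus I$. To finish I would compare two indices $j<k$ in $[n]\setminus I$: since neither lies in $I$, one gets $N_j-N_k=\#\{i\in I\mid j<i<k\}$, so the parities of $N_j$ and $N_k$ agree exactly when $\{i\in I\mid j<i<k\}$ has even cardinality. Quantifying over all such pairs yields Gale's evenness condition, giving both implications at once. I expect the only delicate points to be bookkeeping: fixing the sign of $c$ once and for all, and checking that the boundary cases $i=j$ or $i=k$ never occur, precisely because $j,k\notin I$. The entire geometric content is carried by the factorization of $P$, so there is no genuine obstacle beyond this careful sign tracking.
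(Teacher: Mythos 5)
Your proof is correct, and it is essentially the classical argument for Gale's evenness condition found in the references the paper cites for this proposition (Gr\"unbaum, Section~4.7, and Ziegler, Theorem~0.7) --- the paper itself gives no proof, quoting the statement as known. In particular, your determinant $P(t)$ coincides, up to a nonzero constant of fixed sign, with the polynomial $\prod_{s}(t-t_{i_s})$ used in the standard proof, and your reduction of the facet condition to constancy of the parity of $N_m$ over $m\notin I$ is exactly the textbook sign-counting argument, so there is no gap.
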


It is well-known that cyclic polytopes are inscribable;
see~\cite{caratheodory1911} \cite[p.~67]{grunbaum1987}
\cite[p.~521]{seidel1991} \cite[Proposition~17]{gonska2013}. This implies that
they are $(0,j)$-scribable for any $j\geq 0$.  We will however see that, in
even dimensions, cyclic polytopes provide examples that are not
$(i,j)$-scribable for each $i>0$.  In particular, cyclic polytopes behave
poorly with respect to $k$-scribability, as indicated by
Theorem~\ref{thm:cyclic}, which we recall below.

\begin{reptheorem}{thm:cyclic}
	For any $d>3$ and $1\le k\le d-1$, a cyclic $d$-polytope with sufficiently
	many vertices is not $k$-scribable.
\end{reptheorem}

This follows from the main results of this section, namely
Propositions~\ref{prop:even} for even dimensions, Corollary~\ref{cor:2d-1} for
odd dimensions and $k>1$, and Proposition~\ref{prop:neighborly} for $k=1$. 

\subsection{$k$-ply systems and $k$-sets}

As we have already mentioned in Section~\ref{sec:stacked}, any point $\bx$ in
$\Euc^d \setminus \theBall$ can be associated with a closed spherical cap on
$\theSphere$, namely the set of points of $\theSphere$ that are visible from
$\bx$. A set of spherical caps on $\theSphere$ is said to be a \defn{$k$-ply
system} if no point of $\theSphere$ is in the interior of $k$~caps.  These
systems were studied by Miller et al.~\cite{miller1997}, who proved the
following Separation Theorem.  Here, the \defn{intersection graph} is the graph
where every vertex represents a cap, and two caps form an edge if they
intersect.

\begin{proposition}[Sphere Separator Theorem]\label{prop:separator}
	The intersection graph of a $k$-ply system consisting of $n$ caps on a
	$d$-dimensional sphere can be separated into two disjoint parts, each of size
	at most $\frac{d+1}{d+2}n$, by removing $O(k^{1/d}n^{1-1/d})$ vertices.
\end{proposition}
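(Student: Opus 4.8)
The plan is to produce a single $(d-1)$-subsphere of $\Sph^d$ that (a) splits the caps into two balanced parts and (b) meets few of them; removing the caps it meets then disconnects the intersection graph, since two caps lying in opposite open sides of a subsphere are necessarily disjoint. I represent each cap $C_i$ by a point $p_i$ in its relative interior (its spherical center) together with its angular radius $\theta_i$, and I measure caps both by $\theta_i$ and by spherical area, the latter being controlled by the $k$-ply hypothesis through the bound $\sum_i \operatorname{area}(C_i) \le k \operatorname{vol}(\Sph^d)$.

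For the balance I would invoke the classical Centerpoint Theorem in $\RR^{d+1}$: there is a point $c$ such that every hyperplane through $c$ has at most $\frac{d+1}{d+2}n$ of the $p_i$ strictly on each side. This is exactly where the constant $\frac{d+1}{d+2}$ enters, as the centerpoint bound in dimension $d+1$. Since $c$ lies in the open unit ball (it is in the convex hull of points of $\Sph^d$), I regard this ball as the Klein model of $\Hyp^{d+1}$ and choose a conformal map $\Phi$ of $\Sph^d$ — the boundary action of the corresponding hyperbolic isometry — sending $c$ to the origin. Conformal maps send caps to caps and preserve the $k$-ply property, and $\Phi(p_i)$ still lies in $\Phi(C_i)$. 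The key point is that great subspheres $\Sph^d \cap H_0$ with $0 \in H_0$ correspond, under $\Phi^{-1}$, to flat sections $\Sph^d \cap H'$ with $c \in H'$, because in the Klein model hyperbolic isometries carry flat hyperplanes through the center to flat hyperplanes through $c$. Consequently every great subsphere splits the transported points $\Phi(p_i)$ with at most $\frac{d+1}{d+2}n$ on each side, so after applying $\Phi$ I may assume that \emph{every} great subsphere already yields a balanced split.

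For the separator size I would bound, over a uniformly random hyperplane through the origin, the expected number of (transported) caps met by the great subsphere. A great subsphere with uniform random normal meets a cap of angular radius $\theta$ with probability $O(\theta)$, so the expected number met is $O(\sum_i \theta_i)$, where now $\theta_i$ denotes the transported angular radii. The $k$-ply area bound gives $\sum_i \theta_i^d = O(k)$ after peeling off the $O(k)$ caps of radius bounded away from $0$ (each of area $\Omega(1)$), and Hölder's inequality then yields $\sum_i \theta_i = O\!\left(n^{1-1/d}(\sum_i \theta_i^d)^{1/d}\right) = O(k^{1/d}n^{1-1/d})$. Hence some hyperplane through the origin produces a great subsphere meeting at most $O(k^{1/d}n^{1-1/d})$ caps; since balance holds for every great subsphere, this one is automatically balanced. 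Removing the caps it meets separates the intersection graph into its two open sides, each of size at most $\frac{d+1}{d+2}n$ (a cap contained in one open side has its representative point there), and transporting back by $\Phi^{-1}$, a bijection preserving cap intersections and hence the whole graph, gives the statement.

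The main obstacle is the interaction between the affine centerpoint, an essentially linear object, and the conformal recentering $\Phi$, which is nonlinear on $\Sph^d$: one must verify carefully that the balanced splits coming from hyperplanes through $c$ are exactly the great-subsphere splits after transport, which is what the Klein-model description of $\Phi$ as a hyperbolic isometry fixing the flat-hyperplane structure supplies. A secondary technical point is the treatment of large caps in the counting step, where the estimate $\operatorname{area}(C_i) \asymp \theta_i^d$ degrades; this is handled by peeling off the few caps of radius bounded below using the $k$-ply area bound before applying Hölder.
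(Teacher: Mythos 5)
This proposition is not proved in the paper at all: it is quoted as a known result of Miller, Teng, Thurston and Vavasis~\cite{miller1997}, so there is no internal proof to compare against. Your argument is essentially the original proof from that source: take a centerpoint of representative points in $\RR^{d+1}$, use a M\"obius transformation (the boundary action of a Klein-model hyperbolic isometry) to move it to the origin so that \emph{every} great subsphere gives a balanced split, then average over great subspheres, bounding the expected number of caps hit by $O(\sum_i \theta_i)$ and controlling $\sum_i \theta_i$ through the $k$-ply area bound $\sum_i \theta_i^d = O(k)$ and H\"older's inequality. All three pillars (the transported centerpoint balance, the $O(\theta)$ hitting probability, and the area estimate) are correct, and the Klein-model correspondence between linear hyperplane sections and hyperplane sections through $c$ is exactly the right way to justify the recentering step.

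One step does need a patch: your parenthetical justification that the centerpoint $c$ lies in the \emph{open} unit ball is not valid as stated, since the convex hull of points of $\Sph^d$ also contains points of $\Sph^d$ itself; and if $c$ lies on the sphere, the required hyperbolic isometry does not exist. This degenerate case can be handled as follows: if $c \in \Sph^d$ is a centerpoint, the closed half-space bounded by the tangent hyperplane at $c$ on the outer side meets $\Sph^d$ only in $c$, so at least $n/(d+2)$ of the points $p_i$ coincide with $c$; these are centers of concentric caps, whose common center lies in the interior of all of them, so the $k$-ply condition forces $n/(d+2) < k$. In that regime $n = n^{1/d} n^{1-1/d} < ((d+2)k)^{1/d} n^{1-1/d}$, so removing all vertices is already a valid separator. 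With this (or a small perturbation argument guaranteeing an interior centerpoint) added, your proof is complete.
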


To the knowledge of the authors, the best known constant factor in the
proposition is
\[
	c_2=\sqrt{\frac{2\pi}{\sqrt 3}}\Bigg(\frac{1+\sqrt k}{\sqrt{2(1+k)}}+o(1)\Bigg)
\]
	for $d=2$ and
\[
	c_d=\frac{2A_{d-1}}{A_d^{1-1/d}V_d^{1/d}}+o(1)
\]
for $d>2$; see~\cite{spielman1996}.  Here $V_d$ is the volume of a unit
$d$-ball and $A_d$ is the area of a unit $d$-sphere, so $A_{d-1}=dV_d$.

For a point set $V\subset \Euc^d$, a subset $I$ of cardinality $k$ is said to
be a \defn{$k$-set} if there is a hyperplane strictly separating $I$ and
$V\setminus I$.  We will define the $k$-sets of a polytope to be the $k$-sets
of its set of vertices, and say that a $k$-set intersects $\theSphere$ if its
convex hull intersects $\theSphere$. The following lemma relates $k$-sets and
$k$-ply systems.

\begin{lemma}\label{lem:kply}
	A point set $V \subset \Euc^d \setminus \theBall$ corresponds to a $k$-ply
	system on $\theSphere$ if and only if every $k$-set intersects the sphere
	$\theSphere$.
\end{lemma}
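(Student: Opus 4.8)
The plan is to translate both sides of the stated equivalence into the single inequality $\Eprod{\bx}{\by} > 1$, and then to exploit the elementary fact that pushing a vector from the interior of $\theBall$ radially out to $\theSphere$ can only increase such inner products. The first step is to identify the cap associated with a point $\bx \in \Euc^d \setminus \theBall$. Since $\bx$ lies outside the closed unit ball, its polar hyperplane $\bx^\perp = \{\by : \Eprod{\bx}{\by} = 1\}$ cuts $\theSphere$ in a $(d-2)$-sphere, which is precisely the horizon of tangency points of the lines through $\bx$; the visible cap is the near side, so
\[
	C_\bx = \set{\by \in \theSphere}{\Eprod{\bx}{\by} \ge 1},
\]
with relative interior $\smallset{\by \in \theSphere}{\Eprod{\bx}{\by} > 1}$. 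Consequently $V$ fails to be a $k$-ply system if and only if there is a point $\by \in \theSphere$ together with $k$ distinct points $\bx_1, \dots, \bx_k \in V$ satisfying $\Eprod{\bx_i}{\by} > 1$ for all $i$.

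Next I would characterize when a $k$-set $I$ misses the sphere. As the vertices of $I$ lie outside $\theBall$ and $\conv(I)$ is connected while the exterior of $\theSphere$ is connected, $\conv(I) \cap \theSphere = \emptyset$ is equivalent to $\conv(I) \cap \theBall = \emptyset$. Since $\conv(I)$ and $\theBall$ are then disjoint compact convex sets, they can be strictly separated, and rescaling the normal vector produces a point $\bv \in \interior \theBall$ with $\Eprod{\bv}{\bx} > 1$ for every $\bx \in I$; conversely any such $\bv$ strictly separates $\conv(I)$ from $\theBall$. Thus a $k$-set $I$ misses $\theSphere$ if and only if there is $\bv \in \interior \theBall$ with $\Eprod{\bv}{\bx} > 1$ for all $\bx \in I$.

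For the direction that a $k$-set missing the sphere forces the failure of the $k$-ply property, I would take the separating point $\bv$ above, which is nonzero because $\Eprod{\bv}{\bx} > 1$, and push it onto the sphere by setting $\by = \bv / \sqrt{\Eprod{\bv}{\bv}}$. Because $\Eprod{\bv}{\bv} < 1$, dividing by $\sqrt{\Eprod{\bv}{\bv}}$ strictly increases each positive inner product, so $\Eprod{\bx}{\by} > \Eprod{\bx}{\bv} > 1$ for all $\bx \in I$. Hence $\by \in \theSphere$ lies in the relative interiors of the $k$ caps $C_\bx$ with $\bx \in I$, and $V$ is not a $k$-ply system.

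The remaining direction is the main obstacle, since from a witness point lying in $k$ cap interiors one must extract an \emph{honest} $k$-set, that is, a separable subset of size exactly $k$. Given $\by \in \theSphere$ with $\Eprod{\bx_i}{\by} > 1$ for $k$ points of $V$, I would perturb $\by$ slightly within $\theSphere$ to a direction $\by'$ for which the functional $\Eprod{\cdot}{\by'}$ takes distinct values on $V$ while at least $k$ points still satisfy $\Eprod{\cdot}{\by'} > 1$ (a small perturbation preserves a strict open inequality). The $k$ points of $V$ maximizing $\Eprod{\cdot}{\by'}$ then form a $k$-set, separated from the remaining vertices by a level hyperplane of $\by'$; and since at least $k$ points exceed the value $1$, the $k$-th largest value exceeds $1$, so all $k$ chosen points satisfy $\Eprod{\cdot}{\by'} > 1$ with $\by' \in \theSphere$. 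This places their convex hull strictly beyond the polar hyperplane of $\by'$, hence disjoint from $\theBall$, exhibiting a $k$-set that misses $\theSphere$ and completing the equivalence.
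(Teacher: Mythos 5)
Your proof is correct and follows essentially the same route as the paper's: both directions rest on the correspondence between a point of $\theSphere$ lying in the interiors of $k$ caps and a hyperplane (tangent to, or strictly separating from, $\theBall$) with $k$ points of $V$ strictly beyond it. The only difference is one of detail: the paper dispatches the converse with ``the other direction is obtained by reversing the argument,'' whereas you carefully extract a $k$-set of cardinality exactly $k$ via a tie-breaking perturbation of the witness point --- a worthwhile filling-in of an implicit step, but not a different method.
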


\begin{proof}
	Assume that there is a $k$-set $I$ such that $\conv I \cap \theSphere =
	\emptyset$.  Then there is a hyperplane tangent to $\theSphere$ separating
	$I$ and the interior of $\theBall$.  The tangency point is visible from every
	point in $I$.  In other words, it is in the interior of at least $k$ of the
	associated caps, so the set of caps corresponding to $V$ is not a $k$-ply
	system.  The other direction is obtained by reversing the argument.
\end{proof}

The following obvious fact can be regarded as a special case of this lemma.

\begin{corollary}\label{cor:edges}
	The caps of~$\theSphere$ corresponding to $v,w\in \Euc^d\setminus\theBall$
	have disjoint interiors if and only if the segment $[v,w]$ strongly
	cuts~$\theSphere$.
\end{corollary}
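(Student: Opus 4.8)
The plan is to obtain this as the instance $k=2$ of Lemma~\ref{lem:kply}, after checking that the two formulations of the conclusion match up; I will also indicate the short self-contained argument, since for a single pair of caps it is cleaner than invoking the general lemma.

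First I would make the correspondence between external points and caps explicit. For $\bx \in \Euc^d \setminus \theBall$, the cap of points of $\theSphere$ visible from $\bx$ is $\{p \in \theSphere \mid \Eprod{\bx}{p} \ge 1\}$, its horizon being the intersection of $\theSphere$ with the polar hyperplane $\{y \mid \Eprod{\bx}{y} = 1\}$, and its relative interior is cut out by the strict inequality $\Eprod{\bx}{p} > 1$. Thus the interiors of the caps of $v$ and $w$ meet precisely when there is a unit vector $p$ with $\Eprod{v}{p} > 1$ and $\Eprod{w}{p} > 1$ simultaneously. The key observation is that such a $p$ exists if and only if some hyperplane tangent to $\theSphere$ strictly separates the segment $[v,w]$ from $\theBall$: the tangent hyperplane $\{y \mid \Eprod{p}{y} = 1\}$ keeps $\theBall$ in the closed halfspace $\Eprod{p}{y} \le 1$, while $\Eprod{v}{p} > 1$ and $\Eprod{w}{p} > 1$ put both endpoints, and hence the whole segment, strictly on the far side.

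I would then settle the two remaining implications by elementary separation. If $[v,w]$ is disjoint from the closed ball $\theBall$, these two disjoint compact convex sets are strictly separated by some hyperplane, which I translate towards $\theSphere$ until it first becomes tangent; this is possible because the segment never touches the ball, so at the moment of tangency the ball lies in the closed near side and $[v,w]$ remains strictly on the far side, producing the required tangency point $p$. Conversely, the existence of such a $p$ exhibits $[v,w]$ strictly on the outer side of a tangent hyperplane, and hence disjoint from $\theBall$.

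The one point that needs care, and is the only real obstacle, is reconciling the wording. Strong cutting asks $\relint[v,w] \cap \theBall \ne \emptyset$, whereas the intrinsic condition above is $[v,w] \cap \theBall \ne \emptyset$. Since both $v$ and $w$ lie outside the closed ball, the endpoints contribute nothing to $[v,w] \cap \theBall$, so the two are equivalent, and both are the negation of the separation condition (in the tangential borderline case the segment touches $\theSphere$ in its relative interior, which is strong cutting, and the common horizon point lies only on the \emph{boundaries} of the caps). Combining the implications, the interiors of the two caps are disjoint exactly when $[v,w]$ strongly cuts $\theSphere$, as claimed; this is precisely Lemma~\ref{lem:kply} for $k=2$, the pair $\{v,w\}$ being trivially a $2$-set and disjointness of interiors being the $2$-ply condition.
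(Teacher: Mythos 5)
Your proposal is correct and takes essentially the same route as the paper: the paper presents this corollary precisely as the $k=2$ special case of Lemma~\ref{lem:kply}, and your self-contained separation argument (tangent hyperplane $\leftrightarrow$ tangency point interior to both caps) is exactly the argument in the paper's proof of that lemma. The reconciliation of ``$[v,w]\cap\theBall\neq\emptyset$'' with ``strongly cuts $\theSphere$'' via the endpoints lying outside $\theBall$ is the only detail the paper leaves implicit, and you handle it correctly.
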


\subsection{Even dimensional cyclic polytopes}

The following is the key for proving our main result. It uses that all even-dimensional cyclic polytopes 
have the same oriented matroid to make statements about the $k$-sets of any realization of $\cyc{d}{n}$ 
(which fail for odd dimensions, cf. Section~\ref{sec:oddcyclic}).

\begin{lemma}[$k$-set Lemma]\label{lem:kset}
	For even $d$ and $k\ge 3d/2-1$, every $k$-set of $\cyc{d}{n}$ contains a
	facet of $\cyc{d}{n}$. 
\end{lemma}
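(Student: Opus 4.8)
The plan is to work with the standard realization on the moment curve $\gamma(t)=(t,t^2,\dots,t^d)$; this is legitimate because for even $d$ all realizations of $\cyc{d}{n}$ share the same oriented matroid, so the notion of $k$-set depends only on the combinatorial type. A hyperplane $\{\langle a,x\rangle=b\}$ meets $\gamma$ exactly where the polynomial $p(t)=a_1t+\dots+a_dt^d-b$ of degree at most $d$ vanishes, so a subset $I\subseteq[n]$ is a $k$-set precisely when the sign sequence of $(p(t_1),\dots,p(t_n))$ is positive on $I$ and negative on its complement for some such $p$. First I would record the elementary dictionary: such a $p$ exists if and only if, writing $[n]$ as maximal alternating runs (\emph{blocks}) of $I$ and of its complement, the number of sign changes --- one less than the total number of blocks --- is at most $d$. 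One direction places a root in each gap where the sign changes; the other uses that a polynomial of degree $\le d$ changes sign at most $d$ times. In particular the total number of blocks is at most $d+1$.

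Next I would rephrase Gale's evenness condition (Proposition~\ref{prop:evenness}) in the same language: a set $F$ of size $d$ is a facet if and only if every maximal run of $F$ that is \emph{interior} (i.e.\ avoids the extreme vertices $1$ and $n$) has even length, while the runs containing $1$ or $n$ may have arbitrary length. This turns the statement into a packing problem: I want to select $F\subseteq I$ with $|F|=d$ obeying this parity rule. The best one can do is to keep a prefix (resp.\ suffix) of each boundary block of $I$ and an even-length sub-run of each interior block of $I$; the resulting $F$ is always a facet, and the largest size it can reach is $k-\sigma$, where $\sigma$ is the number of \emph{odd-length interior blocks} of $I$ (interior blocks of length $\ell_l$ contribute $2\lfloor \ell_l/2\rfloor=\ell_l-[\ell_l\text{ odd}]$, and boundary blocks contribute fully). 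Since $d$ is even, the value $d$ is always attainable once $d\le k-\sigma$: if a boundary block is present every integer up to the maximum is attainable, and if none is present only even totals occur, but $d$ is even. So the whole lemma reduces to the inequality $\sigma\le k-d$.

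The heart of the argument, and the step I expect to be most delicate, is bounding $\sigma$; this is exactly where even $d$ and the precise threshold $3d/2-1$ enter. From the block bound I would deduce that the number of interior blocks of $I$ is at most $d/2$, with equality possible only when $I$ contains neither extreme vertex (so that both extreme blocks belong to the complement). In the other two cases --- one or both extreme vertices lying in $I$ --- the parity of the number of sign changes forces at most $d/2-1$ interior blocks, whence $\sigma\le d/2-1\le k-d$ follows at once from $k\ge 3d/2-1$. In the extremal case of $d/2$ interior blocks, all of $I$ lies in interior blocks, so $\sigma\equiv\sum_l\ell_l=k\pmod 2$. If $\sigma$ were to exceed $k-d$, then together with $\sigma\le d/2$ this would force $k=3d/2-1$ and $\sigma=d/2$, giving $d/2\equiv 3d/2-1\pmod 2$, i.e.\ $d$ odd, contradicting that $d$ is even. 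Hence $\sigma\le k-d$ in every case, the target size $d$ is attainable, and $I$ contains a facet.
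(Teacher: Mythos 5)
Your proof is correct and follows essentially the same route as the paper's: pass to an order-$d$ curve realization via Sturmfels' rigidity of the oriented matroid of even-dimensional cyclic polytopes, bound the number of blocks of a $k$-set by the degree-$d$ sign-change bound, and combine a parity argument (using that $d$ is even and $k \ge 3d/2-1$) with Gale's evenness condition to extract a facet. The only differences are cosmetic: the paper assumes $k = 3d/2-1$ and removes one vertex from each odd internal segment, whereas you treat all $k \ge 3d/2-1$ uniformly through the inequality $\sigma \le k-d$ and spell out the trimming-to-size-$d$ step in more detail.
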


\begin{proof}
	Without loss of generality, we can assume that $k=3d/2-1$.  Let $I$ be a
	$k$-set of $\cyc{d}{n}$.

	Every even-dimensional cyclic polytope has its vertices on an order-$d$ curve
	$\gamma$~\cite{Sturmfels1987}. Every hyperplane $H$ intersects $\gamma$ in at
	most $d$ points, hence there can be at most $d$ changes of sides of $H$
	between $I$ and $[n]\setminus I$.  So $I$ is decomposed into at most $d/2+1$
	consecutive segments of~$[n]$.
	
	We call a consecutive segment of $[n]$ \emph{external} if it contains $1$ or
	$n$, or \emph{internal} otherwise. If there are $c$ changes of sides, $c \le
	d$, then $I$ has either $\lfloor c/2 \rfloor$ or $\lceil c/2 \rceil - 1$
	internal segments.  In the worst case with $c=d$, there are $d/2$ internal
	segments; they can not be all of odd length because $d$ is even and $k=3d/2-1
	\not\equiv d/2 \pmod 2$.  We then conclude that at most $d/2-1$ internal
	segments have odd lengths.
	
	By removing a vertex from the boundary of each of the odd internal segments,
	we obtain a set $J$ satisfying Gale's evenness condition except for the size.
	In the worst case, $d/2-1$ internal segments are odd, hence $J$ contains at
	least $k+1-d/2 \ge d$ vertices.  This allows us to take a $d$-element subset
	of $J$ by taking even subsegments from the internal segments, together with
	external subsegments from the external segments.  This set corresponds to a
	facet since it still fulfills Gale's evenness condition.
\end{proof}

\begin{figure}[htpb]
	\includegraphics[width=.7\linewidth]{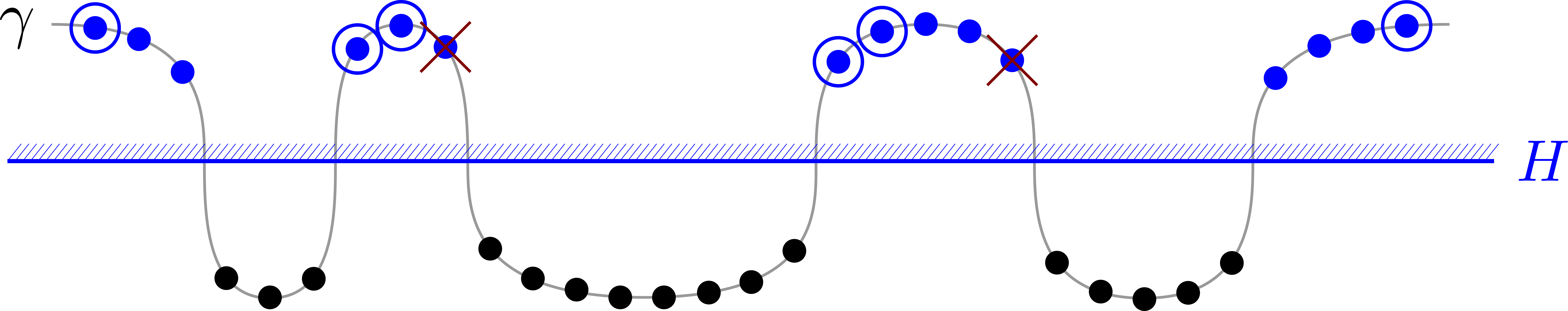}
	\caption{
		Sketch of a $15$-set of a cyclic $6$-polytope. The curve~$\gamma$ intersects
		the hyperplane~$H$ in $6$ points, separating them  into the $k$-set (above)
		and its complement (below).
	}\label{fig:kset}
\end{figure}

The proof of the $k$-set Lemma is illustrated in Figure~\ref{fig:kset}. It shows a
$15$-set of a cyclic $6$-polytope, which consists of $4$ segments, of lengths $3$,
$3$, $5$ and $4$, respectively. The second and third segments are internal. We
remove one extreme point from every internal odd segment (marked with a cross).
Then we can select a subset that forms a facet (circled elements).

We are finally ready to prove the main result of this part. 
\begin{proposition}\label{prop:even}
	Let $d\ge 4$ be an even integer and 
	\[
		n>(c_{d-1}(d+1))^{d-1}(3d/2-1),
	\] 
	then the cyclic polytope $\cyc{d}{n}$ is not $(1,d-1)$-scribable.
\end{proposition}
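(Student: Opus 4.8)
The plan is to argue by contradiction, playing the Sphere Separator Theorem (Proposition~\ref{prop:separator}) against the $k$-set Lemma (Lemma~\ref{lem:kset}). Suppose $\cyc{d}{n}$ were $(1,d-1)$-scribable. By Lemma~\ref{lem:newequivalent} I may fix a bounded Euclidean realization in which every edge avoids $\theSphere$ and every facet cuts it. Since $\cP$ is $1$-avoiding, Lemma~\ref{lem:property}\ref{it:otherij} makes it $0$-avoiding, so all $n$ vertices lie outside $\theBall$; hence each vertex $v$ determines a spherical cap on $\theSphere$, namely the part of $\theSphere$ visible from $v$. The strategy is to extract from this configuration two incompatible facts about the \emph{intersection graph} of these caps.

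First I would show that the caps form a $(3d/2-1)$-ply system. Put $k=3d/2-1$. Every facet $F$ strongly cuts, so $F$ meets $\theBall$; as the vertices of $F$ lie outside $\theBall$, the convex hull $F$ must already meet $\theSphere$. By the $k$-set Lemma every $k$-set of $\cyc{d}{n}$ contains the vertex set of a facet, so the convex hull of every $k$-set meets $\theSphere$. Lemma~\ref{lem:kply} then says precisely that the caps form a $k$-ply system on the $(d-1)$-sphere $\theSphere$.

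Next I would show that the intersection graph of the caps is the complete graph $K_n$. Because $d\ge 4$, the cyclic polytope is $2$-neighborly, so every pair of vertices spans an edge, and by hypothesis each such edge avoids $\theSphere$. For a pair $\{v,w\}$ the edge either strictly avoids---then it does not strongly cut, since a face that both cuts and avoids is tangent by Lemma~\ref{lem:reduce}, and the two caps overlap in their interiors by Corollary~\ref{cor:edges}---or it is tangent, in which case the two caps share the tangency point; either way the caps meet. Hence every pair of caps intersects and the intersection graph is $K_n$. Now I apply Proposition~\ref{prop:separator} to this $k$-ply system on the $(d-1)$-dimensional sphere $\theSphere$: the constant is $c_{d-1}$, the admissible part sizes are at most $\tfrac{d}{d+1}n$, and the separator has size at most $c_{d-1}k^{1/(d-1)}n^{1-1/(d-1)}(1+o(1))$. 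Thus $K_n$ can be split into two parts each of size at most $\tfrac{d}{d+1}n$ by deleting a set $S$ of at most that many vertices. But removing any $S$ from $K_n$ leaves the connected graph $K_{n-|S|}$, so one of the two parts must be empty, forcing $|S|\ge n-\tfrac{d}{d+1}n=\tfrac{n}{d+1}$. Comparing the two bounds gives $c_{d-1}k^{1/(d-1)}n^{1-1/(d-1)}\ge \tfrac{n}{d+1}$, which rearranges to $n\le (c_{d-1}(d+1))^{d-1}(3d/2-1)$, contradicting the hypothesis on $n$ and proving the proposition.

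I expect the main obstacle to be the third step, pinning down the intersection graph. One must check carefully that \emph{both} strictly avoiding and merely tangent edges force the corresponding caps to meet, and that degenerate caps arising from vertices that happen to land on $\theSphere$ do not spoil either the ply estimate or the completeness of the graph. This is exactly where the distinction between the \emph{interior} notion used in the ply condition and the \emph{meeting} notion used in the intersection graph matters, and it relies essentially on even-dimensionality through the $k$-set Lemma, whose oriented-matroid input is unavailable in odd dimensions.
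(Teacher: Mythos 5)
Your proof is correct and takes essentially the same route as the paper's: both combine the $k$-set Lemma~\ref{lem:kset} with Lemma~\ref{lem:kply} to obtain a $(3d/2-1)$-ply system, use Corollary~\ref{cor:edges} (via $2$-neighborliness) to see that the intersection graph of the caps is complete, and then contradict the Sphere Separator Theorem~\ref{prop:separator} by the same quantitative comparison. Your handling of tangent edges and degenerate caps is in fact slightly more careful than the paper's, whose displayed separator bound writes $\lfloor d/2\rfloor^{1/(d-1)}$ where $k^{1/(d-1)}$ with $k=3d/2-1$ is what matches the hypothesis on $n$.
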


By Lemma~\ref{lem:property}(i), this implies that, in even dimensions,
$\cyc{d}{n}$ is not $(i,j)$-scribable for $1 \le i \le j \le d-1$ if $n$ is
large enough.  In particular, we have proved Theorem~\ref{thm:cyclic} for even
dimensions.

\begin{proof}
	Assume that $\cP$ is a $(1,d-1)$-scribed cyclic
	polytope with $n$ vertices, for a value of $n$ larger than $(c_{d-1}(d+1))^{d-1}(3d/2-1)$.  Let $k=3d/2-1$. By the $k$-set
	Lemma~\ref{lem:kset}, every $k$-set of the vertices of $\cP$ contains a
	facet. Since every facet of $\cP$ cuts the sphere $\theSphere$, this implies
	that every $k$-set intersects $\theSphere$.  Hence, the collection spherical
	caps corresponding to the vertices of $\cP$ form a $k$-ply system.  By the
	Sphere Separator Theorem~\ref{prop:separator}, the intersection graph of the
	caps admits a separator of size
	\[
		c_{d-1}\lfloor d/2\rfloor^\frac{1}{d-1}n^\frac{d-2}{d-1}<\frac{n}{d+1}.
	\]
	However, since all the edges of $\cP$ strongly avoid the sphere, the
	intersection graph is a complete graph by Lemma~\ref{cor:edges}, and the
	removal of the separator leaves a complete graph of more than
	$\frac{d}{d+1}n$ vertices, contradicting Proposition~\ref{prop:separator}.
\end{proof}

By Lemma~\ref{lem:property}(iii) and (iv), we obtain the following corollary,
which provides the final counterexamples to $(i,j)$-scribability for the proof
of Theorem~\ref{thm:strongij}.
\begin{corollary}\label{cor:1d-2}
	For odd $d \ge 5$, the pyramid over a cyclic $(d-1)$-polytope with sufficiently
	many vertices is a $d$-polytope that is neither $(1,d-2)$-scribable nor
	$(2,d-1)$-scribable.
\end{corollary}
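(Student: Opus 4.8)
The plan is to reduce both non-scribability statements to Proposition~\ref{prop:even}, applied to the even-dimensional base of the pyramid. Since $d$ is odd and $d\ge 5$, the dimension $d-1$ is even and at least $4$, so Proposition~\ref{prop:even} guarantees that for $n$ large enough the cyclic $(d-1)$-polytope $Q:=\cyc{d-1}{n}$ is not $(1,d-2)$-scribable. I would fix such an $n$ and let $\cP$ be the pyramid over $Q$, which is a $d$-polytope. The entire argument rests on two elementary structural facts: that $Q$ appears as a facet of $\cP$, and that $Q$ is (combinatorially) the vertex figure of $\cP$ at its apex. Each of these feeds into one of the inheritance properties of Lemma~\ref{lem:property}, and in both cases the conclusion is that $Q$ itself must be $(1,d-2)$-scribable, which is exactly what Proposition~\ref{prop:even} forbids.

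For the first claim, that $\cP$ is not $(1,d-2)$-scribable, I would argue by contradiction. If $\cP$ were $(1,d-2)$-scribable, then since $j=d-2\le d-2$ I may invoke Lemma~\ref{lem:property}(iii), which asserts that every facet of $\cP$ is itself a $(1,d-2)$-scribable $(d-1)$-polytope; in particular the base $Q$ would be $(1,d-2)$-scribable, contradicting the choice of $n$. For the second claim, that $\cP$ is not $(2,d-1)$-scribable, I would again argue by contradiction: if $\cP$ were $(2,d-1)$-scribable, then since $i=2\ge 1$ I may invoke Lemma~\ref{lem:property}(iv), which asserts that every vertex figure of $\cP$ is $(i-1,j-1)=(1,d-2)$-scribable. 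Because the vertex figure of a pyramid at its apex is exactly its base, this vertex figure is $Q$, so once more $Q$ would be $(1,d-2)$-scribable, a contradiction.

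The argument is essentially bookkeeping, so I do not expect a serious obstacle. The only points that require care are verifying that the index shifts in Lemma~\ref{lem:property}(iii) and (iv) both land precisely on the pair $(1,d-2)$, and confirming that Proposition~\ref{prop:even} genuinely applies in dimension $d-1$, which is where the parity hypothesis enters ($d$ odd forces $d-1$ even, so that $\cyc{d-1}{n}$ is the even-dimensional cyclic polytope governed by the $k$-set Lemma). Everything else follows automatically from the single observation that the base of the pyramid simultaneously realizes both a facet and the apex vertex figure of $\cP$.
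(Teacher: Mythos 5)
Your proof is correct and follows exactly the paper's intended argument: the paper derives Corollary~\ref{cor:1d-2} precisely by applying Lemma~\ref{lem:property}(iii) (facets) and (iv) (vertex figures) to reduce both claims to Proposition~\ref{prop:even} for the even-dimensional base $\cyc{d-1}{n}$. The index bookkeeping and the parity check you carry out are the same as in the paper, so there is nothing to add.
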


\subsection{Odd dimensional cyclic polytopes}\label{sec:oddcyclic}

The proof of the $k$-set Lemma~\ref{lem:kset} fails dramatically in odd
dimensions.  When $d$ is odd, different realizations of $\cyc{d}{n}$ may have
different oriented matroids and hence different $k$-set structures.  In
particular, the vertices do not necessarily lie on any order-$d$ curve.  In
fact, the $k$-set Lemma~\ref{lem:kset} does not hold in odd dimensions, as we
will see in Remark~\ref{rmk:oddkset}.

Nevertheless, since $\cyc{d}{n}$ has $\cyc{d-1}{n-1}$ as a vertex-figure, we
obtain the following corollary by Lemma~\ref{lem:property}(iv).

\begin{corollary}\label{cor:2d-1}
	Let $d\ge 5$ be an odd integer and 
	\[
		n>(c_{d-1}(d+1))^{d-1}(\lfloor 3d/2 \rfloor - 1),
	\] 
	then the cyclic polytope $\cyc{d}{n}$ is not $(2,d-1)$-scribable.
\end{corollary}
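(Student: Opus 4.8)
The plan is to derive Corollary~\ref{cor:2d-1} directly from the even-dimensional result Proposition~\ref{prop:even} together with the vertex-figure hereditary property recorded in Lemma~\ref{lem:property}\ref{it:vf}. The whole point is that for odd $d$ we cannot run the $k$-set argument of Lemma~\ref{lem:kset} (the vertices of an odd cyclic polytope need not lie on an order-$d$ curve, so the clean segment decomposition breaks down), and therefore we want to sidestep odd dimension entirely by passing to a \emph{vertex-figure}, which lowers the dimension by one and lands us back in the even-dimensional setting where Proposition~\ref{prop:even} applies.

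First I would recall the standard combinatorial fact that the cyclic polytope $\cyc{d}{n}$ has $\cyc{d-1}{n-1}$ as a vertex-figure: taking the vertex-figure at any vertex of a cyclic $d$-polytope produces a cyclic $(d-1)$-polytope on $n-1$ vertices (this is a classical property of neighborly/cyclic polytopes). Since $d$ is odd and $d\ge 5$, the dimension $d-1$ is even and at least $4$, so Proposition~\ref{prop:even} is available for $\cyc{d-1}{n-1}$.

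Next I would argue by contraposition using Lemma~\ref{lem:property}\ref{it:vf}. Suppose, for contradiction, that $\cyc{d}{n}$ were $(2,d-1)$-scribable. Applying Lemma~\ref{lem:property}\ref{it:vf} (with $i=2$ and $j=d-1$, both at least $1$), every vertex-figure of $\cyc{d}{n}$ would then be $(2-1,\,(d-1)-1)$-scribable, i.e.\ $(1,d-2)$-scribable, as a $(d-1)$-polytope. But a vertex-figure is combinatorially $\cyc{d-1}{n-1}$, and $d-2=(d-1)-1$, so this says $\cyc{d-1}{n-1}$ is $(1,(d-1)-1)$-scribable, precisely the property ruled out by Proposition~\ref{prop:even} in even dimension $d-1$.

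The only quantitative care needed is to check that the hypothesis on $n$ guarantees the vertex-figure has enough vertices to trigger Proposition~\ref{prop:even}. The even case requires more than $(c_{d-2}(d))^{\,d-2}(\lfloor 3(d-1)/2\rfloor - 1)$ vertices for a cyclic $(d-1)$-polytope; the stated bound $n>(c_{d-1}(d+1))^{d-1}(\lfloor 3d/2\rfloor-1)$ is chosen so that $n-1$ comfortably exceeds the threshold needed in dimension $d-1$ (indeed $\lfloor 3(d-1)/2\rfloor-1 \le \lfloor 3d/2\rfloor-1$, and the separator-constant factor is at least as favorable). I do not expect any genuine obstacle here: the argument is a one-step reduction, and the main content has already been done in Proposition~\ref{prop:even}. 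The only thing to get right is the bookkeeping of the index shift in Lemma~\ref{lem:property}\ref{it:vf} and verifying the vertex-count inequality survives the transition $n\mapsto n-1$, both of which are routine.
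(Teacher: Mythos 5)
Your proof is correct and is exactly the paper's argument: the paper derives Corollary~\ref{cor:2d-1} in one line from the fact that $\cyc{d}{n}$ has $\cyc{d-1}{n-1}$ as a vertex-figure, combined with Lemma~\ref{lem:property}(iv) and Proposition~\ref{prop:even} applied in the even dimension $d-1$. Your additional bookkeeping on the vertex-count threshold (the shift $n\mapsto n-1$ and the comparison of constants) is the same implicit check the paper leaves to the reader, so nothing is missing.
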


By Lemma~\ref{lem:property}(i), this implies that, in odd dimensions,
$\cyc{d}{n}$ is not $(i,j)$-scribable with $2 \le i \le j \le d-1$ if $n$ is
large enough. This proves Theorem~\ref{thm:cyclic} for $1<k\le d-1$.  The
$1$-scribability of odd-dimensional cyclic polytopes will be taken care of in
Section~\ref{ssec:neighborly}.

However, odd-dimensional cyclic polytopes are $(1,d-1)$-scribable, in contrast
to the situation in even dimensions.

\begin{proposition}\label{prop:odd}
	For odd $d$, the cyclic polytope $\cyc{d}{n}$ is $(1,d-1)$-scribable.
\end{proposition}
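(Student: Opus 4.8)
The plan is to construct, for odd $d$, a single realization of $\cyc{d}{n}$ together with a ball so that every edge strongly avoids $\theSphere$ while every facet strongly cuts it; by Lemma~\ref{lem:property}\ref{it:otherij} this already yields $(1,d-1)$-scribability. Using Lemma~\ref{lem:newequivalent} I will work in Euclidean space with a ball $\theBall$ centered at the origin, of some radius $\rho$. There the two requirements become metric: an edge $e$ strongly avoids $\theSphere$ exactly when some supporting hyperplane of $\cP$ exposing $e$ lies at distance at least $\rho$ from the origin (with $\theBall$ on the polytope's side), while a facet $F$ strongly cuts $\theSphere$ once $\aff F$ lies within distance $\rho$ and the foot of the perpendicular from the origin falls in $\relint F$. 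Hence it suffices to produce a realization with
\[
	\max_{F \text{ facet}} d(0,\aff F) \;<\; \min_{e \text{ edge}}\ \max_{H \supseteq e}\, d(0,H),
\]
where $H$ ranges over the supporting hyperplanes of $\cP$ containing $e$, and then to pick $\rho$ strictly between the two sides.

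The next step is to exploit the combinatorics of odd cyclic polytopes through Gale's evenness condition (Proposition~\ref{prop:evenness}): when $d$ is odd, every facet of $\cyc{d}{n}$ contains vertex $1$ or vertex $n$. Indeed, a facet $I$ avoiding both endpoints decomposes into internal blocks, each of even length by evenness, and would therefore have even cardinality, contradicting $|I|=d$ odd. This endpoint-containment is the lever for bounding facet distances: by placing the two extreme vertices so that every simplex through one of them dips toward the center, I can keep all facet hyperplanes uniformly close to the origin, which is what makes the left-hand side of the displayed inequality small.

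For the construction itself I will start from an inscribed realization (cyclic polytopes are inscribable), in which every edge and every facet cuts $\theSphere$, and then perturb the configuration — adjusting the vertices radially, guided by the endpoint structure — so that each chord acquires a supporting hyperplane at distance $\ge\rho$ while every facet interior still meets the smaller ball. Equivalently, in the language of Corollary~\ref{cor:edges} and Lemma~\ref{lem:kply}, I want the $n$ spherical caps to overlap pairwise, so that every edge of the (complete) $1$-skeleton of the $\lfloor d/2\rfloor$-neighborly polytope avoids, without forming a $k$-ply system, so that the facets may still cut. This is exactly where odd parity is essential: by Section~\ref{sec:oddcyclic} the $k$-set Lemma~\ref{lem:kset} fails for odd $d$, so facets cutting no longer forces every $k$-set to meet $\theSphere$; the $k$-ply obstruction behind Proposition~\ref{prop:even} evaporates, opening the gap that the even case forbids.

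The hard part will be to verify the displayed strict inequality \emph{uniformly}: to check simultaneously, for all $\binom{n}{2}$ edges, that some supporting hyperplane stays outside the chosen ball, and, for all facets, that the nearest point genuinely lands in the relative interior, so that the ball threads between every edge while poking through every facet. Controlling the long chords — in particular the edge $[v_1,v_n]$ between the two endpoints, which may not enter the ball — against the competing demand that every endpoint-containing facet reach the center is the delicate balance; managing it via the Gale endpoint structure, and thereby realizing the positive gap that Proposition~\ref{prop:even} rules out in even dimensions, is the crux of the argument.
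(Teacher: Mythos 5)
Your reduction and your combinatorial observation are both sound: for odd $d$, Gale's evenness condition indeed forces every facet of $\cyc{d}{n}$ to contain vertex $1$ or vertex $n$, and this endpoint fact is exactly the lever behind the paper's proof as well. But the proposal stops where the proof has to start. The entire geometric content --- producing a realization in which all $\binom{n}{2}$ chords (for odd $d\ge 5$ the polytope is $2$-neighborly, so \emph{every} pair of vertices is an edge) avoid the ball while every facet still cuts it --- is deferred to ``start from an inscribed realization and perturb the vertices radially, guided by the endpoint structure,'' and you yourself flag this step as ``the crux.'' That is a genuine gap, not a detail: nothing in the proposal explains how your displayed inequality could be achieved, and the perturbation strategy as stated cannot work. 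In an inscribed realization every edge is a chord cutting $\theBall$; a near-diametral chord cannot be made to avoid the ball by any small motion of its endpoints (as long as the two endpoints lie in nearly opposite directions, the segment passes near the center no matter how far out you push them). Conversely, clustering all vertices in a small cap, which \emph{does} let all edges avoid after a slight outward push, makes the facets on the far side of the cluster miss the ball entirely. The two demands fight each other, and the endpoint observation alone does not resolve the fight.

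The paper resolves it with an explicit construction that your plan does not contain. Realize $\cyc{d-1}{n-1}$ inscribed in the shadow ellipsoid that a light source at $\bx_+=(h,0,\dots,0,-1)$ casts on the hyperplane $x_0=0$, with one vertex $v$ placed at the point where the shadow touches $\theSphere$, and with the other $n-2$ vertices clustered together; then delete $v$ and add the two apexes $v_\pm=(\pm h,0,\dots,0,-1)$. This ``vertex splitting'' (cf.\ Remark~\ref{rmk:CordovilDuchet}) produces a realization of $\cyc{d}{n}$ in which every edge through $v_+$ or $v_-$ lies on a tangent cone to $\theSphere$ and is therefore tangent (hence avoiding), the remaining edges avoid $\theSphere$ because their endpoints are clustered away from the equator, and every facet cuts because --- by the endpoint fact you proved --- it contains $v_+$ or $v_-$ and hence contains a tangent edge. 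The tangent-cone apexes are what reconcile ``all edges avoid'' with ``all facets cut''; without that idea (or an equivalent one), your inequality cannot be verified and the argument remains a plan rather than a proof.
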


\begin{proof}
	Assume that $\theSphere$ is the sphere of unit radius centered at the origin
	of $\RR^d$.  Let $\bx=(0,0,\dots,0,-1)$.  We
	place a light source at $\bx_+=(h,0,\dots,0,-1)$, where $h$ is a
	variable height that will be decided later.  When $h>1$, the boundary of 
	the shadow of $\theSphere$ on
	the hyperplane $x_0=0$ is an ellipsoid $\cE \in \RR^{d-1}$ that contains
	$\bx$.
	
	At the beginning, we take $h=\infty$, so the shadow is the equator of
	$\theSphere$ with $x_0=0$.  Take a realization of $\cyc{d-1}{n-1}$ in the
	hyperplane $x_0=0$ that is inscribed with respect to the equator, 
	with one vertex $v$ placed at $\bx$. Now we decrease the height $h$ of the light
	source and, at the same time, move the vertices of $\cyc{d-1}{n-1}$ with the shadow, 
	so that they remain on the boundary of~$\cE$.  The convex
	hull of these vertices is still a realization of $\cyc{d-1}{n-1}$ (we just
	did a projective transformation) that is
	inscribed with respect to the shadow $\cE$.
	
	We keep decreasing the height $h$ until every edge of $\cyc{d-1}{n-1}$ that
	is disjoint from $v$ avoids the equator of $\theSphere$. This can always be done
	if the original realization of $\cyc{d-1}{n-1}$ was chosen wisely. For example,
	by taking all the vertices but $v$ very close together in the trigonometric moment curve.
	Then the convex
	hull of $\bx_+$, $\bx_-=(-h,x_1,\dots,x_{d-1})$ and the vertices of
	$\cyc{d-1}{n-1}$ other than $v$, gives a realization of $\cyc{d}{n}$ (cf.\
	\cite{cordovil2000} and Remark~\ref{rmk:CordovilDuchet}).  We denote the
	vertex at $\bx_+$ (resp.\ $\bx_-$) by $v_+$ (resp.\ $v_-$).
	
	In this realization, edges incident to $v_\pm$
	are tangent to $\theSphere$ by construction; 
	the other edges, belonging to $\cyc{d-1}{n-1}$,
	avoid $\theSphere$ also by construction.  On the other hand, all the facets are
	adjacent to either $v_+$ or $v_-$ and contain a tangent edge, and hence cut~$\theSphere$. The construction is sketched in Figure~\ref{fig:oddcyclic}.
\end{proof}

\begin{figure}[htpb]
	\includegraphics[width=.7\linewidth]{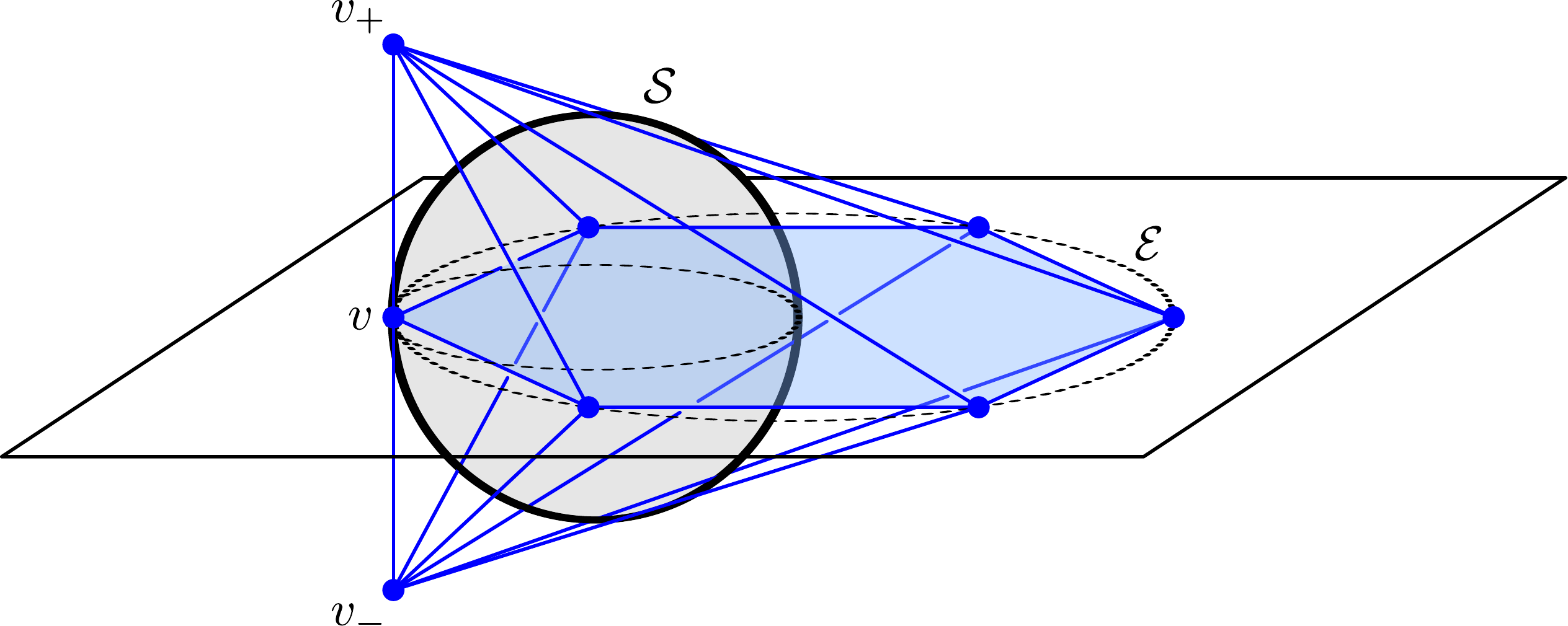}
	\caption{Sketch of the construction of Proposition~\ref{prop:odd}.}\label{fig:oddcyclic}
\end{figure}

\begin{remark}\label{rmk:CordovilDuchet}
	Cordovil and Duchet~\cite{cordovil2000} proposed a process that can realize
	any oriented matroid for $\cyc{d}{n}$ when $d$ is odd, but their description
	does not quite work.  They first stack a vertex onto $\cyc{d-1}{n-1}$, then
	split it into an extra dimension, followed by a perturbation.  This process
	does not in general give a cyclic polytope.  The correct construction
	consists of splitting a vertex of $\cyc{d-1}{n-1}$ into an extra dimension,
	as we did in the proof above, and then a perturbation.
\end{remark}

\begin{remark}\label{rmk:oddkset}
There can be arbitrarily large $k$-sets of an odd dimensional cyclic polytope that do not contain a facet. To see this, take
the realization of $\cyc{d}{n}$ from Proposition~\ref{prop:odd}. From the copy of $\cyc{d-1}{n-1}$ lying in $x_0=0$ take a subset of vertices not containing any facet, and lift them to height $x_0=\varepsilon>0$; and descend the remaining vertices of 
$\cyc{d-1}{n-1}$ to $x_0=-\varepsilon$. If $\varepsilon$ is sufficiently small, then this does not change the combinatorial type,
and the points in the open half-space $x_0>0$ form a $k$-set not containing any facet. 
\end{remark}

\subsection{Neighborly polytopes}\label{ssec:neighborly}

In this part, we apply the ideas leading to Proposition~\ref{prop:even} to
general neighborly polytopes. However, the lack of an analogue to
Lemma~\ref{lem:kset} does not allow us to carry over the argument in full
generality.

Let $\cP$ be a $j$-neighborly $d$-polytope.  Since every $j$-set of $\cP$ forms
a $(j-1)$-face, the argument for Proposition~\ref{prop:even} proves that
if $\cP$ has sufficiently many vertices then it is not $(1,j-1)$-scribable.  In
particular, neighborly polytopes with sufficiently many vertices are not
edge-scribable. This provides the last missing piece (namely $k=1$) for
Theorem~\ref{thm:cyclic}.

We will however prove a slightly stronger result

\begin{proposition}\label{prop:neighborly}
	For $d\geq 4$, a $k$-neighborly $d$-polytope $\cP$ with sufficiently many
	vertices is not $(1,k)$-scribable.
\end{proposition}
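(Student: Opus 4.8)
The plan is to generalize the argument for Proposition~\ref{prop:even}, replacing the role of the $k$-set Lemma (which required even-dimensional cyclic polytopes) with the defining property of $k$-neighborliness. Recall the structure of the proof of Proposition~\ref{prop:even}: one shows that in a $(1,d-1)$-scribed realization, every large enough $k$-set must intersect the sphere (because it contains a facet, which cuts), so the associated caps form a $k$-ply system; meanwhile every edge strongly avoids the sphere, so all the caps are pairwise disjoint, making the intersection graph complete; this contradicts the Sphere Separator Theorem once $n$ is large. The key insight here is that $k$-neighborliness gives us directly what the $k$-set Lemma gave for cyclic polytopes: \emph{every $k$-set of $\cP$ contains a $(k-1)$-face}, namely any $k$ of its vertices, without needing any oriented-matroid rigidity.

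First I would assume, for contradiction, that $\cP$ is a $(1,k)$-scribed realization of a $k$-neighborly $d$-polytope, and use Lemma~\ref{lem:newequivalent} to place it as a bounded realization in $\Euc^d$ with all vertices in $\Euc^d \setminus \theBall$. Next I would observe that since $\cP$ is $k$-neighborly, any $k$ of its vertices span a $(k-1)$-face $F$. Because $\cP$ is $k$-cutting (every $k$-face cuts $\theSphere$), and by Lemma~\ref{lem:property}\ref{it:otherij} every $j$-face with $j \ge k$ also cuts, I need a $k$-face containing $F$: such a face exists as $F$ is a proper face of the $d$-polytope (with $k \le d-1$). Hence $\conv(F)$, and a fortiori the convex hull of any $k$-subset of vertices, meets $\theSphere$. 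By Lemma~\ref{lem:kply}, the spherical caps associated to the vertices of $\cP$ therefore form a $k$-ply system.

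On the other hand, every edge of $\cP$ strongly avoids $\theSphere$ (the $i=1$ hypothesis), so by Corollary~\ref{cor:edges} the caps corresponding to any two vertices have disjoint interiors. Thus the intersection graph of the caps is complete on $n$ vertices. Applying the Sphere Separator Theorem~\ref{prop:separator} to this $k$-ply system of $n$ caps on the $(d-1)$-sphere $\theSphere$, I obtain a separator of size $O(k^{1/(d-1)} n^{(d-2)/(d-1)})$; for $n$ large enough this is smaller than $\tfrac{1}{d+1}n$, so removing it leaves a complete subgraph on more than $\tfrac{d}{d+1}n$ vertices, which cannot be split into two parts each of size at most $\tfrac{d}{d+1}n$. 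This is the desired contradiction, and it pins down the explicit threshold on $n$ in terms of the constant $c_{d-1}$ exactly as in Proposition~\ref{prop:even}.

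The main obstacle, and the point requiring the most care, is the step asserting that every $k$-set's convex hull meets $\theSphere$: I must be careful to distinguish the combinatorial $(k-1)$-face given by $k$-neighborliness from the set-theoretic convex hull of an arbitrary $k$-subset, and to invoke cutting at the right dimension. Since only the $k$-faces are assumed to cut, I rely on the monotonicity in Lemma~\ref{lem:property}\ref{it:otherij} to guarantee that the $(k-1)$-face $F$ sits inside a cutting $k$-face, so that $\conv(F) \subseteq$ that cutting face already intersects $\theSphere$; the $k$-subset's hull contains $\conv(F)$ and hence also meets the sphere, which is exactly the hypothesis needed for Lemma~\ref{lem:kply}. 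Verifying this containment cleanly—and confirming the neighborliness range $k \le \lfloor d/2\rfloor$ is compatible with having a $k$-face above every $(k-1)$-face—is where I expect the delicacy to lie, the rest being a direct transcription of the separator argument.
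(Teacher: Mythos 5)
Your overall skeleton---establish a ply property, note that avoiding edges make the intersection graph of the caps complete, and contradict the Sphere Separator Theorem---is exactly the paper's, but the step where you establish the ply property has a genuine gap. You claim that every $k$-set meets $\theSphere$ because the $(k-1)$-face $F$ it spans (by $k$-neighborliness) lies inside some $k$-face $G$, and $G$ cuts $\theSphere$. This inference runs in the wrong direction: the containment is $F \subseteq G$, and cutting is a hypothesis on $G$, not on $F$. That $\relint(G)\cap\theBall\neq\emptyset$ says nothing about the proper face $F$, which can lie entirely outside $\theBall$ (a triangle can dip into the ball while all three of its edges stay strictly outside). Concretely, take a regular simplex and a ball centered at its barycenter with radius strictly between the distance to the $2$-faces and the distance to the edges: this is a $(1,2)$-scribed realization of a $2$-neighborly polytope in which no $2$-set meets $\theSphere$. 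So your intermediate claim is false as stated, the caps need not form a $k$-ply system, and the contradiction never materializes. What the direct neighborliness argument \emph{does} give is only the weaker conclusion that $\cP$ is not $(1,k-1)$-scribable: if $(k-1)$-faces were cutting, then $k$-sets (being $(k-1)$-faces) would meet the sphere. The paper points this out explicitly before the proposition; the proposition itself, with cutting only at level $k$, is strictly stronger and needs more.

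The missing ingredient is to go one level up and use $(k+1)$-sets, which is what the paper does. A $(k+1)$-subset of vertices is not automatically a face ($k$-neighborliness only covers subsets of size $\le k$), but every $(k+1)$-\emph{set} is a $k$-face: by neighborliness all its proper subsets are faces, so it is either a face or a missing face, and a strictly separable set can never be a missing face, since strict separation forces $\conv(I)\cap\conv(V\setminus I)=\emptyset$ while the hull of a missing face always meets the hull of the complementary vertices (Lemmas~\ref{lem:ksetneighborly} and~\ref{lem:missingface}). Once every $(k+1)$-set is a $k$-face, the cutting hypothesis applies at the right dimension: each such face has its relative interior in $\theBall$ and its vertices outside, hence meets $\theSphere$, so by Lemma~\ref{lem:kply} the caps form a $(k+1)$-ply system. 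From there your separator argument goes through verbatim, with $k+1$ in place of $k$ in the bound. In short: the monotonicity of cutting cannot repair the step; the missing-face lemma is the essential ingredient your proposal lacks.
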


\begin{remark}
	$d+3$ vertices suffice for a $1$-neighborly polytope to be not edge-scribable;
	see~\cite[Exercise~20.12]{pak2010}.
\end{remark}

For a proof, the argument for Proposition~\ref{prop:even} applies almost
directly.  But in place of Lemma~\ref{lem:kset}, we need the following $k$-set
lemma.

\begin{lemma}[$k$-set lemma for neighborly polytopes]\label{lem:ksetneighborly}
	Every $(k+1)$-set of a $k$-neighborly $d$-polytope is a $k$-face.
\end{lemma}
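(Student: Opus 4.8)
The plan is to show that the $k+1$ vertices of a $(k+1)$-set $I$ are exactly the vertices of $\cP$ lying on some supporting hyperplane, so that $\conv(I)$ is a $k$-face. Write $V$ for the vertex set of $\cP$, and recall that a $(k+1)$-set $I$ comes with a hyperplane strictly separating $I$ from $V\setminus I$, so in particular $\conv(I)\cap\conv(V\setminus I)=\emptyset$. I would reduce the lemma to two claims: (i) $I$ is affinely independent, so that $\conv(I)$ is a genuine $k$-simplex, and (ii) the affine hull $\aff(I)$ is disjoint from $\conv(V\setminus I)$. Granting both, $\aff(I)$ is a $k$-flat disjoint from the compact convex set $\conv(V\setminus I)$; projecting $\Euc^d$ along the directions of $\aff(I)$ collapses $\aff(I)$ to a single point lying outside the image of $\conv(V\setminus I)$, and separating that point from the image yields a hyperplane $H'\supseteq\aff(I)$ with $V\setminus I$ strictly on one side. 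Then $H'$ is supporting and $H'\cap\cP=\conv(I)$, so $I$ indexes a $k$-face.

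Both claims should follow from $k$-neighborliness. Since $|I|=k+1$, every $k$-element subset $G_i:=I\setminus\{v_i\}$ is a face of $\cP$, necessarily a $(k-1)$-face; let $H_i=\{\ell_i=c_i\}$ be a supporting hyperplane with $H_i\cap\cP=\conv(G_i)$, so that $\ell_i\le c_i$ on $\cP$ with equality exactly on $\conv(G_i)$. In particular $\ell_i(v_i)<c_i$ and $\ell_i(w)<c_i$ for every $w\in V\setminus I$. Claim (i) is then immediate: $\conv(G_0)$ is a $(k-1)$-simplex, so $v_1,\dots,v_k$ are affinely independent, and $v_0\notin\aff(G_0)\subseteq H_0$ because $\ell_0(v_0)<c_0$.

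The hard part will be claim (ii), and this is where the separating hyperplane of the $(k+1)$-set must enter. Suppose some $p\in\aff(I)\cap\conv(V\setminus I)$. Since $\conv(I)\cap\conv(V\setminus I)=\emptyset$, we have $p\notin\conv(I)$, so in the unique affine representation $p=\sum_{i=0}^{k}\mu_i v_i$ with $\sum_i\mu_i=1$ some coefficient is negative, say $\mu_0<0$. Solving for the remaining terms, the point $q:=(p-\mu_0 v_0)/(1-\mu_0)$ is simultaneously an affine combination of $v_1,\dots,v_k$, hence lies in $\aff(G_0)\subseteq H_0$, and a convex combination $q=\tfrac{1}{1-\mu_0}\,p+\tfrac{-\mu_0}{1-\mu_0}\,v_0$ of $p$ and $v_0$ with both coefficients strictly positive. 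Evaluating $\ell_0$ gives $\ell_0(q)<c_0$, since $\ell_0(v_0)<c_0$ and $\ell_0(p)\le\max_{w\in V\setminus I}\ell_0(w)<c_0$; but $q\in H_0$ forces $\ell_0(q)=c_0$, a contradiction. This proves (ii), and together with the reduction above it proves the lemma. The subtlety worth emphasizing is that separability is used precisely to guarantee $p\notin\conv(I)$, i.e.\ that some $\mu_i$ is negative: neighborliness alone does not suffice, as the equatorial triangle of a bipyramid (whose edges are all faces, yet which is not separable and not a $2$-face) shows.
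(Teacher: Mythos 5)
Your proof is correct, and it takes a more self-contained route than the paper. The paper factors the statement through the notion of a \emph{missing face}: by $k$-neighborliness, every $(k+1)$-set is either a face or a missing face (a non-face all of whose proper subsets are faces), and Lemma~\ref{lem:missingface} then rules out the second possibility by asserting that a missing face $I$ has $\conv(I)\cap\conv(V\setminus I)\neq\emptyset$, whereas a $k$-set has these hulls disjoint by definition. (The paper's two-line proof of Lemma~\ref{lem:missingface} in fact states the two conditions with their roles swapped, an evident typo.) The substantive half of that assertion --- that a vertex set strictly separated from the remaining vertices, all of whose proper subsets are faces, must itself be a face --- is exactly what your claim (ii) together with the final projection-and-separation step establishes, via the supporting hyperplanes $H_i$ of the subfaces $G_i$. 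So what your route buys is a complete justification of the step the paper leaves implicit, plus an explicit treatment of the dimension count (that the face obtained is $k$-dimensional), which the paper passes over in silence; your bipyramid example is precisely a missing face in the paper's terminology, and correctly isolates why separability cannot be dropped.

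One step of yours does need justification: the assertion that $\conv(G_0)$ is ``necessarily a $(k-1)$-face'', i.e.\ that $G_0$ is affinely independent. This is what makes claim (i) work, and hence the conclusion that $\conv(I)$ is a $k$-face rather than a face of lower dimension; a priori a face with $k$ vertices could have dimension less than $k-1$. The fact is true and standard (all faces of a $k$-neighborly polytope with at most $k$ vertices are simplices), and it can be proved in the same spirit as your claim (ii): if $G_0$ were affinely dependent, a Radon partition $A\sqcup B\subseteq G_0$ with $\conv(A)\cap\conv(B)\neq\emptyset$ would contradict the existence of a supporting hyperplane meeting $\cP$ exactly in a face whose vertex set contains $A$ but avoids $B$. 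With that patch the argument is complete. Note also that claim (ii) never really needs uniqueness of the affine representation of $p$ (only that $p\notin\conv(I)$ forces \emph{some} representation in hand to have a negative coefficient), so the affine-independence issue affects only the dimension statement, not the face-ness statement.
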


For a polytope, a set $I$ of vertices is a \defn{missing face} if $I$ is not a
face but every proper subset $I$ is.  For a $k$-neighborly polytope, every
$(k+1)$-set either forms a face or a missing face.  The $k$-set
Lemma~\ref{lem:ksetneighborly} is then a special case of the following more
general lemma.

\begin{lemma}\label{lem:missingface}
	The $k$-sets of a polytope are not missing faces.
\end{lemma}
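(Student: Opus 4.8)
The plan is to show that a missing face $I$ cannot be a $k$-set, that is, it cannot be strictly separated from the remaining vertices by a hyperplane. I will argue by contradiction: suppose $I$ is both a missing face and a $k$-set of a polytope $\cP$.

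First I would recall what each hypothesis gives. Since $I$ is a $k$-set, there is a hyperplane $H$ that strictly separates $I$ from $V \setminus I$, where $V$ is the vertex set of $\cP$. Since $I$ is a missing face, $I$ itself is not a face of $\cP$, yet every proper subset of $I$ is a face. The key tension I want to exploit is that separability by a hyperplane is a very strong convexity condition, essentially asserting that $\conv(I)$ and $\conv(V \setminus I)$ are disjoint, and I would like to derive from this that $\conv(I)$ must in fact be a face — contradicting that $I$ is missing.

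The central step is the following observation. If $H$ strictly separates $I$ from $V\setminus I$, then $\conv(I)$ lies entirely on one open side of $H$ while all other vertices lie on the other open side. I can then translate $H$ towards $I$ until I obtain a supporting hyperplane $H'$ of $\conv(V) = \cP$ whose intersection with $\cP$ is exactly the face spanned by those vertices of $I$ lying on $H'$. More carefully, among all hyperplanes parallel to $H$ and separating the two sets, I push as far as possible toward $\conv(I)$ while keeping $V\setminus I$ strictly on the far side; the extreme such hyperplane supports $\conv(I)$. Because every vertex of $V\setminus I$ is strictly separated, the supporting hyperplane meets $\cP$ only in points of $\conv(I)$, so $F := H'\cap \cP$ is a face contained in $\conv(I)$ and all its vertices lie in $I$. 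The delicate point I need is that $F$ is precisely $\conv(I)$, i.e.\ that \emph{all} of $I$ ends up on the supporting hyperplane, so that $I$ is the vertex set of the face $F$, contradicting that $I$ is not a face.

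The hard part will be precisely this last claim: a naive separating hyperplane need not support all of $I$ simultaneously, so I must choose the separation carefully. I expect the cleanest route is to use the missing-face hypothesis directly: since every proper subset of $I$ is a face, each such subset $I\setminus\{v\}$ spans a genuine face of $\cP$, and I would combine these to force the whole of $I$ onto a common supporting hyperplane. Concretely, take a point $p$ in the relative interior of $\conv(I)$; because $I$ is a $k$-set, $p$ is separated from $V\setminus I$ by a hyperplane, hence $p\notin\conv(V\setminus I)$, which means $p$ lies in the relative interior of a face $F$ of $\cP$ whose vertices are contained in $I$. Since $p$ is in the relative interior of $\conv(I)$, this face $F$ must have $I$ among its vertices, so $\conv(I)\subseteq F$; as $F$ is a face with vertex set inside $I$, we get that $I$ is exactly the vertex set of the face $F$. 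This contradicts the assumption that $I$ is a missing (non-)face, completing the proof.
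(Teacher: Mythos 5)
Your argument has a genuine gap at its decisive step: from ``$p\notin\conv(V\setminus I)$'' you conclude that ``$p$ lies in the relative interior of a face $F$ of $\cP$ whose vertices are contained in $I$.'' This implication is false: the (unique) face of $\cP$ containing $p$ in its relative interior may have vertices in both $I$ and $V\setminus I$, or may be $\cP$ itself. Concretely, let $\cP$ be a regular hexagon with vertices $v_1,\dots,v_6$ in cyclic order and $I=\{v_1,v_2,v_3\}$. Then $I$ is a $3$-set, and the centroid $p$ of $\conv(I)$ satisfies $p\notin\conv(\{v_4,v_5,v_6\})$; yet the only face of $\cP$ containing $p$ in its relative interior is the hexagon itself. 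The structural symptom is that your concrete argument, despite the announced plan, never uses the missing-face hypothesis beyond ``$I$ is not a face'': it uses only the $k$-set property, so if it were correct it would prove that \emph{every} $k$-set spans a face --- and the hexagon refutes exactly that statement. (This $I$ is of course not a missing face, since $\{v_1,v_3\}$ is not a face; that is why the lemma itself survives.)

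The hypothesis that every \emph{proper} subset of $I$ is a face is what must close the gap, and here is one way it enters. Your observation that $I\subseteq\mathrm{vert}(F)$ is fine (since $p\in F$ and $p$ is a positive convex combination of $I$); the problem is excluding an extra vertex $w\in\mathrm{vert}(F)\setminus I$. Write $p$ as a positive convex combination of $\mathrm{vert}(F)$ (possible because $p\in\relint F$) and also of $I$; subtracting and rescaling yields a point $q\in\aff(I)\cap\conv(V\setminus I)$. If $q\in\conv(I)$, this contradicts the separating hyperplane of the $k$-set. If $q\notin\conv(I)$, the segment $[p,q]$ lies in $\aff(I)$ and leaves $\conv(I)$ through a point $x\in\conv(J)$ for some proper subset $J\subsetneq I$; by the missing-face hypothesis $\conv(J)$ is a face of $\cP$, and since it contains the point $x$, interior to a segment with endpoints $p,q\in\cP$, it must contain $p$ --- contradicting $p\in\relint\conv(I)$. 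For comparison, the paper disposes of the lemma in one line by playing the two hypotheses against each other at the level of hulls: a $k$-set has $\conv(I)\cap\conv(V\setminus I)=\emptyset$ (separation), while a missing face forces $\conv(I)\cap\conv(V\setminus I)\neq\emptyset$ (the printed proof states these two conclusions with the roles swapped, evidently a typo). That second fact is precisely the nontrivial point your proof also needs, and its justification is essentially the argument sketched above.
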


\begin{proof}
	Let $V$ be the vertex set of the polytope, and $I$ be a subset of $k$
	vertices.  If $I$ is a $k$-set, then $\conv(I) \cap \conv(V \setminus I) \ne
	\emptyset$.  But if $I$ is a missing face, then $\conv(I) \cap \conv(V
	\setminus I) = \emptyset$.
\end{proof}

Neighborliness is a property that only depends on the $f$-vector.   Hence
Proposition~\ref{prop:neighborly} implies Theorem~\ref{thm:fvector}, which we
restate here:

\begin{reptheorem}{thm:fvector}
	For $d\geq 4$ and any $1 \le k \le d-2$, there are $f$-vectors such that no $d$-polytope
	with those $f$-vectors are $k$-scribable.
\end{reptheorem}

\begin{proof}
For $1 \le k \le \lfloor d/2 \rfloor$, the theorem follows by taking $j=\lfloor
d/2 \rfloor$ in Proposition~\ref{prop:neighborly}.  The remaining cases, i.e.\
$\lceil d/2 \rceil \le k \le d-2$, are obtained by taking the polar. 
\end{proof}

\section{Stamps}\label{sec:stamp}
Polytopes that are not $(0,d-3)$-scribable can be obtained by taking the polar of cyclic polytopes.  Here we present another 
alternative construction
based on projectively prescribed faces.

\begin{lemma}\label{lem:polygon_ellipse}
	For every $d\geq 2$, there is a polytope $\cP$ with no $(0,d-1)$-scribed
	projectively equivalent realization.
\end{lemma}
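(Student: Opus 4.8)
The plan is to isolate a two-dimensional obstruction and then propagate it to every dimension using projectively prescribed faces. First I would reformulate the problem projectively. Passing between the spherical and Euclidean pictures as in Lemma~\ref{lem:newequivalent}, a realization projectively equivalent to $\cP$ is $(0,d-1)$-scribed precisely when the fixed projective type of $\cP$ can be positioned so that every vertex strongly avoids and every facet strongly cuts some oval quadric $Q$ lying in the projective orbit of $\theSphere$ (Definition~\ref{def:cutavoid}). Thus the lemma amounts to exhibiting a projective type of polytope for which \emph{no} such quadric exists. I would record at the outset that, unlike the other $(i,j)$, the pair $(0,d-1)$ is self-polar and does not reduce through facets or vertex-figures via Lemma~\ref{lem:property}(iii) and (iv) (which require $j\le d-2$, respectively $i\ge 1$). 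This is exactly why an induction on $d$ is unavailable and a direct, projectively rigid construction — a stamp — is needed.

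The core is the case $d=2$, and this is where the main difficulty lies. Here being $(0,1)$-scribed means that every vertex is exterior to a conic $C$ while every edge is a genuine secant of $C$; equivalently, at each vertex both incident edge-directions lie strictly inside the tangent cone of $C$ from that vertex. I would first note that $(0,1)$-scribability is an \emph{open but not closed} condition on the moduli of convex polygons: the regular $n$-gon is $(0,1)$-scribed by a concentric circle, yet under deformation the scribing conic can be forced to degenerate or escape to infinity, so the scribable projective types form a proper open subset, and I design $\cP$ to lie in its complement. Concretely I would produce an explicit polygon for which \emph{any} conic having all edges as secants is forced to contain at least one vertex in its interior, so that that vertex cannot avoid $C$; the incompatibility is pinned by the cross-ratios fixed by the prescribed projective type (equivalently, by the mutual position of the tangent cones at the vertices). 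The genuine obstacle is to rule out \emph{all} oval quadrics at once, including the degenerate limits (parabolic and two-sheeted), for which I would show that any candidate scribing quadric would need its convex side unbounded in a direction incompatible with the pinned vertices, contradicting boundedness of the polygon.

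Finally, to pass from $d=2$ to arbitrary $d$ I would invoke the Stamp Theorem of Below~\cite{below2002} and Dobbins~\cite{dobbins2011}. Using projectively prescribed faces, I would build a $d$-polytope $\cP$ carrying, in a distinguished $2$-plane $\pi$, a copy of the bad polygon $\cP_0$ whose projective type is the same in every realization, together with facets whose traces on $\pi$ are exactly the edges of $\cP_0$. For any $(0,d-1)$-scribing quadric $Q$, the restriction $Q\cap\pi$ is a conic; the vertices of $\cP_0$, being vertices of $\cP$, strongly avoid $Q$ and hence are exterior to $Q\cap\pi$, while the prescribed facets, forced to cut $Q$ along $\pi$, make the edges of $\cP_0$ secant to $Q\cap\pi$. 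This reproduces a $(0,1)$-scribing of $\cP_0$ inside $\pi$, contradicting the two-dimensional case. The step demanding the most care is the stamp engineering guaranteeing that facet-cutting in $\RR^d$ localizes to secancy of the prescribed edges within $\pi$ — precisely the kind of projective rigidity the Stamp Theorem is built to deliver.
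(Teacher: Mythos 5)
There is a genuine gap, and it sits exactly at the step you flag as ``demanding the most care'': the localization of facet-cutting to the distinguished $2$-plane cannot be engineered, by stamps or otherwise. If $\cP$ is a $d$-polytope ($d\ge 4$) with a $2$-face $\cP_0$ lying in a plane $\pi$, then a $(0,d-1)$-scribing of $\cP$ imposes the cutting condition only on $(d-1)$-faces. Cutting propagates \emph{upward} in face dimension (this is Lemma~\ref{lem:property}(i)), never downward: a facet through an edge of $\cP_0$ can meet $\theBall$ far away from $\pi$, so the edges of $\cP_0$ (which are $1$-faces of $\cP$, not facets) need not meet $\theBall\cap\pi$ at all. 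The Stamp Theorem is of no help here: it fixes the projective type of a face across all realizations, but says nothing about how the facets of the stamp meet a quadric near that face. This is precisely why the paper's own application of stamps (Proposition~\ref{prop:0d-3}) loses two dimensions — a $(d-2)$-polytope with no $(0,d-3)$-scribed projective realization yields a $d$-polytope that is not $(0,d-3)$-scribable, the cutting index $d-3$ matching the dimension of the facets \emph{of the prescribed face} — and why the existence of $d$-polytopes that are not $(0,d-1)$-scribable is stated in Section~\ref{sec:open} as an open conjecture. If your localization step could be made to work, it would resolve that conjecture, which is a strong sign it cannot be routine. A second, independent gap: your base case $d=2$ is only asserted. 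You describe what the bad polygon should do, but give no construction and no argument ruling out \emph{all} conics; this is the entire content of the lemma in dimension $2$.

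You have also inverted the logical structure of the paper: this lemma is the \emph{input} to the stamp construction, not an application of it, and since the lemma only quantifies over projectively equivalent realizations, projective rigidity is free and no stamp is needed anywhere in its proof. The paper's argument is direct and uniform in $d$: place $N=\binom{d+2}{2}-1$ generic points on $\theSphere$, replace each by a tight cluster of $d$ points of $\theSphere$ forming a small facet, and add one far-away facet near the hyperplane $x_0=3$. Any quadric meeting all the small facets passes through one point in each cluster, and since $N$ generic points determine a unique quadric, continuity forces that quadric to be an ellipsoid contained in $2\theBall$; it therefore misses the far facet, while $(0,d-1)$-scribedness (vertices strictly outside, facets meeting $\theBall$) forces every facet to meet the quadric — a contradiction. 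You may want to compare this pinning-by-clusters idea with your cross-ratio heuristic for $d=2$: it is the concrete mechanism your sketch is missing, and it renders the reduction to $d=2$ unnecessary.
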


\begin{figure}[hptb]
	\includegraphics[width=.6\linewidth]{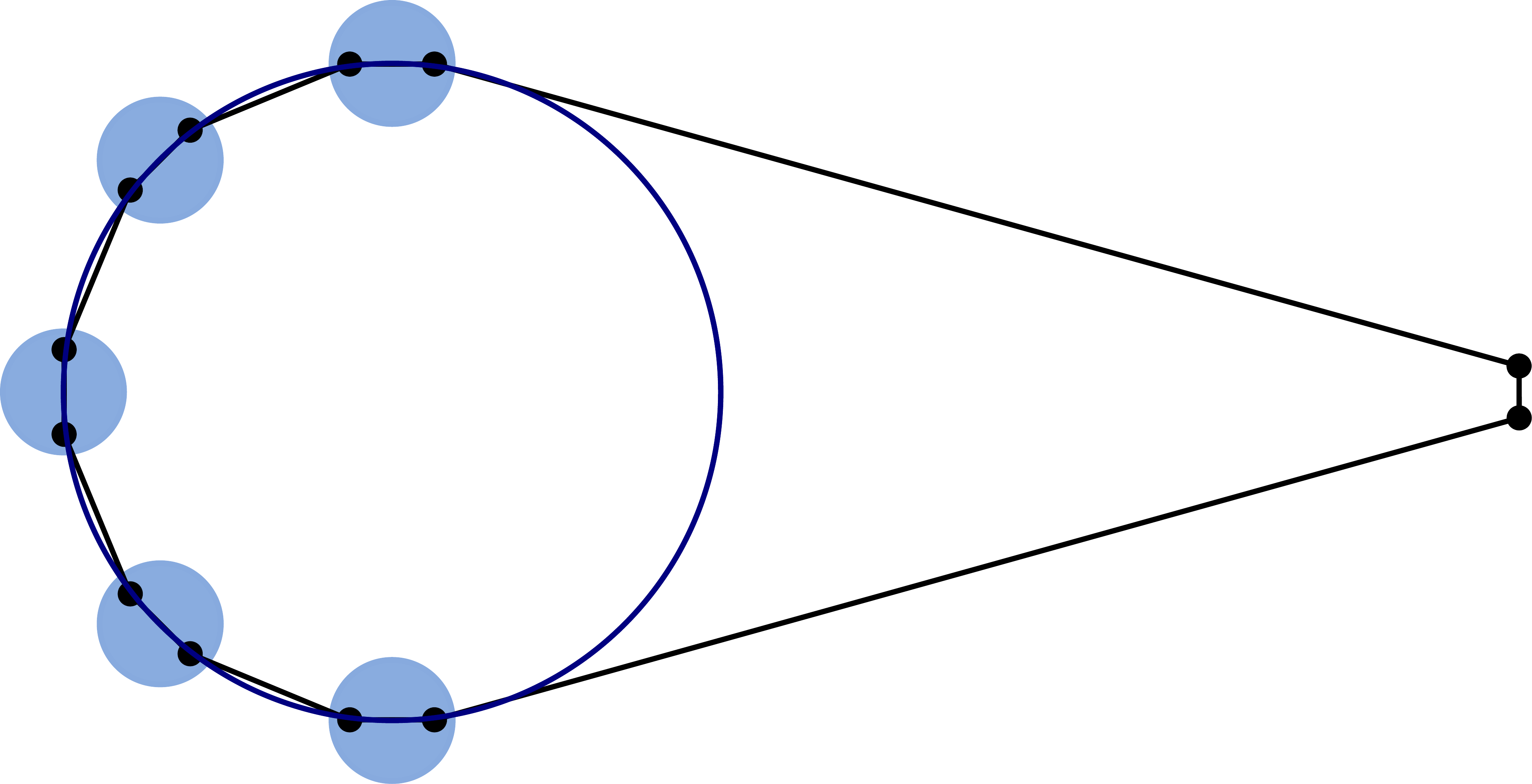}
	\caption{
		The construction of Lemma~\ref{lem:polygon_ellipse}. Any polygon
		projectively equivalent to this polygon is not $(0,1)$-scribed.
	}\label{fig:polygon_ellipse}
\end{figure}

\begin{proof}
	Consider $N=\binom{n+2}{2}-1$ generic points $p_1,\dots,p_N$ lying on the
	$x_0<0$ hemisphere of $\theSphere$.  There is a unique quadric going through
	$\binom{n+2}{2}-1$ generic points, and this dependence is continuous since
	the coefficients of the quadric are the solution of a linear system of
	equations on the points' coordinates.  Hence, there exists an $\varepsilon>0$
	such that for any $q_1,\dots,q_N$ with $q_i \in B_\varepsilon(p_i)$, the
	unique quadric that goes through these $q_i$'s is an ellipsoid contained in
	$2\theBall$.

	Now, consider $dN$ distinct points $p_{i}^j$ for $1\leq i\leq N$ and $1\leq j
	\leq d$ with $p_{i}^j \in \theSphere \cap B_\varepsilon(p_i)$.  Choose $d$
	additional points $p_0^j$, $1 \le j \le d$, on the hyperplane $x_0=3$ in the
	neighborhood $B_\varepsilon(p_0)$ where $p_0=(3,0,\dots,0)$, such that all
	the $d(N+1)$ points are in convex position.  Let $\cP$ be the convex hull of
	all these points.  If $\varepsilon$ is small enough, then for each $0 \le i
	\le N$, the corresponding $p_i^j$'s form a facet $F_i$ of $\cP$.

	For the sake of contradiction, assume a projective transformation $T$ such
	that $T\cP$ is $(0,d-1)$-scribed, then $T^{-1}\theSphere$ is a quadric that
	intersects all the facets of $\cP$.  Since $T^{-1}\theSphere$ contains a
	point $q_i \in B_\varepsilon(p_i)$ for each $1\leq i\geq N$, the quadric is
	contained in $2\theBall$ and hence does not intersect $F_0$.  Hence, such a
	transformation $T$ cannot exist. 

	This construction is sketched in Figure~\ref{fig:polygon_ellipse}.
\end{proof}

We need the following result, found by Below~\cite{below2002} and by Dobbins
\cite{dobbins2011,dobbins2013}.

\begin{proposition}[{\cite[Ch.~5]{below2002}}, see also {\cite[Thm.~4.1]{dobbins2011}} and {\cite[Thm.~1]{dobbins2013}}]\label{prop:Below}
	Let $\cP$ be a $d$-dimensional polytope with algebraic vertex coordinates.
	Then there is a polytope $\widehat{\cP}$ of dimension $d+2$ that contains a
	face $F$ that is projectively equivalent to $\cP$ in every realization of
	$\widehat{\cP}$.
\end{proposition}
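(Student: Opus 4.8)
The plan is to force $F$ to be projectively equivalent to $\cP$ by encoding, as combinatorial incidences in the face lattice of $\widehat{\cP}$, the algebraic relations that pin down the projective class of $\cP$; this is the circle of ideas behind the universality theorems of Mn\"ev and Richter-Gebert. Since the vertices of $\cP$ are algebraic, the condition ``a point configuration in a $d$-flat is projectively equivalent to the vertex set of $\cP$'' is a finite conjunction of polynomial equations in projective coordinates. First I would put this system into a von Staudt normal form, reducing it to a bounded list of elementary projective relations: pure incidences (a point lies on the line through two others) together with projective ``addition'' and ``multiplication'' of cross-ratios. Forcing $F$ to be projectively equivalent to $\cP$ then amounts to forcing all the relations on this list to hold in every realization.

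Next I would realize each elementary relation by a polytopal gadget. The engine is the classical von Staudt construction: given points that represent scalars $a$ and $b$ on a projective line, a fixed pattern of auxiliary points and lines (pure incidences, no metric data) determines points representing $a+b$ and $ab$. The task is to arrange these incidence patterns as subconfigurations of the vertices and faces of $\widehat{\cP}$, so that the \emph{combinatorial type} of $\widehat{\cP}$ alone compels the incidences, and therefore the projective positions of the vertices of $F$. A projectively unique ``frame'' (a simplex together with one extra point) fixes a projective coordinate system against which the gadgets measure coordinates, so that the rigidity of the locked frame propagates to $F$.

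The main obstacle, which is exactly the content of Below's and Dobbins' theorems, is dimension control: a naive concatenation of gadgets makes the dimension grow with the algebraic complexity of $\cP$, whereas the statement demands precisely $d+2$. Resolving this is where I would invoke the ``stamp'' mechanism of \cite{below2002}, or, equivalently, the rank-controlled slack-matrix completion of \cite{dobbins2011,dobbins2013}: the two extra dimensions suffice to host both the projective frame and all the arithmetic gadgets, with the incidences sharing the same two degrees of freedom rather than each demanding its own. The delicate points that remain are verifying that the assembled vertex set is in convex position with the intended face lattice, and that $F$ is \emph{projectively unique} --- not merely combinatorially prescribed --- in every realization of $\widehat{\cP}$.
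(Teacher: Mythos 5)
This statement is not proved in the paper at all: it is quoted as an external result (with the proof deferred to Below's thesis and to Dobbins' papers) and is then used as a black box in Proposition~\ref{prop:0d-3}. So the relevant comparison is between your sketch and those original proofs, not against anything in this paper.

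Your proposal has a genuine gap, and it is the decisive one: circularity. You correctly isolate the two hard points --- (a) compressing all the von Staudt arithmetic gadgets into exactly two extra dimensions, and (b) guaranteeing that the assembled point set is in convex position with the intended face lattice and that $F$ is projectively pinned in \emph{every} realization of $\widehat{\cP}$ --- but you then resolve both by ``invoking the stamp mechanism of \cite{below2002}'' and the constructions of \cite{dobbins2011,dobbins2013}. That mechanism \emph{is} the statement to be proved; citing it inside the proof makes the argument vacuous. The Mn\"ev/Richter-Gebert universality philosophy you describe does motivate the result, but the standard universality constructions produce polytopes whose dimension grows with the algebraic complexity of $\cP$ (precisely the obstacle you name), and bridging from ``dimension grows with complexity'' to ``dimension exactly $d+2$'' is the entire content of Below's Chapter~5 and of Dobbins' theorems: it requires specific polytopal operations (Lawrence-type extensions, connected sums along projectively unique faces, and delicate convex-position verifications), none of which your sketch actually supplies. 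Incidentally, your description of Dobbins' method as ``rank-controlled slack-matrix completion'' does not match his construction, which is a reduction built on antiprisms and projectively unique faces; but this mischaracterization is secondary to the circularity. As written, the proposal is a statement of the strategy of the cited works plus an appeal to their conclusion, not a proof.
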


 Such a polytope $\widehat{\cP}$ is called a \defn{stamp} for $\cP$ in
 \cite{dobbins2011,dobbins2013}. We are now ready to prove the main result of
 this part.
\begin{proposition}\label{prop:0d-3}
    A stamp for a $(d-2)$-polytope with no $(0,d-3)$-scribed projectively
	equivalent realization is not $(0,d-3)$-scribable.

\end{proposition}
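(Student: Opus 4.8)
The plan is to combine the defining property of a stamp with the fact that strong $(0,d-3)$-scribability passes to faces. Concretely, I start from a $(d-2)$-polytope $\cP$ that has no $(0,d-3)$-scribed projectively equivalent realization; such a $\cP$ is produced by Lemma~\ref{lem:polygon_ellipse} in dimension $d-2$, and since the construction there is open I may take its vertices to be algebraic. By Proposition~\ref{prop:Below} the stamp $\widehat{\cP}$ is then a $d$-polytope carrying, in \emph{every} realization, a face $F$ that is projectively equivalent to $\cP$; since $\dim F = d-2$, the face $F$ is a ridge of $\widehat{\cP}$.

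Next I argue by contradiction. Suppose $\widehat{\cP}$ were $(0,d-3)$-scribable and fix a $(0,d-3)$-scribed realization; its distinguished face $F$ is projectively equivalent to $\cP$. The key step is that the scribed structure restricts to $F$. Since $d-3 \le d-2$, Lemma~\ref{lem:property}(iii) shows that every facet of $\widehat{\cP}$ is $(0,d-3)$-scribed in its own span; choosing a facet $G \supseteq F$ and applying Lemma~\ref{lem:property}(iii) once more to the $(d-1)$-polytope $G$ (legitimate because $d-3 \le (d-1)-2$) shows that its facet $F$ is $(0,d-3)$-scribed as well, now with respect to $\theSphere_F := \Span(F) \cap \theSphere$. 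Equivalently, and more transparently: the vertices of $F$ are vertices of $\widehat{\cP}$ and hence avoid $\theSphere_F$, while the facets of $F$ are exactly the $(d-3)$-faces of $\widehat{\cP}$ lying in $F$ and hence cut $\theSphere_F$. Because $d-3 = (d-2)-1$, this says precisely that $F$ is a $(0,d-3)$-scribed $(d-2)$-polytope.

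Finally I would close the loop: $F$ is at once $(0,d-3)$-scribed and projectively equivalent to $\cP$, so it is a $(0,d-3)$-scribed projectively equivalent realization of $\cP$, contradicting the defining property of $\cP$; hence no such $\widehat{\cP}$ can be $(0,d-3)$-scribable. The step demanding the most care is this last identification: one must verify that a realization scribed inside $\Span(F)$ with respect to the section $\theSphere_F$ genuinely counts as a \emph{projective} image of $\cP$ scribed against the standard sphere, so that Lemma~\ref{lem:polygon_ellipse} applies without modification. This is exactly where the spherical/Lorentzian set-up earns its keep: under the identification $\Span(F) \cong \Sph^{d-2}$ the section $\theSphere_F$ is again the standard sphere, and Lemma~\ref{lem:newequivalent} makes spherical and Euclidean strong scribability interchangeable, so composing this identification with the projective equivalence $F \sim \cP$ completes the contradiction.
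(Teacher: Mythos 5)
Your proof is correct and takes essentially the same route as the paper's: contradiction via the stamp property, with the scribed structure inherited by the distinguished ridge $F$ (the paper leaves implicit the double application of Lemma~\ref{lem:property}(iii) and the identification of the section sphere, which you spell out). The extra care you take with $\theSphere_F$ versus the standard sphere matches what the paper's proof of Lemma~\ref{lem:property}(iii) already builds in, so no new idea is needed there.
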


\begin{proof}
	Let $\cP$ be a $(d-2)$-polytope with no $(0,d-3)$-scribed projectively
	equivalent realization, whose existence is guaranteed by
	Lemma~\ref{lem:polygon_ellipse}.  Observe that in the construction of
	Lemma~\ref{lem:polygon_ellipse} we can impose that it has algebraic
	coordinates (and even rational).  Now let $\widehat \cP$ be the stamp polytope from
	Theorem~\ref{prop:Below}.  We claim that $\widehat \cP$ is not
	$(0,d-3)$-scribable.  Otherwise, its $(d-2)$-dimensional face $F$, which is
	projectively equivalent to $\cP$ in every realization of $\widehat \cP$, is
	also $(0,d-3)$-scribed, contradicting our assumption.
\end{proof}

\section{Open problems}\label{sec:open}
Several natural question arise from our results. The most intriguing
is probably the existence of $d$-polytopes that cannot be $(0,d-1)$-scribed.

\begin{conjecture}
	For $d>3$, there are $d$-polytopes that are not $(0,d-1)$-scribable.
\end{conjecture}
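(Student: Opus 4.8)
The plan is to upgrade the projective obstruction already established in Lemma~\ref{lem:polygon_ellipse} into a genuine scribability obstruction. Recall that Lemma~\ref{lem:polygon_ellipse} produces, for each $d$, a $d$-polytope $\cP$ admitting no $(0,d-1)$-scribed \emph{projectively equivalent} realization: every facet other than the distinguished facet $F_0$ traps any would-be sphere inside $2\theBall$, so $F_0$ can never be made to cut $\theSphere$. The only thing separating this from the conjecture is that $(0,d-1)$-scribability permits \emph{any} realization of the combinatorial type, not merely those projectively equivalent to $\cP$. Indeed, if $T\cP$ is $(0,d-1)$-scribed with respect to $\theSphere$, then $\cP$ is scribed with respect to the quadric $T^{-1}\theSphere$; Lemma~\ref{lem:polygon_ellipse} rules this out only for \emph{bounded} quadrics inside $2\theBall$. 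Hence the whole problem reduces to removing the projective freedom in choosing the realization.

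First I would revisit the stamp mechanism of Proposition~\ref{prop:0d-3}, which is exactly a device for killing this freedom. There the Below--Dobbins stamp of Theorem~\ref{prop:Below} rigidifies a face of codimension $2$, so a $(d-2)$-polytope with no $(0,d-3)$-scribed projective realization produces a $d$-polytope that is not $(0,d-3)$-scribable: the facets of the rigid $(d-2)$-dimensional face appear as $(d-3)$-faces of the ambient polytope. To reach $(0,d-1)$ one would need the facets of the rigid face to appear as genuine facets, that is, the rigid face itself would have to be a \emph{facet}, of codimension $1$. The stamp construction does not deliver this, and a single facet of a polytope is far too flexible to be projectively pinned down in isolation. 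Closing this codimension gap is the heart of the matter.

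The cleanest route I would pursue is therefore to make the $d$-polytope of Lemma~\ref{lem:polygon_ellipse} (or a redesigned variant with the same ``trapped facets'' feature) \emph{projectively unique}. If the combinatorial type admits a unique realization up to projective transformation, then every realization is of the form $T\cP$, so $(0,d-1)$-scribability of the type is equivalent to the existence of a $(0,d-1)$-scribed projectively equivalent realization of $\cP$, which Lemma~\ref{lem:polygon_ellipse} already excludes; the conjecture would follow at once. The main obstacle is precisely this rigidification. The polytope of Lemma~\ref{lem:polygon_ellipse} is built from points in \emph{general position}, which is the antithesis of projective uniqueness; making it rigid means replacing genericity by a rigid system of incidences (in the spirit of the stamp and universality constructions) while \emph{simultaneously} preserving the quantitative metric property that every facet but one is confined to a small neighborhood of a quadric. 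Reconciling rigidity of the combinatorial type with this metric requirement is the crux, and is presumably why the authors left the problem open.

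As a hedge, I would keep two metric alternatives in reserve. The first adapts the hyperbolic angle arguments of Lemma~\ref{lem:dihedral} and Proposition~\ref{prop:stack01}: a $(0,d-1)$-scribed polytope is exactly a \emph{hyperideal} polytope in the Klein model, with all vertices hyperideal and all facet hyperplanes genuinely geodesic in $\Hyp^d$, so one would seek a combinatorial type whose forced dihedral-angle (Gauss--Bonnet-- or H\"ohn-type) inequalities are infeasible. The difficulty, and the reason I expect this to fail for the simplest families, is that constraining only the vertices and the facets leaves every intermediate face free, so there is too much slack to force a contradiction; this slack is what allows both cyclic and stacked polytopes to be $(0,d-1)$-scribable. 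The second alternative keeps the $k$-ply and Sphere-Separator machine of Proposition~\ref{prop:even}: facets cutting still yields a small separator through the $k$-set structure, but the density input---previously supplied by ``edges avoid'' via Corollary~\ref{cor:edges}---must now be extracted from the far weaker hypothesis ``vertices avoid,'' which does \emph{not} force the caps to have disjoint interiors. Finding a combinatorial replacement that nonetheless makes the cap intersection graph dense would be the principal obstacle along this route.
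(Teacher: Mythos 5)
There is a genuine gap, and in fact it could not be otherwise: the statement you were asked to prove is a \emph{conjecture} that the paper itself leaves open (``we did not manage to construct examples''), and your proposal is not a proof but a survey of strategies, each of which you yourself concede ends in an unresolved obstacle. Nothing in the proposal ever exhibits a $d$-polytope that is not $(0,d-1)$-scribable. Your central route --- combine the ``trapped facets'' polytope of Lemma~\ref{lem:polygon_ellipse} with projective uniqueness so that every realization is of the form $T\cP$ --- is precisely the strategy the authors record in Section~\ref{sec:open} (attributed there to Adiprasito: use projectively unique polytopes, or polytopes with very constrained realization spaces). The reason it does not close is also recorded there: the only large known family of projectively unique polytopes, constructed by Adiprasito and Ziegler, is essentially inscribable, hence cannot supply the trapped-facets obstruction; and, as you note, the construction in Lemma~\ref{lem:polygon_ellipse} relies on points in general position, which is incompatible with the rigid incidence structures that force projective uniqueness. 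Asserting that this tension is ``the crux'' is a correct diagnosis, not a resolution.

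Your two hedge routes fail for the reasons you already half-identify, so they cannot rescue the argument either. The stamp mechanism of Proposition~\ref{prop:0d-3} rigidifies a face of codimension $2$, so the facets of the rigid face are only $(d-3)$-faces of the ambient polytope; there is no known analogue that pins down a \emph{facet} projectively, and without that the lemma's obstruction says nothing about $(0,d-1)$-scribability of the combinatorial type. The dihedral-angle route founders because a $(0,d-1)$-scribed polytope constrains only vertices and facets, leaving all intermediate faces free --- indeed the paper shows both stacked and cyclic polytopes \emph{are} $(0,d-1)$-scribable, so no angle-sum inequality of the kind used in Lemma~\ref{lem:dihedral} and Proposition~\ref{prop:stack01} can be violated by these natural candidates. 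The separator route needs the cap intersection graph to be dense, which in Proposition~\ref{prop:even} comes from ``edges strongly avoid $\theSphere$'' via Corollary~\ref{cor:edges}; the hypothesis ``vertices avoid'' gives no control whatsoever on cap intersections, and you propose no replacement. In short: the proposal correctly maps the terrain of the open problem, matching the paper's own discussion, but it proves nothing, and the key missing idea --- a combinatorial type whose realization space is constrained enough to retain a metric obstruction like that of Lemma~\ref{lem:polygon_ellipse} --- remains missing.
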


Although we strongly believe that the conjecture is true, we did not manage to construct 
examples.  A promising strategy to find
such a polytope (suggested by Karim A. Adiprasito) would be using projectively unique polytopes, or polytopes with
a very constrained realization space. So far, the largest family of projectively unique
polytopes that we know of are those constructed by Adiprasito and
Ziegler~\cite[Section~A.5.2]{adiprasito2015}. However, they are essentially inscribable,
and hence they do not provide counterexamples directly.

Even for the case of $(1,d-1)$-scribability, our results are not complete.  We
only managed to find polytopes that are not $(1,d-1)$-scribable in even
dimensions: for odd dimensions $d \ge 5$, cyclic polytopes do not provide
examples; see Proposition~\ref{prop:odd}.  
\begin{conjecture}
	For every odd $d \ge 3$, there are $d$-polytopes that are not $(1,d-1)$-scribable.
\end{conjecture}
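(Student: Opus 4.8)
The case $d=3$ is in fact settled in the opposite direction: by the Circle Packing Theorem every $3$-polytope is edge-scribable, hence $(1,1)$-scribed and \emph{a fortiori} $(1,2)$-scribable by Lemma~\ref{lem:property}(i). So the open content lies in odd $d\ge 5$, exactly the range where Proposition~\ref{prop:odd} shows cyclic polytopes are \emph{not} counterexamples. I would pursue two complementary routes.

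The cleanest route is through polarity. By Lemma~\ref{lem:property}(ii) a $d$-polytope is $(1,d-1)$-scribable if and only if its polar is $(0,d-2)$-scribable, so it suffices to build odd-dimensional polytopes that are not $(0,d-2)$-scribable. Since $j=d-2\le d-2$, Lemma~\ref{lem:property}(iii) applies and shows that every facet of a $(0,d-2)$-scribable $d$-polytope is a $(0,(d-1)-1)$-scribable $(d-1)$-polytope. Consequently, if $G$ is any $(d-1)$-polytope admitting no $(0,(d-1)-1)$-scribed realization, then the pyramid $\operatorname{pyr}(G)$ over $G$---which has $G$ as a facet---is not $(0,d-2)$-scribable, and its polar is therefore not $(1,d-1)$-scribable. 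For odd $d$ this reduces the conjecture in dimension $d$ to the \emph{preceding} conjecture (the self-polar top-cutting problem $(0,D-1)$) in the even dimension $D=d-1$.

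The second route transplants the Sphere Separator argument of Proposition~\ref{prop:even} directly to odd dimensions. That proof uses only two features of a family $\cP_n$ with $n\to\infty$: that $\cP_n$ be $2$-neighborly, so the edges-avoiding hypothesis forces the vertex caps to intersect pairwise (Corollary~\ref{cor:edges}) and yields a \emph{complete} intersection graph; and an odd analogue of the $k$-set Lemma~\ref{lem:kset}, namely that for some fixed $k$ every $k$-set of $\cP_n$ contains a facet. With both in hand, a $(1,d-1)$-scribed realization would exhibit the caps as a $k$-ply system (each $k$-set contains a cutting facet, hence meets $\theSphere$ by Lemma~\ref{lem:kply}), and the complete graph would contradict the Sphere Separator Theorem (Proposition~\ref{prop:separator}) for $n$ large. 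Odd cyclic polytopes supply the first feature but flatly violate the second (Remark~\ref{rmk:oddkset}); and the obvious $2$-neighborly substitute, the pyramid over an even cyclic polytope from Corollary~\ref{cor:1d-2}, only reaches $(2,d-1)$. The reason is instructive: its apex-free $k$-sets are $k$-sets of the even cyclic base, so by Lemma~\ref{lem:kset} each contains a \emph{base}-facet---which is a ridge, a $(d-2)$-face of the pyramid, not a facet. Since cutting propagates upward and not downward (see the proof of Lemma~\ref{lem:property}(i)), a merely $(1,d-1)$-scribed realization gives no cutting guarantee on these ridges, so these $k$-sets cannot be certified to meet $\theSphere$ and the $k$-ply conclusion collapses. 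This is precisely where the vertex-figure method of Corollary~\ref{cor:1d-2} stalls at $i=2$.

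The main obstacle is therefore shared by both routes. The separator route demands a genuinely rigid $2$-neighborly family in odd dimensions whose small $k$-sets already contain top-dimensional faces, and I know of no such construction. The polarity route demands an even-dimensional polytope with no $(0,D-1)$-scribed realization; the strongest examples currently available in even dimension $D$ are the polars of even cyclic polytopes, which by Proposition~\ref{prop:even} and Lemma~\ref{lem:property}(ii) are not $(0,D-2)$-scribable, and closing the remaining gap from $(0,D-2)$ to the self-polar $(0,D-1)$ is exactly the preceding open conjecture. I expect this last step---reaching the top cutting dimension, equivalently solving $(0,D-1)$-scribability in even dimensions---to be the hard part.
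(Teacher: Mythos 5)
This statement is not a theorem of the paper: it is one of the open conjectures in Section~\ref{sec:open}, and the paper offers no proof of it --- only the negative information that cyclic polytopes cannot serve as counterexamples in odd dimensions (Proposition~\ref{prop:odd}). So there is no proof of the paper to compare yours against, and your proposal, appropriately, does not claim to close the problem; it is a strategy analysis, and as such it is correct in every assertion it makes.

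Two points in your analysis are worth emphasizing. First, your remark about $d=3$ is right and exposes a slip in the paper's formulation: every $3$-polytope is edge-scribable by the Circle Packing Theorem, hence $(1,1)$-scribed, hence $(1,2)$-scribable by Lemma~\ref{lem:property}(i), so the conjecture as literally stated is false for $d=3$ and should read $d\ge 5$. Second, your polarity route is a genuine, if conditional, reduction: by Lemma~\ref{lem:property}(ii) the conjecture is equivalent to finding odd-dimensional polytopes that are not $(0,d-2)$-scribable, and by Lemma~\ref{lem:property}(iii) the pyramid over any $(d-1)$-polytope that is not $(0,(d-1)-1)$-scribable would be such a polytope; this correctly ties the present conjecture to the paper's first conjecture (non-$(0,D-1)$-scribability) in the even dimension $D=d-1$, a link the paper does not spell out. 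Your diagnosis of why the Sphere Separator route stalls is also accurate: odd-dimensional cyclic polytopes admit realizations with large $k$-sets containing no facet (Remark~\ref{rmk:oddkset}), and for the pyramid over an even-dimensional cyclic polytope the apex-free $k$-sets only contain base facets, which are ridges of the pyramid and hence not forced to cut $\theSphere$ in a $(1,d-1)$-scribed realization --- precisely why Corollary~\ref{cor:1d-2} stops at $(2,d-1)$. The one genuine gap is the conjecture itself: neither route produces a counterexample, as you acknowledge, so the problem remains open, with your reduction showing that solving the even-dimensional $(0,D-1)$ conjecture would settle this one as well.
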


For odd dimensional cyclic polytopes, we know that they are $(1,d-1)$-scribable
(Proposition~\ref{prop:odd}) but not $(1,\floor{d/2})$-scribable
(Proposition~\ref{prop:neighborly}).  We would like to know

\begin{question}
	For odd $d \ge 5$ and $\floor{d/2} < k < d-1$, is every cyclic $d$-polytope
	$(1,k)$-scribable?
\end{question}

We showed that cyclic polytopes with sufficiently many vertices are not
circumscribable. We
conjecture that this holds for any neighborly polytope.

\begin{conjecture}\label{conj:neighnocircum}
	Neighborly polytopes with sufficiently many vertices are not
	circumscribable.
\end{conjecture}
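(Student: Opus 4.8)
The natural plan is to attack the polar statement with the $k$-set and sphere-separator machinery of Section~\ref{sec:neighborly}. By Lemma~\ref{lem:property}\ref{it:polar}, a $d$-polytope is circumscribable exactly when its polar is inscribable, and circumscribability is the strongest form of scribability: if $\cP$ is $(d-1)$-scribed then its polar is $(0,0)$-scribed, so every proper face of $\polar{\cP}$ of positive dimension strongly cuts $\theSphere$. Dualizing back, a circumscribed $\cP$ is $(i,d-1)$-scribed for every $0\le i\le d-1$; in particular every face of dimension at most $d-2$ strongly avoids $\theSphere$, while every facet is tangent and hence cuts. The first consequence I would record is that, by Corollary~\ref{cor:edges}, the edges of a circumscribed $\cP$ avoid $\theSphere$, so the spherical caps associated with the vertices of $\cP$ have pairwise intersecting interiors; their intersection graph is complete.

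With this in hand, the plan mirrors the proof of Proposition~\ref{prop:even}. To reach a contradiction through the Sphere Separator Theorem~\ref{prop:separator}, it suffices to exhibit a constant $k$ (depending only on $d$) such that the vertex caps of any circumscribed realization form a $k$-ply system; by Lemma~\ref{lem:kply} this amounts to showing that every $k$-set of the vertices cuts $\theSphere$. Since a $k$-set that contains a facet must meet $\theSphere$ at the tangency point of that facet, it is enough to know that every $k$-set contains a facet. In even dimensions this is precisely the $k$-set Lemma~\ref{lem:kset} (with $k=3d/2-1$), so the argument of Proposition~\ref{prop:even} goes through verbatim: the complete intersection graph cannot admit the sublinear separator guaranteed by Proposition~\ref{prop:separator} once $n$ is large. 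This already proves the conjecture for even-dimensional neighborly polytopes --- indeed, combining the neighborly $k$-set Lemmas~\ref{lem:ksetneighborly} and~\ref{lem:missingface} with the completeness of the intersection graph handles every even-dimensional neighborly polytope, not just the cyclic ones.

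The hard part is the odd-dimensional case, and this is exactly where the plan stalls. The $k$-set Lemma~\ref{lem:kset} fails in odd dimensions: as Remark~\ref{rmk:oddkset} shows, odd-dimensional cyclic polytopes admit realizations with arbitrarily large $k$-sets that contain no facet, so no purely combinatorial bound forces the vertex caps into a bounded-ply system. Moreover, one cannot route through $(1,d-1)$-scribability as a weaker obstruction, because odd-dimensional cyclic polytopes are genuinely $(1,d-1)$-scribable by Proposition~\ref{prop:odd}; circumscribability is strictly stronger than any $(i,d-1)$-condition. The remaining leverage is the rigidity of the tangency: in a circumscribed realization every facet touches $\theSphere$, and I would try to show that these metric constraints force every large $k$-set to reach $\theSphere$ regardless of the (realization-dependent) oriented matroid. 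Converting this global tangency data into control over the separation structure of the vertices is the essential obstacle, and it is the reason the statement remains conjectural; a solution would also settle the open question of whether every polytopal $f$-vector is attained by an inscribable polytope.
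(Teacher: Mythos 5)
The statement you were asked to prove is Conjecture~\ref{conj:neighnocircum}; the paper does \emph{not} prove it, and explicitly leaves it open. What the paper does prove is the cyclic case: cyclic $d$-polytopes with sufficiently many vertices are not circumscribable, via Proposition~\ref{prop:even} for even $d$ and Corollary~\ref{cor:2d-1} for odd $d$. Your overall framing --- reduce circumscribability to a ply bound for the vertex caps, contradict the Sphere Separator Theorem~\ref{prop:separator}, note that this needs every bounded-size $k$-set to contain a facet, and observe that this facet-containment is only available for even-dimensional cyclic polytopes --- is faithful to the paper's own discussion in Section~\ref{ssec:neighborly}, where the authors say that precisely the lack of an analogue of Lemma~\ref{lem:kset} blocks the general case. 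So, as you acknowledge, your proposal is not a proof of the conjecture, and on that point there is nothing to compare: no proof exists in the paper either.

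However, one concrete claim in your proposal is false: that combining Lemmas~\ref{lem:ksetneighborly} and~\ref{lem:missingface} with the completeness of the intersection graph ``handles every even-dimensional neighborly polytope, not just the cyclic ones''. Those lemmas only say that every $(d/2+1)$-set of an even-dimensional neighborly polytope forms a $d/2$-face. But in a circumscribed realization the only faces guaranteed to meet $\theSphere$ are the facets; as you yourself note, every face of dimension at most $d-2$ lies in a tangent facet hyperplane and is disjoint from $\theBall$. Since $d/2\le d-2$ for $d\ge 4$, a $d/2$-face does not cut the sphere, so the $(d/2+1)$-sets need not intersect $\theSphere$ and Lemma~\ref{lem:kply} yields no ply bound at all. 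The combination you describe is exactly the proof of Proposition~\ref{prop:neighborly}, which rules out $(1,\floor{d/2})$-scribability; but circumscribability only implies $(1,d-1)$-scribability --- Lemma~\ref{lem:property}(i) lets you decrease $i$ and increase $j$, never decrease $j$ --- so non-$(1,\floor{d/2})$-scribability is no obstruction to being circumscribed. What the ply argument genuinely requires is that every $k$-set, for some $k$ depending only on $d$, contains a \emph{facet}; that is the content of Lemma~\ref{lem:kset}, whose proof uses that every realization of an even-dimensional cyclic polytope has its vertices on a curve of order $d$, a rigidity of the oriented matroid that general neighborly polytopes (even-dimensional or not) do not enjoy. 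This is precisely why the statement remains a conjecture. A minor additional point: your inference ``edges avoid $\theSphere$, hence the caps pairwise intersect'' needs care, since Corollary~\ref{cor:edges} requires edges not to \emph{cut} (a tangent edge both avoids and cuts); in a circumscribed realization edges are in fact disjoint from $\theBall$, so this step is repairable.
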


If the conjecture is true, the dual of cyclic polytopes would give the first
examples of $f$-vectors that are not inscribable (see also~\cite{gonska2013}),
completing our Theorem~\ref{thm:fvector}.

On the other hand, every cyclic polytope is inscribable, and so are all neighborly
polytopes of a large family~\cite{gonskapadrol2016}. Computational results of
Moritz Firsching show that every simplicial neighborly $d$-polytope with at most $n$ 
vertices is inscribable when $(d,n)$ is $(4,\leq 11)$, $(5,\leq 10)$, $(6,\leq 11)$ or 
$(7,\leq 11)$~\cite{firsching2015}. 
In \cite{gonskapadrol2016} the following question is posed.

\begin{question}%
 Is every neighborly polytope inscribable?
\end{question}

For stacked polytopes, we do not have results on $(i,j)$-scribability for $j-i
\ge 2$.  Unfortunately, the angle-sum technique that proves
Theorem~\ref{thm:stack01} does not work for dimension $5$ or higher.

\begin{question}
	Given $i,j$ such that $j-i \ge 2$, is every stacked $d$-polytope
	$(i,j)$-scribable?
\end{question}

\bibliography{References}

\end{document}